
\documentclass{daj}

\usepackage{cite}
\usepackage{amssymb, amsmath,  amsthm, enumerate,  mathtools}

\usepackage{color}

\usepackage{graphicx}
\usepackage{tikz}

\usepackage{subcaption} 
\usepackage{caption}

\usepackage[thinlines]{easytable}

\usepackage{bm}
\usepackage{tikz}
\usepackage{tikz-3dplot}

\usetikzlibrary{calc,patterns,angles,quotes}

\usepackage{hyperref}
\usepackage{cite}

\usepackage{etoolbox}
\newtheorem{theorem}{Theorem}[section]
\newtheorem{lemma}[theorem]{Lemma}
\newtheorem{proposition}[theorem]{Proposition}

\newtheorem{definition}[theorem]{Definition}
\newtheorem{claim}[theorem]{Claim}
\newtheorem{remark}[theorem]{Remark}
\newtheorem{example}[theorem]{Example}

\dajAUTHORdetails{
  title = {Joints formed by lines and a $k$-plane, and a discrete estimate of Kakeya type}, 
  author = {Anthony Carbery, Marina Iliopoulou},
  plaintextauthor = {Anthony Carbery, Marina Iliopoulou},
  runningtitle = {Joints and Kakeya},
}   

\dajEDITORdetails{%
   year={2020},
   number={18},
   received={21 November 2019},   
   revised={22 July 2020},    
   published={29 December 2020},  
   doi={10.19086/da.18361},       
}   

\begin{document}

\begin{frontmatter}[classification=text]

\title{Joints formed by lines and a $k$-plane, and a discrete estimate of Kakeya type\footnote{This material is partly based upon work supported by the National Science Foundation under
Grant No. 1440140, while the authors were in residence at the Harmonic Analysis programme at the Mathematical Sciences
Research Institute in Berkeley, California, U.S.A., during the spring of 2017. The authors were further supported by LMS Research in Pairs Grant Ref 41870.}} 

\author[pgom]{Anthony Carbery}
\author[johan]{Marina Iliopoulou}

\begin{abstract}
Let $\mathcal{L}$ be a family of lines and let $\mathcal{P}$ be a family of $k$-planes in $\mathbb{F}^n$ where $\mathbb{F}$ is a field. In our first result we show that the number of joints formed by a $k$-plane in $\mathcal{P}$ together with $(n-k)$ lines in $\mathcal{L}$ is $O_n(|\mathcal{L}||\mathcal{P}|^{1/(n-k)}$). This is the first sharp result for joints involving higher-dimensional affine subspaces, and it holds in the setting of arbitrary fields $\mathbb{F}$. In contrast, for our second result, we work in the three-dimensional Euclidean space $\mathbb{R}^3$, and we establish the Kakeya-type estimate
\begin{equation*}\sum_{x \in J} \left(\sum_{\ell \in \mathcal{L}} \chi_\ell(x)\right)^{3/2} \lesssim |\mathcal{L}|^{3/2}\end{equation*}
where $J$ is the set of joints formed by $\mathcal{L}$; such an estimate fails in the setting of arbitrary fields. This result strengthens the known estimates for joints, including those counting multiplicities. Additionally, our techniques yield significant structural information on quasi-extremisers for this inequality.
\end{abstract}
\end{frontmatter}

\section{Introduction}
Let $\mathbb{F}$ be an arbitrary field and let $\mathcal{L}$ be a finite family of lines in $\mathbb{F}^n$ where $n\geq 3$. A {\bf joint} for $\mathcal{L}$ is a point $x \in \mathbb{F}^n$ at which $n$ lines from $\mathcal{L}$ with linearly independent directions meet. Denoting the set of joints by $J$, it has been proved (see especially \cite{Quilodran, KSS} and also \cite{Guth_Katz_2008, Guth_Katz_2010, Tao, Carbery_Iliopoulou_14, Chazelle_Edelsbrunner_Guibas_Pollack_Seidel_Sharir_Snoeyink_1992, EKS}) that 
$$ |J| \lesssim |\mathcal{L}|^{n/(n-1)}$$
where the implicit constant depends only on the dimension $n$, and in particular is independent of the field $\mathbb{F}$. Simple grid-like examples illustrate the optimality of the exponent $n/(n-1)$.

This result does not measure the extent to which joints can occur in a multiple fashion. For $x \in \mathbb{F}^n$
let 
\begin{equation*}
    N(x)= \#\{(l_1, \dots , l_n) \in \mathcal{L}^n \, : \, l_1, \dots , l_n \mbox{ form a joint at } x\}.
\end{equation*}
Following earlier works by Iliopoulou and by Hablicsek (see \cite{Iliopoulou_13, Iliopoulou_12, Iliopoulou_14, Hablicsek_14}), Zhang \cite{Zhang_16} has proved that 
\begin{equation}\label{counting joints with multiplicities}
    \sum_{x \in \mathbb{F}^n} N(x)^{1/(n-1)} \lesssim |\mathcal{L}|^{n/(n-1)}
\end{equation}
where once again the implicit constant depends only on the dimension.

A variant of this set-up is to consider the situation where we have $n$ families of lines $\mathcal{L}_1, \dots, \mathcal{L}_n$ of possibly very different cardinalities. Let 
$$N'(x) = \#\{(l_1, \dots , l_n) \in \mathcal{L}_1 \times \dots \times  \mathcal{L}_n\, : \, l_1, \dots , l_n \mbox{ form a joint at } x\}.$$
A point $x$ at which $N'(x) \neq 0$ is called a {\bf multijoint} for $\mathcal{L}_1, \dots, \mathcal{L}_n$. Zhang \cite{Zhang_16} has proved that 
$$ \sum_{x \in \mathbb{F}^n} N'(x)^{1/(n-1)} \lesssim |\mathcal{L}_1|^{1/(n-1)}\dots|\mathcal{L}_n|^{1/(n-1)},$$
which is formally stronger than (but is in fact equivalent to) the corresponding estimate when all the families of lines coincide. We refer to this result as the multijoints with multiplicities estimate. Once again, there was previous work of Iliopoulou on this problem (see \cite{Iliopoulou_12, Iliopoulou_13, Iliopoulou_14}) prior to Zhang's result.

Indeed, in the special case of $\mathbb{R}^3$, this multijoints with multiplicities estimate has
been proved via two different approaches, one in \cite{Iliopoulou_13}, where
the topology of $\mathbb{R}$ is exploited, and, as previously mentioned, another in \cite{Zhang_16}.
The goal of this paper is to present two new results, each one of which stems from one of the two approaches which have been hitherto developed. 

\textbf{Multijoints.} The first of these results relates to the approach in \cite{Zhang_16} and it gives a
small, but perhaps promising, step towards counting joints formed by
higher dimensional planes (rather than lines) in
$\mathbb{F}^n$, where $\mathbb{F}$ is an arbitrary
  field. This result is presented in Theorem~\ref{multijoints} and was announced in \cite{OWR}; we believe it to be the first
sharp result for joints and multijoints outside the setting of lines.

We describe the setting for this result. For $ 1 \leq j \leq d$, let $\mathcal{P}_j$ be a set of $k_j$ planes in $\mathbb{F}^n$, where $k_1 + \dots + k_d = n$. 
A {\bf multijoint} for $\{\mathcal{P}_j\}$ is a point of intersection of $d$ planes $P_j$, where $P_j \in \mathcal{P}_j$, such that if $\boldsymbol{\omega}_j$ is a set of 
vectors spanning the linear subspace parallel to $P_j$, then $\bigcup_{j=1}^d \boldsymbol{\omega}_j$ spans $\mathbb{F}^n$. Letting $J$ be the set of multijoints of $\{\mathcal{P}_j\}$, it is conjectured that\footnote{While we were preparing the final version of this paper for publication in Discret. Anal., the next two conjectures were solved by Tidor, Yu and Zhao. See \url{arXiv:2008.01610}.}
$$ |J| \lesssim |\mathcal{P}_1|^{1/(d-1)}\dots|\mathcal{P}_d|^{1/(d-1)},$$
and moreover that 
$$ \sum_{x \in \mathbb{F}^n} N'(x)^{1/(d-1)} \lesssim
|\mathcal{P}_1|^{1/(d-1)}\dots|\mathcal{P}_d|^{1/(d-1)},$$
where now 
$$N'(x) = \#\{(P_1, \dots , P_d) \in \mathcal{P}_1 \times \dots \times  \mathcal{P}_d\, : \, P_1, \dots , P_d \mbox{ form a multijoint at } x\}.$$
It is easy to see that the exponents $1/(d-1)$ are sharp. In Theorem~\ref{multijoints} we establish the first of these conjectures when all but one of the families $\mathcal{P}_j$ consists of a comparable number of lines.
Yang \cite{Yang} deals with the general setting, but an $\epsilon$-loss in the exponents is incurred. As we prepared this paper for publication, we were informed by Yu and Zhao that they have recently also obtained Theorem~\ref{multijoints} by somewhat different methods; see \cite{Yu_Zhao_19}. 

\textbf{Discrete Kakeya and quasi-extremals.} Wolff \cite{Wolff} first popularised the joints problem as a discrete analogue of the famous Kakeya problem and the corresponding Kakeya maximal problem. A strict analogue of the Kakeya maximal problem in the setting of arbitrary fields would involve bounding expressions of the form
$$ \sum_{x \in \mathbb{F}^n} \left(\sum_{l \in \mathcal{L}} \chi_l(x)\right)^{n/(n-1)}$$
by a quantity such as $|\mathcal{L}|^{n/(n-1)}$, under some hypothesis on $\mathcal{L}$ such as its members having distinct directions. In the setting of finite fields this sort of problem has been considered by Ellenberg, Oberlin and Tao \cite{EOT}. One cannot hope to have such an estimate in the case of infinite fields since the previously displayed expression will be infinite as soon as $\mathcal{L}$ is nonempty.
On the other hand, if one modifies the expression to include the sum only over the joints of $\mathcal{L}$, and thus to exclude certain lower-dimensional pathologies, it does indeed make sense to ask whether one has
\begin{equation*}\sum_{x \in J}\left(\sum_{l \in \mathcal{L}} \chi_l(x)\right)^{n/(n-1)} \lesssim |\mathcal{L}|^{n/(n-1)}
\end{equation*}
under the hypothesis that the family $\mathcal{L}$ consists of distinct lines (without imposing the condition that the members of $\mathcal{L}$ have distinct directions).

Note that, for a joint $x$, $\left(\sum_{l \in \mathcal{L}} \chi_l(x)\right)^{n}$ is at least as large as $N(x)$, and it may be significantly larger -- for example in $\mathbb{R}^3$, take $M \gg 1$ distinct coplanar lines through $0$ augmented by a further line through $0$ which is not in the common plane. The proposed estimate is therefore rather strong (stronger than \eqref{counting joints with multiplicities}): in fact, it fails in the setting of finite fields. (Indeed, consider the finite field $\mathbb{F}_p$, and take the family of all lines in $\mathbb{F}_p^2\times\{0\}$ together with one `vertical' line in
$\mathbb{F}_p^3$ passing through each point of $\mathbb{F}_p^2\times\{0\}$. Then we have a family $\mathcal{L}$ of $\sim p^2$ lines in $\mathbb{F}_p^3$ such that for each of $\sim p^2$ joints in $\mathbb{F}_p^3$, $\sum_{l \in \mathcal{L}} \chi_l(x) \sim p$, showing that the proposed estimate cannot hold in this setting.)

Our second new result establishes the proposed estimate in three-dimensional Euclidean space. In particular, further development of the approach to the multijoints with multiplicities problem in \cite{Iliopoulou_13} leads to the proposed estimate
\begin{equation*}
    \sum_{x\in J}\left(\sum_{l\in\mathcal{L}}\chi_l(x)\right)^{3/2}\lesssim |\mathcal{L}|^{3/2}
\end{equation*}
for an arbitrary family $\mathcal{L}$ of distinct lines in $\mathbb{R}^3$, and moreover it
provides a context for revealing the structure of
quasi-extremal configurations in this setting
(see Theorems~\ref{3d_basic} and \ref{3d}). Needless to say, our approach relies upon topological properties of Euclidean space which are not available in the setting of finite fields.

\bigskip

\textbf{Notation.} Before we proceed to state the main results, we establish some notation and terminology. If $A$ and $B$ are nonnegative quantities, we use the expression $A \sim B$ to denote the existence of absolute constants $c_n$ and $C_n$, whose precise values may vary from line to line as appropriate, such that $c_n B \leq A \leq C_n B$. We take $A \lesssim B$ to denote the existence of an absolute constant $C_n$, whose precise value may vary from line to line as appropriate, such that $A \leq C_n B$. We define $\gtrsim$ similarly. For a finite set $X$ we use the notations $\# X$ and $|X|$ interchangeably to denote its cardinality. A {\em definite proportion} of a finite set $X$ is a subset $X' \subseteq X$ such that $\#X' \gtrsim \# X$.

1.1. \textbf{Statement of results.} The first theorem concerns multijoints.
\begin{theorem}\emph{\textbf{(Multijoints estimate)}}\label{multijoints} Let $n\geq 3$ and $k\geq 2$. Let $\mathcal{L}_1,\ldots,\mathcal{L}_{n-k}$ be finite families of lines and $\mathcal{P}$ be a family of $k$-planes in $\mathbb{F}^n$. Let $J$ be the set of multijoints formed by these collections. Then,
\begin{equation*}
    |J|\lesssim L |\mathcal{P}|^{\frac{1}{d-1}},
\end{equation*}
where 
\begin{equation*}
    L:=\max\{|\mathcal{L}_1|,\ldots,|\mathcal{L}_{n-k}|\}
\end{equation*} and 
\begin{equation*}
   d:=n-k+1
\end{equation*}
denotes the total number of collections.
\end{theorem}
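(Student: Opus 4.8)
The plan is to run a Quilodr\'an--Zhang style polynomial argument, anchored on the elementary observation that every multijoint lies on a $k$-plane of $\mathcal{P}$. Two preliminary remarks: first, replacing each $\mathcal{L}_i$ by $\bigcup_i\mathcal{L}_i$, which has at most $(n-k)L=O_n(L)$ elements, only enlarges the set of multijoints, so I may assume all the line families coincide with a single family $\mathcal{L}$ with $|\mathcal{L}|=L$ (write $m:=d-1=n-k$). Secondly, throughout I work with Hasse derivatives in place of ordinary ones, so that everything is insensitive to $\mathrm{char}\,\mathbb{F}$.

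Since the space of polynomials of degree $\le D$ in $n$ variables has dimension $\binom{D+n}{n}$, while vanishing on a fixed $k$-plane imposes at most $\binom{D+k}{k}$ linear conditions, a dimension count produces a nonzero $p$, of degree $D\lesssim_n |\mathcal{P}|^{1/m}$ and of minimal degree among polynomials vanishing on every plane of $\mathcal{P}$; after replacing $p$ by its radical (which does not increase the degree) $Z(p)$ is reduced. Because $J\subseteq\bigcup_{P\in\mathcal{P}}P$, this $p$ vanishes on all of $J$. Now dichotomise the multijoints. If $x\in J$ has a witnessing line $\ell$ with $p|_\ell\not\equiv0$, then $p|_\ell$ is a nonzero univariate polynomial of degree $\le D$, so $\ell$ contains at most $D$ points of $J$; summing over lines, the number of such $x$ is at most $L\cdot D\lesssim_n L|\mathcal{P}|^{1/m}$, which is already the desired estimate. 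It remains to bound the set $J'$ of multijoints all of whose witnessing configurations have every witnessing \emph{line} contained in $Z(p)$.

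For $x\in J'$ the polynomial $p$ vanishes on the entire spanning configuration $P\cup\ell_1\cup\cdots\cup\ell_m$, hence every directional derivative of $p$ vanishes at $x$; thus $\nabla p(x)=0$, i.e.\ $x\in\mathrm{Sing}\,Z(p)$, and moreover the witnessing $k$-plane $P$ is contained in $Z(p)$. So $J'$ is again a set of multijoints of $(\mathcal{L},\mathcal{P})$, now carried by $k$-planes lying in a variety $W_1$ with $\dim W_1\le n-2$. Iterating --- at each stage finding a polynomial of degree $\lesssim_n|\mathcal{P}|^{1/m}$ that vanishes on the currently relevant $k$-planes and peeling off, exactly as above, a type-(a) part of size $\lesssim_n L|\mathcal{P}|^{1/m}$ --- forces the remaining multijoints into a strictly decreasing chain of varieties $W_1\supsetneq W_2\supsetneq\cdots$. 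A $k$-plane cannot lie in a variety of dimension $<k$, so after $O_n(1)$ stages no planes survive, and summing the stage costs gives $|J|=O_n\big(L|\mathcal{P}|^{1/m}\big)$.

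The heart of the matter --- and the step I expect to be genuinely delicate --- is the iteration. The clean mechanism ``cut the ambient variety $W_j$ by a new low-degree polynomial'' requires a polynomial vanishing on the relevant planes but \emph{not} on all of $W_j$, which is not guaranteed by a crude dimension count once $\dim W_j$ is small; when it fails one must fall back on the observation that the spanning configuration at each surviving multijoint is tangent to $W_j$, so that the surviving multijoints lie in $\mathrm{Sing}\,W_j$ and the dimension drops anyway. Reconciling these two mechanisms while (i) keeping the collection of planes an honest family of $k$-planes at every stage, (ii) controlling the degrees so that each stage contributes only $O_n(L|\mathcal{P}|^{1/m})$, and (iii) handling reducible $W_j$ component by component, is where the real work of the proof lies.
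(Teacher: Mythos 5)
Your reduction to a single line family, your construction of $p$ of degree $D\lesssim_n|\mathcal{P}|^{1/m}$ vanishing on every plane of $\mathcal{P}$, and your first dichotomy (a witnessing line off $Z(p)$ meets at most $D$ points of $J$, contributing $\le LD$) are all sound. The gap is the iteration, and it is not merely ``delicate'' --- as set up, it cannot close. To repeat the mechanism at stage $j$ you need a polynomial vanishing on the relevant $k$-planes, which now lie inside a variety $W_j$ of dimension $n-j$, but not vanishing identically on $W_j$; the same parameter count that gave you $D\sim|\mathcal{P}|^{1/m}$ in $\mathbb{F}^n$ now forces degree roughly $\left(|\mathcal{P}|/\deg W_j\right)^{1/(n-j-k)}=\left(|\mathcal{P}|/\deg W_j\right)^{1/(m-j)}$, which already at $j=1$ exceeds your per-stage budget $|\mathcal{P}|^{1/m}$, and which ceases to exist altogether once $\dim W_j=k$. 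Your fallback --- that surviving multijoints lie in $\mathrm{Sing}\,W_j$ --- gives no counting mechanism either: the witnessing planes lie in $W_j$ but not in $\mathrm{Sing}\,W_j$, the lines may lie entirely inside $\mathrm{Sing}\,W_j$, and the degrees of iterated singular loci compound ($D,D^2,D^4,\dots$), again destroying the bound. This is precisely the obstruction that kept the planar multijoints problem open, so the part you defer as ``where the real work lies'' is the entire theorem.

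The paper avoids iteration altogether by introducing two multiplicity parameters. Through each multijoint of $J_P$ it places $B$ auxiliary $(k-1)$-planes inside $P$, and it demands that all Hasse derivatives of $p$ of order up to $A$ in directions transverse to $P$ vanish on these $(k-1)$-planes; the parameter count then requires $|J|A^{n-k}B\lesssim T^{d}$ with $\deg p\lesssim T$. The dichotomy is not ``line in or out of $Z(p)$'' but is taken on the transverse order $a_{k+1}(x)+\cdots+a_n(x)$ of the minimal non-vanishing directional derivative of $p$ at $x$: if it exceeds $A$, some line $\ell_i(x)$ carries a minimal derivative $\mathcal{D}_\ell p$ whose restriction to $\ell$ vanishes to order $\gtrsim A$ at $x$, and B\'ezout gives $|J_1|A\lesssim LT$; if not, $P(x)$ is ``exceptional'' --- some transverse derivative of order $\le A$ is nonzero on $P(x)$ yet vanishes on all $|J_{P}|B$ auxiliary $(k-1)$-planes, forcing $|J_P|B\lesssim T$. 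Optimising $A$ and $B$ yields $|J|\lesssim(L^{n-k}|\mathcal{P}|)^{1/d}|J|^{1/d}$ and hence the theorem. If you want to pursue your route, you would need to replace the chain of varieties by some such multiplicity scheme; the single-order, single-polynomial iteration cannot reach the sharp exponent.
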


As we mentioned above, simple examples demonstrate the sharpness of the exponents in this result.

For our other main result, we first need a definition regarding structure of a set $J$ of points incident to a set $\mathcal{L}$ of lines. We will call this structure \textit{planar}. In particular, planar structure will imply that for each $x\in J$ there exists some special plane through $x$ which carries a definite proportion of the lines in $\mathcal{L}$ passing through $x$. Actually, we shall require something stronger:

\begin{definition}\label{definition: planar structure}{\rm Let $\mathcal{L}$ be a finite family of distinct lines in $\mathbb{R}^3$, and $J$ a set of points incident to lines in $\mathcal{L}$. We say that $J$ \textit{has planar structure} if there exist 
a set $\mathcal{P}$ of planes in $\mathbb{R}^3$,
and a partition of $J$ into pairwise disjoint sets $J_\Pi$, indexed by $\Pi\in\mathcal{P}$, such that
\begin{equation*}
    J_\Pi\subseteq \Pi \mbox{ for all }\Pi,
\end{equation*}
and so that the sets
\begin{equation*}
    \mathcal{L}_\Pi:=\{l\in\mathcal{L}:l\subseteq \Pi\text{ and }l\text{ contains some point in } J_\Pi \}
\end{equation*}
satisfy the following properties:
\begin{itemize}
    \item[P1)] For all $\Pi\in\mathcal{P}$, for all $x \in J_\Pi$,
    \begin{equation*}
       \#\{\text{lines in }\mathcal{L}_{\Pi}\text{ through }x\}\sim \#\{\text{lines in }\mathcal{L}\text{ through }x\}\footnote{The attentive reader will recognise that the definition of planar structure depends on the small implicit constant $c_1$ in this expression: we should therefore, strictly speaking, refer to this as $c_1$-planar structure.};
    \end{equation*}
    \item[P2)] The sets $\mathcal{L}_{\Pi}$, for $\Pi\in\mathcal{P}$, are pairwise disjoint.
   \end{itemize}
}

\end{definition}

\begin{remark}{\rm 
Further implications of planar structure are explored in Section~\ref{ppp}.
For now, observe that when $J$ has planar structure, the disjointness of the families $\mathcal{L}_{\Pi}$ implies that, in order to count incidences between $J$ and $\mathcal{L}$, it suffices to count incidences between $J_\Pi$ and lines in $\mathcal{L}_{\Pi}$ for each $\Pi\in\mathcal{P}$, and to then add the contributions from the different planes $\Pi$. This observation is relevant in particular in the proof of Lemma~\ref{lemma:nearlyplanarest} below.}
\end{remark}

\begin{example}{\rm Consider a Loomis--Whitney grid of joints at lattice points in $\mathbb{R}^3$, with one line parallel to each coordinate axis through each joint. Let $\mathcal{P}$ consist of the horizontal planes, and for $\Pi \in \mathcal{P}$ let $J_\Pi$ be the set of joints on $\Pi$. Then $\mathcal{L}_\Pi$ consists of those lines of $\mathcal{L}$ which lie in $\Pi$, and properties P1) and P2) are clear. 
On the other hand, a bush configuration through a single joint does not in general endow it with a planar structure, since there may be many more lines through the joint than are contained in any plane through it.}
\end{example}

For our purposes, a slightly weaker notion of planar structure is required. Informally, we will say that $J$ has nearly planar structure if there is an appropriate refinement of it which captures most of the incidences with $\mathcal{L}$, and which has planar structure. More precisely:

\begin{definition}
{\rm Let $\mathcal{L}$ be a finite family of distinct lines in $\mathbb{R}^3$, and $J$ a set of points incident to lines in $\mathcal{L}$. We say that $J$ has \emph{nearly planar structure} if for every dyadic $k \in \mathbb{N}$ there exists a subset $J_k'$ of
\begin{equation*}
    J_k:=\{x\in J:\;x\text{ lies in at least }k\text{ and fewer than }2k\text{ lines in }\mathcal{L}\}
\end{equation*}
so that 
\begin{equation*}
    |J_k'|\sim |J_k|\text{ for all }k
\end{equation*}
and
\begin{equation*}
    \bigcup_{k}J_k'\text{ has planar structure}\footnote{By this we mean that $|J_k'|\geq c_2|J_k|$ for all $k$, and that $\bigcup_{k}J_k'$ has $c_1$-planar structure, for specific constants $c_1,c_2$. Therefore, this definition should be thought of as describing $(c_1,c_2)$-nearly planar structure.}.
\end{equation*}
}
\end{definition}

 The reason why nearly planar structure is important to us is two-fold. Firstly, it gives the correct concept for analysing quasi-extremals for the proposed Kakeya inequality. Secondly, under the hypothesis of nearly planar structure, the validity of the Kakeya inequality can be established directly, see the key Lemma~\ref{lemma:nearlyplanarest} below. This, combined with 
the quasi-extremal analysis, then allows us to deduce the desired Kakeya inequality in the general setting.

\begin{theorem}{\emph{\textbf{(Discrete Kakeya-type theorem)}}}\label{3d_basic} For any finite set $\mathcal{L}$ of $L$ distinct lines in $\mathbb{R}^3$, the set $J$ of joints formed by $\mathcal{L}$ satisfies
\begin{equation}\label{hab}
   \sum_{x\in J}  \left(\sum_{l\in\mathcal{L}}\chi_l(x)\right)^{3/2}\lesssim L^{3/2}.
\end{equation}
Moreover, for any $0<\epsilon< 1/2$, the set $\tilde{J}$ of joints in $J$, each of which lies in $\lesssim L^{1/2}$ lines in $\mathcal{L}$, satisfies
\begin{equation*}
    \tilde{J}=J_{{\rm good}}\sqcup J_{{\rm bad}},
\end{equation*}
where $J_{\rm good}$ satisfies the exceptionally good estimate 
\begin{equation*}
    \sum_{x\in J_{{\rm good}}}\left( \sum_{l\in\mathcal{L}}\chi_l(x)\right)^{2-\epsilon}\lesssim_\epsilon L^{3/2}
\end{equation*}
and
\begin{equation*}
    J_{{\rm bad}}\text{ has nearly planar structure}.\footnote{By this we mean that $J_{{\rm bad}}$ has $(c_1, c_2)$-nearly planar structure for $c_1$ and $c_2$ absolute constants whose values depend on the constants in various well-established inequalities; see the discussion below Definition~\ref{def} in  Section~\ref{Section 5} for more details.}
\end{equation*}
\end{theorem}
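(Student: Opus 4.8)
\textbf{Reduction of \eqref{hab} to the decomposition.} Write $n(x)=\sum_{l\in\mathcal L}\chi_l(x)$. Granting the ``moreover'' part, \eqref{hab} follows quickly: split $J=(J\setminus\tilde J)\sqcup J_{\rm good}\sqcup J_{\rm bad}$. On $J_{\rm good}$, since $n(x)\ge 1$ and $2-\epsilon>3/2$ whenever $\epsilon<1/2$, the exceptionally good estimate (applied with, say, $\epsilon=1/4$) gives $\sum_{x\in J_{\rm good}}n(x)^{3/2}\le\sum_{x\in J_{\rm good}}n(x)^{2-\epsilon}\lesssim L^{3/2}$. On $J_{\rm bad}$, nearly planar structure together with the key Lemma~\ref{lemma:nearlyplanarest} gives $\sum_{x\in J_{\rm bad}}n(x)^{3/2}\lesssim L^{3/2}$. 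For the high-multiplicity joints $J\setminus\tilde J$ one applies the \emph{same} dichotomy as below: its nearly-planar joints are again absorbed by Lemma~\ref{lemma:nearlyplanarest}, while its ``dispersed'' joints satisfy $N(x)\gtrsim n(x)^3$ (with $N(x)$ as in the introduction), so $\sum n(x)^{3/2}\lesssim\sum_x N(x)^{1/2}\lesssim L^{3/2}$ by \eqref{counting joints with multiplicities} in the case $n=3$ --- here only the weaker power $3/2$ is needed, which is why \eqref{hab} itself requires no restriction on multiplicity. So everything reduces to producing the decomposition of $\tilde J$.

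\textbf{The planar dichotomy.} For a joint $x$ let $m(x)$ be the maximal number of lines of $\mathcal L$ through $x$ lying in a single plane, and fix such a plane $\pi(x)$. A first ingredient is the pointwise bound $N(x)\gtrsim n(x)^2\bigl(n(x)-m(x)\bigr)$: viewing the directions through $x$ as points of $\mathbb{RP}^2$, any two of them lie in a unique plane through $x$, so summing $\binom{\cdot}{3}$ over those planes bounds the number of coplanar triples through $x$ by $\lesssim m(x)\,n(x)^2$, leaving $\gtrsim n(x)^3-m(x)\,n(x)^2$ triples that span. Fix a small constant $c$. If $m(x)<c\,n(x)$ we call $x$ \emph{dispersed}; then $N(x)\gtrsim n(x)^3$. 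Otherwise $\pi(x)$ carries $\ge c\,n(x)$ of the lines through $x$, and --- following the argument developed from \cite{Iliopoulou_13} --- we endow such joints with a planar structure: put $\mathcal P=\{\pi(x)\}$ (de-duplicated), partition the non-dispersed joints as $J_\Pi=\{x:\pi(x)=\Pi\}$ (breaking ties so that this is a genuine partition), and let $\mathcal L_\Pi=\{l\in\mathcal L:l\subseteq\Pi,\ l\cap J_\Pi\neq\varnothing\}$. Property P1 of Definition~\ref{definition: planar structure} holds by construction; Property P2 will be arranged below, at the cost of passing to a refinement --- whence the weaker notion of nearly planar structure.

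\textbf{Disjointness and the light/heavy split.} Property P2 is not automatic: a line in $\mathcal L_\Pi\cap\mathcal L_{\Pi'}$ must equal $\Pi\cap\Pi'$, so to obtain genuine disjointness one bounds the proportion of joints whose plane-membership is shared in this way, removes or reassigns a controlled fraction of them, and iterates with geometric gain. Once P2 holds, $\sum_{\Pi\in\mathcal P}|\mathcal L_\Pi|\le L$. Split $\mathcal P$ into \emph{heavy} planes ($|\mathcal L_\Pi|>L^{1/2}$, necessarily fewer than $L^{1/2}$ of them) and \emph{light} planes; since P1 forces $n(x)\lesssim|\mathcal L_\Pi|$ for $x\in J_\Pi$, the heavy-plane joints lying in $\tilde J$ inherit the (nearly planar) structure above and are taken to be $J_{\rm bad}$. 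For a light plane $\Pi$ one works in $\Pi\cong\mathbb R^2$: the number of $x\in J_\Pi$ on $\ge k$ lines of $\mathcal L_\Pi$ is $\lesssim|\mathcal L_\Pi|^2 k^{-3}+|\mathcal L_\Pi| k^{-1}$ by the planar Szemer\'edi--Trotter estimate, so, using P1 and summing a dyadic series that converges precisely because $\epsilon<1/2$, $\sum_{x\in J_\Pi}n(x)^{2-\epsilon}\lesssim|\mathcal L_\Pi|^2$; summing over light planes and using $\sum_{\text{light}}|\mathcal L_\Pi|^2\le\bigl(\max_{\text{light}}|\mathcal L_\Pi|\bigr)\sum_\Pi|\mathcal L_\Pi|\le L^{1/2}\cdot L=L^{3/2}$ gives the exceptionally good estimate on the light-plane joints. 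Finally the dispersed joints of $\tilde J$ must also be placed in $J_{\rm good}$; for these $N(x)\gtrsim n(x)^3$ together with $n(x)\lesssim L^{1/2}$ is \emph{not} by itself enough for the power $2-\epsilon$, and this last point requires the sharper input described next.

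\textbf{Main obstacle.} I expect the decisive difficulty to be exactly this sharper control of the dispersed joints: one needs, in $\mathbb R^3$, an estimate for joints in (near-)general position that improves on what \eqref{counting joints with multiplicities} delivers --- morally, that a family of $L$ lines cannot produce too many \emph{high-multiplicity} dispersed joints --- strong enough that, after the loss $n(x)^{1/2-\epsilon}\lesssim L^{1/4-\epsilon/2}$ incurred on $\tilde J$, the total still obeys $L^{3/2}$ for every $\epsilon<1/2$. This is precisely the point at which Theorem~\ref{3d_basic} is genuinely stronger than the multijoints-with-multiplicities estimate, and at which connectedness of $\mathbb R$ (hence the failure of the inequality over finite fields) must enter essentially, via the topological refinement of the method of \cite{Iliopoulou_13}. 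A secondary hurdle is upgrading the assignment $x\mapsto\pi(x)$ to yield Property P2 as a genuine partition while preserving P1, since it is this step that forces the weaker notion of nearly planar structure on which both Lemma~\ref{lemma:nearlyplanarest} and the quasi-extremal description rest.
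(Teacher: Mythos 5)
Your architecture is right in outline --- decompose $\tilde J$ into a piece satisfying the $2-\epsilon$ estimate and a piece with nearly planar structure, feed the latter into Lemma~\ref{lemma:nearlyplanarest}, and handle the remaining high-multiplicity joints separately --- and your single-plane Szemer\'edi--Trotter computation for light planes is essentially one the paper also performs. But the two points you yourself flag as obstacles are not loose ends: they are the entire content of the theorem, and the routes you sketch for them do not close.

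First, the paper never makes a pointwise ``dispersed versus planar'' dichotomy via a richest plane $\pi(x)$, and it never needs a direct estimate on dispersed high-multiplicity joints. Its dichotomy is taken at the level of the dyadic multiplicity classes $J_k$: a level $k\lesssim L^{1/2}$ is \emph{good} if $|J_k|k^{2-\epsilon/2}\lesssim_\epsilon L^{3/2}$ already holds (so the exceptionally good estimate for $J_{\rm good}=\bigcup_{\text{good }k}J_k$ is immediate after summing a convergent $k^{-\epsilon/2}$ series), and \emph{bad} otherwise. For bad $k$ one runs polynomial partitioning with degree $d_k\sim L^2/(|J_k|k^3)$; badness forces the cellular case to fail, so $J_k\subseteq Z_k$, and a refinement using the critical/flat line counts (Propositions~\ref{few critical lines} and~\ref{few flat lines}) together with the classical multijoints bound shows that a definite proportion of $J_k$ consists of flat points lying in planes contained in $Z_k$ with $\sim k$ coplanar lines through each. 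In other words, if a level has too many joints to be good, the polynomial method proves those joints cannot be dispersed: your ``main obstacle'' is dissolved by contraposition, not by a sharper count of joints in general position. As you compute, putting all dispersed joints of $\tilde J$ into $J_{\rm good}$ genuinely fails for the exponent $2-\epsilon$, and the fix is to change the dichotomy, not to strengthen Zhang's estimate.

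Second, property P2. Assigning each joint its richest plane and then ``removing or reassigning a controlled fraction and iterating with geometric gain'' has no mechanism behind it: for an arbitrary pointwise assignment there is no reason the joints lying on lines shared between two assigned planes form a small fraction, and the paper explicitly exhibits (Figure~\ref{allowed}(b), Figure~\ref{fig: obstruction to planar structure}) that unions of planar-structured sets fail P2. The paper obtains P2 precisely because its planes are algebraic: within a single $Z_k$, two distinct planes of $Z_k$ meet in a critical line, which contains no regular points and hence no joints of the refined set $\bar J_k$; and across distinct bad levels $k\neq k'$ the obstruction is killed by the degree-counting argument of Claim~\ref{interaction}, run under a total order on the quantities $k^{\epsilon}d_k$. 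This Step 5 interaction argument is the heart of the proof and has no analogue in your sketch. (A minor further point: the joints of $J\setminus\tilde J$ are handled in the paper unconditionally by Szemer\'edi--Trotter, via $|J_k|\lesssim L/k$ for $k\gtrsim L^{1/2}$, rather than by re-running the planar dichotomy as you propose.)
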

We give a more detailed version of this result, including a more precise structural description of the sets $J_{\rm good}$ and $J_{\rm bad}$, in Section~\ref{Section 5} below. See Theorem~\ref{3d}.

The main thrust of the argument to prove estimate \eqref{hab} 
is to identify sufficient nearly planar structure 
for Lemma~\ref{lemma:nearlyplanarest} to apply. Indeed, we emphasise that the structural statement is the key point in Theorem~\ref{3d_basic}; it is the main new perspective that we offer, and most of the hard work goes into obtaining it.


\begin{remark}
{\rm Theorem~\ref{3d_basic} gives a structural analysis for sets of joints passing through $\lesssim L^{1/2}$ lines of $\mathcal{L}$. If, on the other hand, we consider sets of joints passing through $\gtrsim L^{1/2}$ lines of $\mathcal{L}$, it is not hard to see that they must be arranged in essentially non-interacting bushes. See Remark~\ref{large_l} below for more details. 
}
\end{remark}

\begin{remark}
{\rm The analysis of quasi-extremals implicit in Theorem~\ref{3d_basic} applies in particular in the setting of the joints problem \eqref{counting joints with multiplicities}.}
\end{remark}
\begin{remark}
{\rm We expect the Kakeya estimate \eqref{hab} to continue to hold when the real field is replaced by any field of characteristic zero. We also expect an $n$-dimensional analogue of \eqref{hab} (with exponents $3/2$ replaced by $n/(n-1)$) to hold at least in the case of the real field.}
\end{remark}


1.2. \textbf{Outline of the paper.} In common with many other results on joints, Theorems~\ref{multijoints} and \ref{3d_basic} are proved using the polynomial method. 

In particular, the multijoints Theorem~\ref{multijoints} is proved with the use of a polynomial that vanishes to appropriate order at the multijoints in question, and whose existence follows via a parameter counting argument. To carry this out in the case of arbitrary fields requires some of the machinery of Hasse derivatives of polynomials; to avoid disrupting the exposition, this ancillary material is postponed to Appendix~\ref{Appendix}. In Section~\ref{Section 2} we give an outline of the scheme of the proof and summarise 
the required polynomial calculus in the setting of the real field, where it is somewhat more straightforward. Then in Section~\ref{Section 3}
we complete the proof of Theorem~\ref{multijoints}.

The discrete Kakeya-type Theorem~\ref{3d_basic} will instead be proved using polynomial partitioning, which is described in Section~\ref{Section 4}. A more detailed discussion of the notion of planar structure, along with an extended outline of the proof of Theorem~\ref{3d_basic}, also features in Section~\ref{Section 4}. The details of the proof of Theorem~\ref{3d_basic} are given in Section~\ref{Section 5}.

\section{Preliminaries for the multijoints Theorem~\ref{multijoints}} \label{Section 2}

\subsection{Scheme of the proof} To motivate the discussion in this section, we briefly illustrate the main idea for the proof of the multijoints Theorem~\ref{multijoints} in $\mathbb{R}^n$, when $L\sim |\mathcal{P}|$. In this case (which in hindsight will be simpler from a technical perspective), the desired inequality becomes $|J|\lesssim |\mathcal{P}|^{\frac{d}{d-1}}$,where $J$ is the set of multijoints formed by the $\sim |\mathcal{P}|$ lines and $k$-planes in question, and where $d=n-k+1$. This situation is depicted in Figure~\ref{fig: multijoints}(a) in the special case where $n=5$ and $k=2$.

\begin{figure}[htbp]
\centering 
 
  %
  %
\begin{subfigure}[t]{0.45\textwidth}
\tdplotsetmaincoords{70}{110}
\begin{tikzpicture}[tdplot_main_coords,font=\sffamily, scale=0.7];

\draw[fill=yellow,opacity=0.4] (-4,-4,0) -- (-4,4,0) -- (4,4,0) -- (4,-4,0) -- cycle;
\draw[black, thin] (-4,-4,0) -- (-4,4,0) -- (4,4,0) -- (4,-4,0) -- cycle;

\draw[thick] (1.7,-1.5,-1.3) -- (1.3,-1.5,1.4);
\draw[blue, thick] (1.5,-2,-1.2) -- (1.5,-1,1.2);
\draw[magenta, thick] (1.5,-1.3,-1.2) -- (1.5,-1.7,1.2);
\fill[red] (1.5,-1.5,0) circle (1.1mm);

\draw[magenta, thick] (1.2,2.8,-1.7) -- (0.8,2.8,1.7);
\draw[thick] (1,2.3,-1.3) -- (1,3.3,1.3);
\draw[blue, thick] (1,3,-1.5) -- (1,2.6,1.5);
\fill[red] (1,2.8,0) circle (1.1mm);

\draw[blue, very thick] (0,0,4.7) -- (0,0,-4.7);

\draw[magenta, very thick] (-4,-4,-6) -- (4,4,6);

\draw[fill=yellow,opacity=0.4] (4,0.15,4) -- (4,0.15,6) -- (6,-0.15,6) -- (6,-0.15,4) -- cycle;
\draw[black, thin] (4,0.15,4) -- (4,0.15,6) -- (6,-0.15,6) -- (6,-0.15,4) -- cycle;
\draw[thick] (6.5,0,6.5) -- (-6,0,-6);
\draw[magenta, thick] (5,-1,5) -- (5,1,5);
\draw[blue, very thick] (3,-2,5) -- (7,2,5);
\fill[red] (5,0,5) circle (1.1mm);

\fill[red] (0,0,0) circle (1.1mm);

\draw[fill=yellow,opacity=0.4] (-2,-2.5,-5) -- (-4,-2.5,-5) -- (-4,-3.5,-4) -- (-2,-3.5,-4) -- cycle;
\draw[black, thin] (-2,-2.5,-5) -- (-4,-2.5,-5) -- (-4,-3.5,-4) -- (-2,-3.5,-4) -- cycle;
\draw[thick] (-3,-3,-6) -- (-3,-3,-3);
\draw[blue, thick] (-3,-4,-2.5) -- (-3,-2,-6.5);
\fill[red] (-3,-3,-4.5) circle (1.1mm);

\end{tikzpicture}
\caption{A multijoints configuration.}
\end{subfigure}
\begin{subfigure}[t]{0.45\textwidth}
\tdplotsetmaincoords{70}{110}
\begin{tikzpicture}[tdplot_main_coords,font=\sffamily, scale=0.7]

\draw[fill=yellow,opacity=0.4] (-4,-4,0) -- (-4,4,0) -- (4,4,0) -- (4,-4,0) -- cycle;
\draw[black, thin] (-4,-4,0) -- (-4,4,0) -- (4,4,0) -- (4,-4,0) -- cycle;

\node[anchor=south west,align=center] (line) at (3,4,-1) {$P\in\mathcal{P}$};
;

\fill[red] (1.5,-1.5,0) circle (1.1mm);
\draw[red, thick] (3,0,0) -- (0,-3,0);

\fill[red] (1,2.8,0) circle (1.1mm);
\draw[red, thick] (2,3.8,0) -- (-1,0.8,0);

\draw[blue, very thick] (0,0,4.7) -- (0,0,-4.7);

\node[color=blue, anchor=south west,align=center] (line) at (3,1.2,5.5) {$\ell_2$};
;

\draw[magenta, very thick] (-4,-4,-6) -- (4,4,6);

\node[color=magenta, anchor=south west,align=center] (line) at (3,3.8,5) {$\ell_3$};
;

\draw[thick] (6.5,0,6.5) -- (-6,0,-6);
\node[anchor=south west,align=center] (line) at (3,-1.2,5) {$\ell_1$};
\fill[red] (5,0,5) circle (1.1mm);
\node[anchor=south west,align=center] (line) at (0,0.2,-0.2) {$x$};

\fill[red] (0,0,0) circle (1.1mm);
\draw[red, thick] (2,2,0) -- (-2,-2,0);

\fill[red] (-3,-3,-4.5) circle (1.1mm);
\end{tikzpicture}
\caption{Core idea: each $x\in J$ is controlled via the vanishing properties of (derivatives of) $p$ on one of the objects $P$, $\ell_1$, $\ell_2$, $\ell_3$ through $x$.}
\end{subfigure}
 \captionsetup{singlelinecheck=off}
\caption[.]{\small{Diagram (a) features a configuration of a set $J$ of multijoints (drawn red) in $\mathbb{R}^5$, formed by sets $\mathcal{L}_1, \mathcal{L}_2, \mathcal{L}_3$ of (respectively black, blue and pink) lines and a set $\mathcal{P}$ of (yellow) 2-dimensional planes. Diagram (b) depicts the proof idea. For each $P\in\mathcal{P}$ and each multijoint on $P$, we draw a distinct red line through the multijoint, lying inside $P$. If we can count the red lines, we can count the multijoints. To that end, we find a low-degree polynomial $p$ vanishing identically on all the red lines (and thus on $J$). Fixing $P\in\mathcal{P}$, either $p_{|_P}\not\equiv 0$ or $p_{|_P}\equiv 0$. The former case is easy: we can control the number of multijoints on $P$, as the number of red lines on $P$ is at most $\deg p$. In the latter case, it transpires that for each multijoint $x\in P$ there exists $\ell\in\mathcal{L}_1\cup\mathcal{L}_2\cup\mathcal{L}_3$ through $x$ such that $\mathcal{D}_\ell\;\! p$ vanishes at $x$ but not identically on $\ell$. Therefore, the number of multijoints arising from such harder cases is at most the total number of roots of the polynomials $\mathcal{D}_\ell\;\! p_{|_\ell}$, over all $\ell\in\mathcal{L}_1\cup\mathcal{L}_2\cup\mathcal{L}_3$.}}\label{fig: multijoints}
\end{figure}
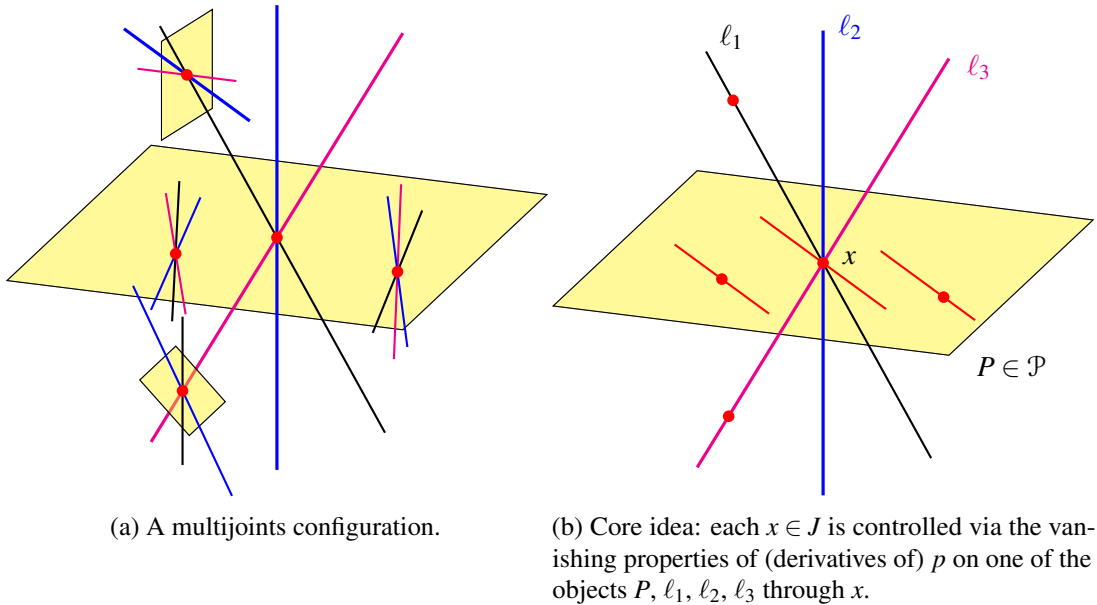

One would ideally wish to show that either each line or each plane contains $\lesssim |J|^{\frac{1}{d}}=|J|^{\frac{1}{n-(k-1)}}$ elements of $J$. This does not necessarily hold; however, we shall show that it is morally the case. 

Indeed, for each $P\in\mathcal{P}$, through each $x\in J\cap P$ we draw a distinct $(k-1)$-dimensional plane $\Pi$ lying inside $P\in\mathcal{P}$, as in Figure~\ref{fig: multijoints}(b). We thus draw $|J|$ such $(k-1)$-planes $\Pi$ in total. Parameter counting shows that there exists a non-zero $p\in\mathbb{R}[x_1,\ldots,x_n]$, with $\deg p\lesssim |J|^{\frac{1}{n-(k-1)}}$, which vanishes identically on each $\Pi$ (and thus also vanishes on $J$). 

Observe that if $p$ does not vanish identically on some $P\in\mathcal{P}$, then we automatically deduce that $P$ contains $\leq \deg p\lesssim |J|^{\frac{1}{n-(k-1)}}$ planes $\Pi$, and thus $\lesssim |J|^{\frac{1}{n-(k-1)}}$ multijoints, as desired.

For any $P\in\mathcal{P}$ on which $p$ vanishes identically, the above argument fails. However, we shall count the multijoints contained in such $k$-planes $P$ using the vanishing properties of appropriate \textit{directional derivatives} of $p$ on the lines in $\mathcal{L}_1\cup\ldots\cup\mathcal{L}_{n-k}$. This dichotomy is inspired by ideas in \cite{Zhang_16}
in which multijoints formed by lines are considered;
the directional derivatives we consider in fact already appear, in disguised form, in \cite{Zhang_16}.

In particular, for any $\ell\in\mathcal{L}_1\cup\ldots\cup\mathcal{L}_{n-k}$ denote by $\mathcal{D}_\ell \;\! p$ a derivative of $p$ of minimal order that does not vanish identically on $\ell$. Fixing $x\in J$ and $P\in\mathcal{P}$, $\ell_j\in\mathcal{L}_j$ through $x$ that together span $\mathbb{R}^n$, we will prove that if $p_{|_P}\equiv 0$, then there exists some $j\in\{1,\ldots,n-k\}$ such that $\mathcal{D}_{\ell_j}p$ vanishes at $x$ (but of course does not vanish identically on $\ell_j$, by definition). Therefore, the multijoints which were not counted by the earlier argument can be counted as roots of at most $|\mathcal{L}_1\cup\ldots\cup\mathcal{L}_n|\sim |\mathcal{P}|$ non-zero one-variable polynomials (the polynomials $\mathcal{D}_\ell\;\!p_{|_\ell}$, over all $\ell\in\mathcal{L}_1\cup\ldots\cup\mathcal{L}_n$), each of degree $\lesssim |J|^{\frac{1}{n-(k-1)}}$.

This perspective naturally motivates the study of directional derivatives of polynomials that vanish on a set of multijoints but do not vanish identically on all the planes/lines forming the multijoints. Subsection~\ref{polcalc} identifies such derivatives in the Euclidean setting. The more tedious generalisations to arbitrary field settings may be found in the Appendix.

\subsection{The zero polynomial} We begin with an elementary observation concerning zero polynomials and vanishing. 

\begin{definition} {\rm Let $R$ be a ring and $n\geq 1$. A polynomial $p\in R[x_1,\ldots,x_n]$ is \emph{the zero polynomial}, denoted by $p=0$, if all the coefficients of $p$ equal $0\in R$.}
\end{definition}

We can also think of $p\in R[x_1,\ldots,x_n]$ as its corresponding evaluation map $p:R^n\rightarrow R$. We often use the term \textit{polynomial mapping} to refer to an evaluation map. If $G \subseteq R^n$, then we will take $p_{|_G}\equiv 0$ to mean that $p(x) = 0$ for all $x \in G$. If $\mathbb{F}$ is a finite field, then there exist non-zero polynomials in $\mathbb{F}[x_1,\ldots,x_n]$ whose evaluation maps vanish identically on $\mathbb{F}^n$. For example, when $\mathbb{F}$ is a finite field of characteristic $q$, the non-zero polynomial $x^q -x$ vanishes everywhere. However, this cannot happen for infinite fields.

\begin{lemma} \label{infinite} Let $\mathbb{F}$ be an infinite field and $n\geq 1$. Then, for any $p\in\mathbb{F}[x_1,\ldots,x_n]$, $p$ is the zero polynomial if and only if $p$ vanishes everywhere on $\mathbb{F}^n$.

\end{lemma}

\begin{proof} It holds that any non-zero polynomial $f\in R[x]$, where $R$ is a commutative integral domain, has at most as many roots as its degree; therefore, if $R$ is infinite, there exists $r\in R$ such that $f(r)\neq 0\in R$. 

The above implies in particular that the statement of the lemma is true when $n=1$. Now, let $n\geq 2$ and let $p\in\mathbb{F}[x_1,\ldots,x_n]$ be non-zero. It follows that $p$ is a non-zero polynomial in $R[x_n]$, where $R=\mathbb{F}[x_1,\ldots,x_{n-1}]$ is an infinite commutative integral domain. Therefore, there exists $g\in R$ such that $p(x_1,\ldots,x_{n-1},g(x_1,\ldots,x_{n-1}))$ is a non-zero element of $R$, i.e. a non-zero polynomial in $\mathbb{F}[x_1,\ldots,x_{n-1}]$. By induction on $n$, it may be assumed that there exists $y\in\mathbb{F}^n$ such that $p(y,g(y))\neq 0\in\mathbb{F}$; that is, $p$ does not vanish at $(y,g(y))\in\mathbb{F}^n$.
\end{proof}

In order to prove Theorem~\ref{multijoints} we will work in the context of algebraically closed fields, which are always infinite. In such settings, the following corollary of Lemma~\ref{infinite} holds.

\begin{lemma} \label{fundamental theorem of algebra} Let $\mathbb{F}$ be an infinite field, $n\geq 1$ and let $p\in\mathbb{F}[x_1,\ldots,x_n]$ be a non-zero polynomial. If $\mathcal{P}$ is a family of distinct $(n-1)$-dimensional planes in $\mathbb{F}^n$ such that  $p_{|_\Pi}\equiv 0$ for every $\Pi\in\mathcal{P}$, then $|\mathcal{P}|\leq \deg p$.
\end{lemma}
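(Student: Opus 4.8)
The plan is to reduce to a one-variable statement and then invoke the fact (recalled in the proof of Lemma~\ref{infinite}) that a non-zero one-variable polynomial over an integral domain has at most as many roots as its degree. Write $d := \deg p$.

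First I would set up coordinates so that each plane $\Pi \in \mathcal{P}$ becomes a level set of a single linear functional. Pick any direction $v \in \mathbb{F}^n$ which is not parallel to any of the (finitely many) hyperplanes $\Pi \in \mathcal{P}$; such a $v$ exists because $\mathbb{F}$ is infinite (the normal directions of the $\Pi$ exclude only finitely many lines through the origin, so we may choose $v$ avoiding all of them, again using that a non-zero polynomial --- here a product of the linear forms pairing $v$ with the normals --- does not vanish identically over an infinite field). After an invertible linear change of variables we may assume $v = e_n$, so that the line through any point in direction $e_n$ is transverse to every $\Pi$. Then each $\Pi \in \mathcal{P}$ meets each such vertical line in exactly one point, and distinct planes $\Pi \neq \Pi'$ meet a given vertical line in distinct points.

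Next I would restrict $p$ to a generic vertical line. Regard $p$ as a polynomial in $x_n$ with coefficients in $R := \mathbb{F}[x_1,\ldots,x_{n-1}]$, which is an infinite integral domain. Since $p \neq 0$, as in Lemma~\ref{infinite} there is a point $y \in \mathbb{F}^{n-1}$ such that the one-variable polynomial $q(t) := p(y, t) \in \mathbb{F}[t]$ is non-zero; note $\deg q \le d$. (To be careful one should choose $y$ avoiding the zero set of the leading coefficient of $p$ in $x_n$, which is again possible since that leading coefficient is a non-zero element of the infinite domain $R$.) Now for each $\Pi \in \mathcal{P}$, the vertical line $\{(y,t) : t \in \mathbb{F}\}$ meets $\Pi$ at a unique point $(y, t_\Pi)$, and since $p_{|_\Pi} \equiv 0$ we get $q(t_\Pi) = 0$. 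The points $t_\Pi$ are pairwise distinct as $\Pi$ ranges over $\mathcal{P}$, because the planes are distinct hyperplanes all transverse to this vertical line. Hence $q$ has at least $|\mathcal{P}|$ distinct roots, so $|\mathcal{P}| \le \deg q \le \deg p$.

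The main obstacle is the genericity: one must simultaneously pick the direction $v$ so that every $\Pi$ is transverse to it (ensuring the intersection points along a vertical line are distinct) \emph{and} pick the base point $y$ so that the restriction $q$ is non-zero of the full controlled degree. Both are handled by the same principle --- a finite collection of non-zero polynomial conditions can be simultaneously satisfied over an infinite field, which is exactly the content of Lemma~\ref{infinite} --- but the bookkeeping of which conditions are needed (normals of the $\Pi$, leading coefficient of $p$) is the one place care is required. Everything else is the elementary degree bound for one-variable polynomials over an integral domain.
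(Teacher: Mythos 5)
Your overall strategy is sound and is essentially the same as the paper's (restrict $p$ to a well-chosen line meeting the planes and count roots of a one-variable polynomial), but there is a genuine gap at the step where you claim the roots are distinct. You assert that because every $\Pi\in\mathcal{P}$ is transverse to the vertical direction $e_n$, ``distinct planes $\Pi\neq\Pi'$ meet a given vertical line in distinct points.'' This is false: two distinct hyperplanes transverse to $e_n$ intersect in an $(n-2)$-dimensional affine subspace, and any vertical line through a point of $\Pi\cap\Pi'$ meets both planes at the \emph{same} point. Concretely, in $\mathbb{F}^2$ take $p(x_1,x_2)=(x_2-x_1)(x_2+x_1)$ and $\mathcal{P}=\{\{x_2=x_1\},\{x_2=-x_1\}\}$: the leading coefficient of $p$ in $x_2$ is $1$, so $y=0$ satisfies all the conditions you impose, yet $q(t)=t^2$ has only one distinct root, so your claim ``$q$ has at least $|\mathcal{P}|$ distinct roots'' fails for that admissible choice of $y$. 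Your closing paragraph shows this is not a mere slip of wording: you list the genericity conditions as ``normals of the $\Pi$, leading coefficient of $p$,'' and state that transversality of $v$ already ensures distinctness of the intersection points, which is precisely the missing condition.

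The gap is easily repaired within your own framework. Writing each $\Pi$ (after the change of variables) as a graph $x_n=\mu_\Pi(x_1,\ldots,x_{n-1})$ with $\mu_\Pi$ affine, distinctness of $\Pi\neq\Pi'$ means $\mu_\Pi-\mu_{\Pi'}$ is a non-zero affine polynomial; so require in addition that $y$ avoid the zero sets of all $\binom{|\mathcal{P}|}{2}$ differences $\mu_\Pi-\mu_{\Pi'}$, together with the zero set of a non-zero coefficient of $p$ viewed as a polynomial in $x_n$. The product of these finitely many non-zero polynomials in $\mathbb{F}[x_1,\ldots,x_{n-1}]$ is non-zero (integral domain), so by Lemma~\ref{infinite} such a $y$ exists, and then $t_\Pi=\mu_\Pi(y)$ are genuinely pairwise distinct roots of $q$, giving $|\mathcal{P}|\leq\deg q\leq\deg p$. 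Note that this ``line meeting all members of $\mathcal{P}$ at distinct points'' is exactly the condition the paper builds into its choice of line from the outset; the paper then runs the argument by contradiction at every point $x$ rather than directly at one generic line, but the core mechanism is the same.
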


\begin{proof}
Since $\mathbb{F}$ is an infinite field (and under the harmless assumption that $\mathcal{P}$ is a finite family), for every $x\in\mathbb{F}^n$ there exists a line $\ell$ in $\mathbb{F}^n$ through $x$ that intersects all the members of $\mathcal{P}$ at distinct points. Let $e(\ell)\in\mathbb{F}^n\setminus\{0\}$ be parallel to $\ell$. Assume that $|\mathcal{P}|> \deg p$; then the polynomial $p_{|_\ell}(t):=p(x+e(\ell)t)\in\mathbb{F}[t]$ has more roots than its degree, and is thus the zero polynomial. It follows in particular that $p(x)=0$. Since $x\in\mathbb{F}^n$ was arbitrary, $p$ vanishes everywhere on $\mathbb{F}^n$ and is hence the zero polynomial by Lemma~\ref{infinite}.
\end{proof}

\subsection{Polynomial calculus over the real field.}\label{polcalc}
We need to develop some of the calculus of polynomials in so far as it relates to multiplicities and restrictions to $k$-planes. We shall need to do so in arbitrary fields. It turns out that one may do this much more directly in the case of $\mathbb{R}$ than in that of an arbitrary field. This is partly because we can then use calculus freely, and partly because in this case we can avoid having to make a careful distinction between a polynomial as a member of $\mathbb{R}[x_1, \dots , x_n]$ and corresponding evaluations of it. For these reasons, we restrict ourselves for this subsection to the case of the real field: in the Appendix we develop the results in the case of arbitrary fields, via the Hasse calculus. Classical calculus is invariant under rigid but not affine motions: with a view to the development of the theory in arbitrary fields, we shall therefore want to focus on notions of the calculus of polynomials which are also affine-invariant, such as degree, vanishing and multiplicity.

Denote by $\mathbb{N}$ the set of nonnegative integers, that is, $\mathbb{N} = \{0,1,2, \dots \}$. Let $n\geq 1$. For any multiindex $a=(a_1,\ldots,a_n)\in\mathbb{N}^n$, let
\begin{equation*}
    |a|:=a_1+\cdots+a_n
\end{equation*}
be the \textit{length} of $a$. 

Let $p\in\mathbb{R}[x_1,\ldots,x_n]$, which we also consider as a polynomial mapping ${p}: \mathbb{R}^n \to \mathbb{R}$. If $p \neq 0$ we define the degree of the mapping $p:\mathbb{R}^n\rightarrow\mathbb{R}$ to be the least
$m$ such that $D^a p = 0$ for all $a$ with $|a| > m$. This notion of degree coincides with that arising by regarding $p$ as a member of $\mathbb{R}[x_1, \dots , x_n]$. The class of all polynomial mappings ${p}: \mathbb{R}^n \to \mathbb{R}$ of degree at most $d$ is a real vector space of dimension $\binom{n+d}{d} \sim_n d^n$. In this subsection, we focus on polynomial mappings, rather than polynomials, in order to be able to use calculus freely.

Let $\boldsymbol{\omega} = \{\omega_1 , \dots, \omega_n\}$ be a basis for $\mathbb{R}^n$. We denote the $(a, \boldsymbol{\omega})$ directional derivative of $p$ by
$$\mathcal{D}^a_{\boldsymbol{\omega}}p = (\boldsymbol{\omega} \cdot \nabla)^{a} p: = (\omega_1 \cdot \nabla)^{a_1} \cdots (\omega_n \cdot \nabla)^{a_n} p.$$
The degree of a non-zero $p$ is equivalently the least $m$ such that $\mathcal{D}^a_{\boldsymbol{\omega}}p=0$ for all $a$ with $|a| > m$, for {\em any} basis $\boldsymbol{\omega}$. If $A: \mathbb{R}^n \to \mathbb{R}^n$ is an affine map then the degree of $p \circ A$ coincides with that of $p$.

The {\bf multiplicity} of a non-zero $p$ at $y_0 \in \mathbb{R}^n$, ${\rm mult} (p, y_0)$, is the largest $m \in \mathbb{N}$ such that $({\mathcal{D}^a_{\boldsymbol{\omega}}p})(y_0) = 0$ for all $a$ with $a_1 + \cdots + a_n < m$. {\em This quantity is independent of the particular choice of basis $\boldsymbol{\omega}$ employed, and is invariant under affine maps of $\mathbb{R}^n$.}

Let $1 \leq k \leq n$ and let $x_0 \in \mathbb{R}^n$. Let $P=P(x_0, \omega_1, \dots , \omega_k)$ be the affine $k$-plane through $x_0$ which is parallel to the subspace spanned by $\{\omega_1 , \dots, \omega_k\}$. The same $P$ can arise as a $P(y_0, \nu_1, \dots , \nu_k)$ in many different ways.

The restriction $p_{|_P}$ of $p$ to ${P}$ is also a polynomial mapping (with $P$ identified with $\mathbb{R}^k$). Thus, for $z_0 \in P$, ${\rm mult}\left(p_{|_P}, z_0 \right)$ is canonically defined (independently of the base point $x_0$ or the particular vectors $\{\omega_1, \dots , \omega_k\}$ whose span, together with $x_0$, determines $P$).

In what follows we use the notation above.
\begin{lemma}[cf. Lemma~\ref{restriction properties}]\label{restriction properties_baby}
{\rm (i)} If for some $y_0 \in P$ 
\begin{equation*}(\omega_1 \cdot \nabla)^{a_1} \cdots (\omega_n \cdot \nabla)^{a_n} p (y_0) \neq 0,
\end{equation*}
then 
\begin{equation*}(\omega_{k+1} \cdot \nabla)^{a_{k+1}} \cdots (\omega_n \cdot \nabla)^{a_n} p_{|_P} \not\equiv 0.
\end{equation*}
\medskip
{\rm (ii)} For all $y \in P$ we have
$${\rm mult} \left((\omega_{k+1} \cdot \nabla)^{a_{k+1}} \cdots (\omega_n \cdot \nabla)^{a_n} p_{|_P},y\right) \geq {\rm mult}(p,y) - (a_{k+1} + \cdots + a_n).$$ 
\end{lemma}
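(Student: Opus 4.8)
The plan rests on two elementary observations: the first-order operators $\omega_1\cdot\nabla,\dots,\omega_n\cdot\nabla$ all commute with one another, and the first $k$ of them are \emph{tangential} to $P$. To make the latter precise I would parametrise $P=P(x_0,\omega_1,\dots,\omega_k)$ by the affine coordinates $t=(t_1,\dots,t_k)\mapsto x_0+t_1\omega_1+\dots+t_k\omega_k$, under which $p_{|_P}$ becomes a genuine polynomial mapping on $\mathbb{R}^k$; the chain rule then gives, for any polynomial mapping $f$ on $\mathbb{R}^n$ and any $1\le j\le k$, that $(\omega_j\cdot\nabla f)_{|_P}=\partial_{t_j}(f_{|_P})$, and hence by iteration
\[
\bigl((\omega_1\cdot\nabla)^{a_1}\cdots(\omega_k\cdot\nabla)^{a_k}f\bigr)_{|_P}=\partial_{t_1}^{a_1}\cdots\partial_{t_k}^{a_k}(f_{|_P}).
\]
Throughout I read the expression $(\omega_{k+1}\cdot\nabla)^{a_{k+1}}\cdots(\omega_n\cdot\nabla)^{a_n}p_{|_P}$ as $q_{|_P}$, where $q:=(\omega_{k+1}\cdot\nabla)^{a_{k+1}}\cdots(\omega_n\cdot\nabla)^{a_n}p$: the non-tangential operators cannot be applied to a function living on $P$, so this is the only sensible reading.

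For part (i), I would use commutativity to regroup $(\omega_1\cdot\nabla)^{a_1}\cdots(\omega_n\cdot\nabla)^{a_n}p=(\omega_1\cdot\nabla)^{a_1}\cdots(\omega_k\cdot\nabla)^{a_k}q$, so that the displayed identity with $f=q$ gives $\partial_{t_1}^{a_1}\cdots\partial_{t_k}^{a_k}(q_{|_P})=\bigl((\omega_1\cdot\nabla)^{a_1}\cdots(\omega_n\cdot\nabla)^{a_n}p\bigr)_{|_P}$; by hypothesis the right-hand side is nonzero at $y_0\in P$, so $q_{|_P}$ is not the zero polynomial, which is exactly the claim.

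For part (ii) I would argue in two steps. First, for the constant-coefficient operator $D:=(\omega_{k+1}\cdot\nabla)^{a_{k+1}}\cdots(\omega_n\cdot\nabla)^{a_n}$ of order $s:=a_{k+1}+\dots+a_n$ and any $y\in\mathbb{R}^n$, every $\boldsymbol{\omega}$-derivative of $Dp$ of order $<\mathrm{mult}(p,y)-s$ is a $\boldsymbol{\omega}$-derivative of $p$ of order $<\mathrm{mult}(p,y)$, hence vanishes at $y$; thus $\mathrm{mult}(q,y)\ge\mathrm{mult}(p,y)-s$. Second, restriction to $P$ does not decrease multiplicity: setting $r:=\mathrm{mult}(q,y)$, in particular $(\omega_1\cdot\nabla)^{b_1}\cdots(\omega_k\cdot\nabla)^{b_k}q(y)=0$ whenever $b_1+\dots+b_k<r$, and by the displayed identity these numbers are precisely $\partial_{t_1}^{b_1}\cdots\partial_{t_k}^{b_k}(q_{|_P})(y)$, so since the multiplicity of $q_{|_P}$ at $y$ may be computed in the coordinate basis of $P\cong\mathbb{R}^k$ we obtain $\mathrm{mult}(q_{|_P},y)\ge r$. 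Combining the two steps yields $\mathrm{mult}(q_{|_P},y)\ge\mathrm{mult}(p,y)-s$, as required.

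I do not expect a serious obstacle here; the only thing needing care is the bookkeeping that identifies the intrinsic multiplicity (and degree) of the polynomial $p_{|_P}$ on $P\cong\mathbb{R}^k$ — which is what appears on the left in both (i) and (ii) — with the tangential ambient derivatives $(\omega_1\cdot\nabla)^{b_1}\cdots(\omega_k\cdot\nabla)^{b_k}$ via the displayed chain-rule identity, together with being explicit about the reading of $(\omega_{k+1}\cdot\nabla)^{a_{k+1}}\cdots(\omega_n\cdot\nabla)^{a_n}p_{|_P}$. Once tangentiality and commutativity are recorded, both parts are immediate.
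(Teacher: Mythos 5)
Your proposal is correct and follows essentially the same route as the paper: the key identity in both is that tangential ambient derivatives $(\omega_1\cdot\nabla)^{b_1}\cdots(\omega_k\cdot\nabla)^{b_k}$ applied to $q:=(\omega_{k+1}\cdot\nabla)^{a_{k+1}}\cdots(\omega_n\cdot\nabla)^{a_n}p$ compute the derivatives of $q_{|_P}$, the paper merely phrasing (i) contrapositively and collapsing your two steps in (ii) into the single observation that such a composite is an ambient derivative of $p$ of order $(b_1+\cdots+b_k)+(a_{k+1}+\cdots+a_n)$.
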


\begin{proof} (i) This is clear because $(\omega_1 \cdot \nabla)^{a_1} \cdots (\omega_n \cdot \nabla)^{a_n} p$ is the result of applying the differential operator
$$(\omega_1 \cdot \nabla)^{a_1} \cdots (\omega_k \cdot \nabla)^{a_k}$$
to the function
$$(\omega_{k+1} \cdot \nabla)^{a_{k+1}} \cdots (\omega_n \cdot \nabla)^{a_n} p.$$
If the latter function is zero when restricted to $P$, any directional derivative of it in a direction parallel to $P$ will be zero when evaluated at any point of $P$. (Note that this argument breaks down in the case of arbitrary fields.)

\medskip
(ii)
Continuing, we also have that if $y \in P$,
and if $(a_1',\ldots,a_k')\in\mathbb{N}^k$ satisfies 
\begin{eqnarray*}
      (a_1'+\ldots+a_k')+ (a_{k+1}+\cdots+a_n) < 
       {\rm mult}(p,y),  
\end{eqnarray*}
then
$$(\omega_1 \cdot \nabla)^{a_1'} \cdots (\omega_k \cdot \nabla)^{a_k'} (\omega_{k+1}\cdot\nabla)^{a_{k+1}}\cdots (\omega_n\cdot\nabla)^{a_n}p(y)=0,$$
and so
\begin{equation*}
  (\omega_1 \cdot \nabla)^{a_1'} \cdots (\omega_k \cdot \nabla)^{a_k'} \big[(\omega_{k+1}\cdot\nabla)^{a_{k+1}}\cdots (\omega_n\cdot\nabla)^{a_n}p_{|_{P}}\big](y)=0,
  \end{equation*}
giving (ii). 
\end{proof}

\begin{lemma}[cf. Lemma~\ref{minimal derivative}]\label{minimal derivative_baby} Suppose that $p$ is non-zero.
Let $y_0 \in {P}$, and suppose that $a_1 + \cdots + a_n$ is minimal with respect to
$$(\omega_1 \cdot \nabla)^{a_1} \cdots (\omega_n \cdot \nabla)^{a_n} p (y_0) \neq 0.$$ 
Let $b_{k+1} + \cdots + b_n$ be minimal with respect to
$$(\omega_{k+1} \cdot \nabla)^{b_{k+1}} \cdots (\omega_n \cdot \nabla)^{b_n} p_{|_P} \not\equiv 0.$$
Then
$$ {\rm mult} \left((\omega_{k+1} \cdot \nabla)^{b_{k+1}} \cdots (\omega_n \cdot \nabla)^{b_n} p_{|_P}, y_0\right) \geq a_1 + \cdots + a_k.$$
\end{lemma}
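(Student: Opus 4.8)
The plan is to reduce the statement to a bookkeeping exercise on the lengths of $\boldsymbol{\omega}$-directional derivatives of $p$ at $y_0$. Two ingredients drive it: the observation that the minimality in the definition of the $a_i$ forces $a_1+\cdots+a_n={\rm mult}(p,y_0)$, and the fact --- already used in the proof of Lemma~\ref{restriction properties_baby}(i) --- that directional derivatives in directions parallel to $P$ commute with restriction to $P$ at points of $P$.

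First I would record the consequences of minimality. By the choice of $(a_1,\ldots,a_n)$, every $\boldsymbol{\omega}$-directional derivative of $p$ of length strictly less than $a_1+\cdots+a_n$ vanishes at $y_0$, while one of length $a_1+\cdots+a_n$ does not; hence ${\rm mult}(p,y_0)=a_1+\cdots+a_n$, and more to the point, any $\boldsymbol{\omega}$-directional derivative of $p$ that is non-zero at $y_0$ has length at least $a_1+\cdots+a_n$. Next, Lemma~\ref{restriction properties_baby}(i) applied to the hypothesis $(\omega_1\cdot\nabla)^{a_1}\cdots(\omega_n\cdot\nabla)^{a_n}p(y_0)\neq 0$ shows that $(\omega_{k+1}\cdot\nabla)^{a_{k+1}}\cdots(\omega_n\cdot\nabla)^{a_n}p_{|_P}\not\equiv 0$; so $(a_{k+1},\ldots,a_n)$ is a competitor in the minimisation defining $(b_{k+1},\ldots,b_n)$, and therefore $b_{k+1}+\cdots+b_n\leq a_{k+1}+\cdots+a_n$.

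Now write $g:=(\omega_{k+1}\cdot\nabla)^{b_{k+1}}\cdots(\omega_n\cdot\nabla)^{b_n}p$ and $q:=g_{|_P}$, the non-zero polynomial mapping on $P$ appearing in the statement. Fix any $(c_1,\ldots,c_k)\in\mathbb{N}^k$ with $c_1+\cdots+c_k<a_1+\cdots+a_k$. Since $\omega_1,\ldots,\omega_k$ are parallel to $P$, the derivative $(\omega_1\cdot\nabla)^{c_1}\cdots(\omega_k\cdot\nabla)^{c_k}q$ agrees at $y_0$ with the value at $y_0$ of the $\boldsymbol{\omega}$-directional derivative $(\omega_1\cdot\nabla)^{c_1}\cdots(\omega_k\cdot\nabla)^{c_k}(\omega_{k+1}\cdot\nabla)^{b_{k+1}}\cdots(\omega_n\cdot\nabla)^{b_n}p$ of $p$, whose length is $(c_1+\cdots+c_k)+(b_{k+1}+\cdots+b_n)<(a_1+\cdots+a_k)+(a_{k+1}+\cdots+a_n)={\rm mult}(p,y_0)$. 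By the first step this value is $0$. As $(c_1,\ldots,c_k)$ was arbitrary subject to the bound on its length, every directional derivative of $q$ of length less than $a_1+\cdots+a_k$ vanishes at $y_0$, i.e.\ ${\rm mult}(q,y_0)\geq a_1+\cdots+a_k$, as claimed.

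The substantive point is the commutation of tangential differentiation with restriction; it is the same observation as in the proof of Lemma~\ref{restriction properties_baby}(i), so I expect no genuine obstacle --- only care in matching multiindex lengths. I note also that this route bypasses part (ii) of Lemma~\ref{restriction properties_baby} entirely and is slightly more direct than deducing the claim from it.
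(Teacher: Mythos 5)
Your proof is correct and takes essentially the same route as the paper's: both rest on the two minimality consequences $b_{k+1}+\cdots+b_n\leq a_{k+1}+\cdots+a_n$ (via Lemma~\ref{restriction properties_baby}(i)) and $a_1+\cdots+a_n={\rm mult}(p,y_0)$. The only difference is that you inline the computation underlying Lemma~\ref{restriction properties_baby}(ii) --- tangential differentiation commuting with restriction, plus the vanishing of all derivatives of length below the multiplicity --- rather than citing that part as a black box.
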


\begin{proof}
It follows by Lemma~\ref{restriction properties_baby} that
\begin{equation*}
    (\omega_{k+1}\cdot\nabla)^{a_{k+1}}\cdots(\omega_n\cdot\nabla)^{a_n}p_{|_{P}}\not\equiv 0.
\end{equation*}
The minimality property of $(b_{k+1},\ldots,b_n)$ implies that $b_{k+1}+\cdots +b_n\leq a_{k+1}+\cdots +a_n$. On the other hand, the minimality property of $(a_{k+1},\ldots,a_n)$ means that $a_1+\cdots +a_n={\rm mult}(p,y)$, and so
\begin{eqnarray*}
    \begin{aligned}
       b_{k+1}+\ldots + b_n&\leq (a_1+\ldots+a_n)-(a_1+\cdots+a_k)\\
       &={\rm mult}(p,y_0)-(a_1+\cdots +a_k).
    \end{aligned}
\end{eqnarray*}
Combining this with assertion (ii) of Lemma~\ref{restriction properties_baby}, one deduces that
\begin{eqnarray*}
\begin{aligned}
{\rm mult} \left((\omega_{k+1} \cdot \nabla)^{b_{k+1}} \cdots (\omega_n \cdot \nabla)^{b_n} p_{|_P}, y_0\right)&\geq{\rm mult}(p,y_0)-(b_{k+1}+\cdots +b_n)\\
&\geq a_1+\cdots +a_k,
\end{aligned}
\end{eqnarray*}
as required.
\end{proof}

\begin{lemma}[cf. Lemma~\ref{order invariance of minimal derivative}]\label{order invariance of minimal derivative_baby} Suppose that $p$ is non-zero and that $\nu_{k+1}, \dots , \nu_{n} \, \in \mathbb{R}^n \setminus\{0\}$ are such that the set
$\{ \omega_1 , \dots, \omega_k,$ $\nu_{k+1}, \dots , \nu_{n}\}$ also forms a basis for $\mathbb{R}^n$. Let $b_{k+1} + \cdots + b_n$ be minimal with respect to
$$(\omega_{k+1} \cdot \nabla)^{b_{k+1}} \cdots (\omega_n \cdot \nabla)^{b_n} p_{|_P} \not\equiv 0,$$
and let  $c_{k+1} + \dots + c_n$ be minimal with respect to
$$(\nu_{k+1} \cdot \nabla)^{c_{k+1}} \cdots (\nu_n \cdot \nabla)^{c_n} p_{|_P} \not\equiv 0.$$
Then $ b_{k+1} + \cdots + b_n = c_{k+1} + \dots + c_n.$
\end{lemma}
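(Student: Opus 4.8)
The plan is to show that the quantity $b_{k+1}+\cdots+b_n$ — the minimal total order of a ``transverse'' derivative (in the directions $\omega_{k+1},\dots,\omega_n$) needed to avoid vanishing identically on $P$ — is an intrinsic invariant of the pair $(p_{|_P}, P)$, not depending on which complementary basis $\{\nu_{k+1},\dots,\nu_n\}$ of the subspace transverse to $\mathrm{span}\{\omega_1,\dots,\omega_k\}$ we use. The key observation is that $P$ itself is fixed: both $\{\omega_1,\dots,\omega_k\}$ and the families $\{\omega_{k+1},\dots,\omega_n\}$, $\{\nu_{k+1},\dots,\nu_n\}$ determine the \emph{same} affine $k$-plane $P$, so $p_{|_P}$ is one and the same polynomial mapping on $P\cong\mathbb{R}^k$. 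What changes is only the choice of transverse directions along which we differentiate \emph{before} restricting.

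The main step is to recast $m:=b_{k+1}+\cdots+b_n$ in a manifestly basis-free way. I claim that $m$ equals
$$m \;=\; \min\{\, j\in\mathbb{N} \,:\, \text{there is a direction } \nu\notin\mathrm{span}\{\omega_1,\dots,\omega_k\} \text{ with } (\nu\cdot\nabla)^{j}\cdots \text{(some order-$j$ transverse operator)}\, p_{|_P}\not\equiv 0\},$$
or, more cleanly, that $m = \mathrm{mult}$ of the ``normal jet'' of $p$ along $P$: namely, write $p$ in coordinates adapted to $P$, so that $p(y) = \sum_{\alpha} q_\alpha(y') \,t^{\alpha}$ where $y'$ parametrises $P$ and $t=(t_{k+1},\dots,t_n)$ are coordinates transverse to $P$ determined by $\omega_{k+1},\dots,\omega_n$; then $p_{|_P}=q_0$, and applying $(\omega_{k+1}\cdot\nabla)^{a_{k+1}}\cdots(\omega_n\cdot\nabla)^{a_n}$ and restricting to $P$ picks out (a multiple of) $q_a$. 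Hence $b_{k+1}+\cdots+b_n$ is the least $|a|$ for which $q_a\not\equiv 0$ — i.e. the order of vanishing along $P$ of the Taylor expansion of $p$ in the \emph{transverse} variables, ignoring the $P$-variables. Now changing from $\{\omega_{k+1},\dots,\omega_n\}$ to $\{\nu_{k+1},\dots,\nu_n\}$ is an invertible linear change of the transverse coordinates $t$ (composed with a shear by the $\omega_1,\dots,\omega_k$ directions, which does not affect the restriction to $P$ of a transverse-order-$j$ derivative). Such a linear change sends the lowest-degree transverse homogeneous part $\sum_{|a|=m} q_a(y')\,t^a$ — which is not identically zero as a polynomial in $t$ with $\mathbb{R}[y']$-coefficients — to another nonzero homogeneous polynomial of the \emph{same} degree $m$; it cannot lower or raise the degree of the lowest nonvanishing transverse jet. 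Therefore $c_{k+1}+\cdots+c_n = m = b_{k+1}+\cdots+b_n$.

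Concretely, I would run the argument symmetrically: by Lemma~\ref{restriction properties_baby}(i), if $(\nu_{k+1}\cdot\nabla)^{c_{k+1}}\cdots(\nu_n\cdot\nabla)^{c_n}p_{|_P}\not\equiv 0$ then there is $y_0\in P$ and a multiindex $a$ with $(\omega_1\cdot\nabla)^{a_1}\cdots(\omega_k\cdot\nabla)^{a_k}(\nu_{k+1}\cdot\nabla)^{c_{k+1}}\cdots(\nu_n\cdot\nabla)^{c_n}p(y_0)\neq 0$; expanding each $\nu_i$ in the basis $\{\omega_1,\dots,\omega_n\}$ and using that $(\omega_i\cdot\nabla)$ for $i\le k$ commute with everything and act tangentially, one extracts from the expansion of $(\nu_{k+1}\cdot\nabla)^{c_{k+1}}\cdots(\nu_n\cdot\nabla)^{c_n}$ a term of the form $(\omega_{k+1}\cdot\nabla)^{d_{k+1}}\cdots(\omega_n\cdot\nabla)^{d_n}$ with $d_{k+1}+\cdots+d_n \le c_{k+1}+\cdots+c_n$ whose restriction to $P$ is not identically zero (nonvanishing of the full mixed derivative at $y_0$ forces at least one such transverse term to survive upon restriction). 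Minimality of $b_{k+1}+\cdots+b_n$ then gives $b_{k+1}+\cdots+b_n \le c_{k+1}+\cdots+c_n$, and the reverse inequality follows by symmetry, interchanging the roles of the $\omega$'s and $\nu$'s.

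The main obstacle I anticipate is the bookkeeping in that last extraction step: when we expand the product $(\nu_{k+1}\cdot\nabla)^{c_{k+1}}\cdots(\nu_n\cdot\nabla)^{c_n}$ in terms of the $(\omega_i\cdot\nabla)$, we get a sum of many monomials in the operators $\omega_1\cdot\nabla,\dots,\omega_n\cdot\nabla$, and we need to argue that \emph{some} monomial with transverse-order $\le c_{k+1}+\cdots+c_n$ has nonvanishing restriction to $P$ — this requires being careful that cancellation among these terms cannot kill every transverse-order-$m$ contribution while leaving the full derivative nonzero at $y_0$. The clean way to see this is exactly the adapted-coordinates picture above: restriction to $P$ followed by the purely transverse derivatives is an $\mathbb{R}[y']$-linear map that reads off Taylor coefficients in $t$, the change of transverse frame is a linear (indeed, graded-preserving) automorphism on those coefficients, and a linear automorphism cannot change the minimal degree of a nonzero graded object. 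I would therefore phrase the whole proof via this coordinate description, which converts the operator-expansion headache into the elementary fact that invertible linear maps preserve order of vanishing.
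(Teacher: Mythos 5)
Your proposal is correct and is in substance the same argument as the paper's: expand each $\nu_j$ in the basis $\{\omega_1,\dots,\omega_n\}$, observe that the expansion preserves total order while only trading transverse derivatives for tangential ones, and use minimality together with the fact that tangential derivatives of a function vanishing on $P$ still vanish on $P$ to kill every term of lower transverse order. The paper avoids the cancellation worry you raise by arguing in the vanishing (contrapositive) direction from the outset — if all $\omega$-transverse derivatives of order $<b$ vanish identically on $P$, then \emph{every} term in the expansion of a $\nu$-derivative of order $<b$ vanishes on $P$, so no extraction of a single surviving monomial is needed — which is precisely what your adapted-coordinates, graded-jet picture formalizes.
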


\begin{proof} 
Suppose that for some $b \in \mathbb{N}$ we have
$$(\omega_{k+1} \cdot \nabla)^{\delta_{k+1}} \cdots (\omega_n \cdot \nabla)^{\delta_n} p_{|_P} \equiv 0$$
whenever $\delta_{k+1} + \cdots + \delta_n < b$. Fix $\beta_{k+1}, \dots , \beta_n$ with $\beta_{k+1} + \cdots + \beta_n < b$. It suffices to show that 
$$(\nu_{k+1} \cdot \nabla)^{\beta_{k+1}} \cdots (\nu_n \cdot \nabla)^{\beta_n} p_{|_P} \equiv 0.$$
Each $\nu_j$ is a linear combination of $\omega_r$'s, and multiplying out the expression 
$$(\nu_{k+1} \cdot \nabla)^{\beta_{k+1}} \cdots (\nu_n \cdot \nabla)^{\beta_n}$$
using the binomial theorem leads to a (weighted) sum of expressions of the form
$$ (\omega_1 \cdot \nabla)^{\gamma_1} \cdots (\omega_n \cdot \nabla)^{\gamma_n},$$
where $\gamma_1+\cdots+\gamma_n=\beta_{k+1}+\cdots+\beta_n<b$. Now
$$\prod_{j = k+1}^n(\omega_{j} \cdot \nabla)^{\gamma_{j}} p$$
vanishes on $P$ by hypothesis, and further derivatives of this expression in directions parallel to $P$ will continue to return zero. Summing, we conclude that 
$$(\nu_{k+1} \cdot \nabla)^{\beta_{k+1}} \cdots (\nu_n \cdot \nabla)^{\beta_n} p$$
vanishes on $P$, as required.
\end{proof}


Despite appearances to the contrary, it is not completely obvious how to generalise the above arguments to the case of arbitrary fields. For details of these natural extensions via the Hasse calculus, see the Appendix.

\section{Proof of Theorem~\ref{multijoints}} \label{Section 3}

\textbf{Theorem \ref{multijoints}.} \textit{Let $n\geq 3$ and $k\geq 2$. Let $\mathcal{L}_1,\ldots,\mathcal{L}_{n-k}$ be finite families of lines and $\mathcal{P}$ be a family of $k$-planes in $\mathbb{F}^n$. Let $J$ be the set of multijoints formed by these collections. Then,}
\begin{equation*}
    |J|\lesssim L |\mathcal{P}|^{\frac{1}{d-1}},
\end{equation*}
\textit{where }
\begin{equation*}
    L:=\max\{|\mathcal{L}_1|,\ldots,|\mathcal{L}_{n-k}|\}
\end{equation*} \textit{and }
\begin{equation*}
   d:=n-k+1
\end{equation*}
\textit{denotes the total number of collections.}

\begin{proof} It may be assumed that $\mathbb{F}$ is algebraically closed (and therefore infinite), since the lines and $k$-planes in the collections $\mathcal{L}_1,\ldots,\mathcal{L}_{n-k}$, $\mathcal{P}$ can be naturally extended to lines and $k$-planes in $\overline{\mathbb{F}}^n$ (where $\overline{\mathbb{F}}$ is the algebraic closure of $\mathbb{F}$),  still forming the multijoints in $J$.

For every multijoint $x$, fix lines $l_i(x)\in\mathcal{L}_i$, $i=1,\ldots,n-k$, and a $k$-plane $P(x)\in \mathcal{P}$ that form a multijoint at $x$. We say that $x$ {\em chooses} these lines and $k$-plane.

For every $k$-plane $P \in \mathcal{P}$, let $J_P$ be the set of multijoints that have chosen $P$; it holds that $J_P\subseteq P$. For some $B\in \mathbb{N}$ that will be fixed later, fix $\Pi_P$ to be a family of distinct $(k-1)$-planes contained in ${P}$, with exactly $B$ of them through each element of $J_P$, so that each $(k-1)$-plane in $\Pi_P$ contains exactly one multijoint in $J_P$. In particular,
$$|\Pi_P|=|J_P|B.
$$
Note that the existence of such distinct $(k-1)$-planes contained in $P$ is ensured by the condition $k \geq 2$ and the fact that $\mathbb{F}$ is algebraically closed and therefore infinite.

The goal is to count these $(k-1)$-planes contained in $P$; the above equality will then directly give an estimate on the number of multijoints in $P$. And, indeed, under certain conditions, the number of these $(k-1)$-planes contained in $P$ can be controlled, as they will all lie in the zero set of a relatively low degree polynomial that does not vanish identically on $P$. The existence of such a polynomial will follow from Claim~\ref{polynomial method} below, which uses a standard parameter-counting argument.

More precisely, for some large parameter $T>0$, fix natural numbers
\begin{equation*}
    A\sim \frac{T\cdot L}{{(L^{n-k}\;|\mathcal{P}|)^{1/d}|J|^{1/d}}}
\end{equation*}
and
\begin{equation*}
    B\sim \frac{T\cdot |\mathcal{P}|}{(L^{n-k}\;|\mathcal{P}|)^{1/d}|J|^{1/d}}.
\end{equation*}
For each $P\in \mathcal{P}$, fix $e_1(P),\ldots,e_{k}(P)\in\mathbb{F}^n$ which span $P$, and $e_{k+1}(P),\ldots,e_n(P)\in\mathbb{F}^n$ transverse to $P$ (see Definition \ref{def: transverse directions}). Claim~\ref{polynomial method} below states that there exists a low degree polynomial, all of whose derivatives in directions $e_{k+1}(P),\ldots,e_n(P)$ up to order $A$ vanish on all $(k-1)$-planes in $\Pi_P$, for all $P\in\mathcal{P}$.

\begin{claim}\label{polynomial method} For all $T>0$ sufficiently large, there exists non-zero $p\in\mathbb{F}[x_1,\ldots,x_n]$ with \begin{equation*}
    \deg p\lesssim T
\end{equation*}
such that for any $P\in\mathcal{P}$
\begin{equation*}
    \big(e_{k+1}(P)\cdot \nabla\big)^{\lambda_{k+1}}\cdots \big(e_n(P)\cdot \nabla\big)^{\lambda_n}p_{|_\Pi}\equiv 0
\end{equation*}
for all $\Pi\in \Pi_P$, for all $(\lambda_{k+1},\ldots,\lambda_n)\in\mathbb{N}^{n-k}$ with $\lambda_{k+1}+\cdots +\lambda_n\leq A$. 
\end{claim}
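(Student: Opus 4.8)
The plan is to prove Claim~\ref{polynomial method} by a dimension/parameter count: each constraint ``$\big(e_{k+1}(P)\cdot\nabla\big)^{\lambda_{k+1}}\cdots\big(e_n(P)\cdot\nabla\big)^{\lambda_n}p_{|_\Pi}\equiv 0$'' on a $(k-1)$-plane $\Pi$ imposes finitely many linear conditions on the coefficients of $p$, and if the total number of conditions is strictly less than the dimension of the space of candidate polynomials, a nonzero solution exists. First I would fix $p$ to range over polynomial mappings $\mathbb{F}^n\to\mathbb{F}$ of degree at most $D$ for $D\sim T$ a suitable multiple of $T$; this space has dimension $\binom{n+D}{D}\sim_n D^n\sim_n T^n$. (In the arbitrary-field setting one works with the coefficient vector in $\mathbb{F}[x_1,\dots,x_n]$ directly, using the Hasse calculus from the Appendix so that ``$\mathcal{D}^\lambda p_{|_\Pi}\equiv 0$'' is genuinely a linear condition on coefficients even over fields of positive characteristic; over $\mathbb{R}$ the ordinary directional derivatives of Subsection~\ref{polcalc} suffice.)

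Next I would count the conditions. For a fixed $P\in\mathcal{P}$ and a fixed $\Pi\in\Pi_P$, the operator $\big(e_{k+1}(P)\cdot\nabla\big)^{\lambda_{k+1}}\cdots\big(e_n(P)\cdot\nabla\big)^{\lambda_n}p$ is again a polynomial, of degree at most $D$, and requiring its restriction to the $(k-1)$-plane $\Pi\cong\mathbb{F}^{k-1}$ to be the zero polynomial is the requirement that a degree-$\leq D$ polynomial mapping on $\mathbb{F}^{k-1}$ vanish identically; since $\mathbb{F}$ is infinite (Lemma~\ref{infinite}) this is equivalent to a set of $\binom{k-1+D}{k-1}\sim_k D^{k-1}$ linear equations in the coefficients of $p$. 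We range $(\lambda_{k+1},\dots,\lambda_n)$ over the $\sim_{n,k} A^{n-k}$ multi-indices of length $\leq A$, and $\Pi$ over the $|\Pi_P|=|J_P|B$ planes in $\Pi_P$, and $P$ over $\mathcal{P}$; since $\sum_{P}|J_P|\leq|J|$, the total number of linear conditions is
\begin{equation*}
    \lesssim_{n,k}\; A^{n-k}\cdot B\cdot |J|\cdot D^{k-1}.
\end{equation*}
Then I would check that for $T$ large the parameter count closes, i.e.\ that this is $<\binom{n+D}{D}\sim_n D^n$, which (taking $D\sim T$ and using $k\geq 2$, $d=n-k+1$) amounts to
\begin{equation*}
    A^{n-k}\,B\,|J|\;\lesssim_{n,k}\; T^{n-k+1}=T^{d}.
\end{equation*}
Substituting the chosen sizes $A\sim T L/(L^{n-k}|\mathcal{P}|)^{1/d}|J|^{1/d}$ and $B\sim T|\mathcal{P}|/(L^{n-k}|\mathcal{P}|)^{1/d}|J|^{1/d}$ gives $A^{n-k}B|J|\sim T^{d}\cdot L^{n-k}|\mathcal{P}|\cdot|J|\big/\big((L^{n-k}|\mathcal{P}|)^{(n-k+1)/d}|J|^{(n-k+1)/d}\big)=T^{d}$, so the exponents are engineered to balance exactly, and inserting a fixed large multiplicative constant into $T$ (absorbed into ``$T$ sufficiently large'') makes the count strict; a nonzero $p$ of degree $\lesssim T$ then exists.

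The main obstacle is not the counting itself but ensuring that each imposed vanishing really is a finite system of \emph{linear} conditions on the coefficients of $p$ that behaves well over an arbitrary (algebraically closed) field. Over $\mathbb{R}$ this is transparent from Subsection~\ref{polcalc}: a directional derivative is linear in $p$ and a polynomial on $\mathbb{F}^{k-1}$ vanishing everywhere has all coefficients zero (Lemma~\ref{infinite}), so each $\Pi$-condition unpacks into $\binom{k-1+D}{k-1}$ scalar linear equations. Over a general field one must replace classical derivatives by Hasse derivatives — this is precisely the point where the naive argument ``breaks down'' as flagged after Lemma~\ref{restriction properties_baby} — and verify that the composition of a Hasse directional derivative with restriction to an affine $(k-1)$-plane is still $\mathbb{F}$-linear in the coefficients with the expected number of outputs; this is exactly what the Appendix material is set up to provide, so here I would simply invoke it. A minor secondary point is to make the ``sufficiently large $T$'' quantitative enough that the various $O_{n,k}(1)$ constants hidden in $A^{n-k}B|J|\lesssim T^d$ versus $\dim\gtrsim T^n$ are dominated; this is handled by choosing the implied constant in $A,B\sim\cdots$ and in $\deg p\lesssim T$ appropriately and is routine.
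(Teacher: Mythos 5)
Your proposal is correct and follows essentially the same parameter-counting argument as the paper: both compare $\sim|J|BA^{n-k}D^{k-1}$ linear conditions against the $\sim D^n$-dimensional space of polynomials of degree $\leq D\sim T$, with the exponents balancing exactly by the choice of $A$ and $B$. The only (cosmetic) difference is how the condition ``$\mathcal{D}^\lambda p_{|_\Pi}$ is the zero polynomial'' is unpacked into $\sim D^{k-1}$ linear equations — you read off the coefficients of the restricted polynomial directly, whereas the paper imposes vanishing to multiplicity $D+1$ along $(D+1)^{k-2}$ lines through each point; both yield the same count.
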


Here and below we are employing Hasse derivatives -- for more details see the Appendix.

Note that $\big(e_{k+1}(P)\cdot \nabla\big)^{\lambda_{k+1}}\cdots \big(e_n(P)\cdot \nabla\big)^{\lambda_n}p_{|_\Pi}$ above denotes the usual restriction to $\Pi$ of the function $\big(e_{k+1}(P)\cdot \nabla\big)^{\lambda_{k+1}}\cdots \big(e_n(P)\cdot \nabla\big)^{\lambda_n}p:\mathbb{F}^n\rightarrow\mathbb{F}$. Since $\mathbb{F}$ is an infinite field, this restriction is the zero function if and only if the polynomial
\begin{equation*}
    \big(e_{k+1}(P)\cdot \nabla\big)^{\lambda_{k+1}}\cdots \big(e_n(P)\cdot \nabla\big)^{\lambda_n}p(x_0+\Omega_{\Pi}t)=0\in\mathbb{F}[t_1,\ldots,t_{k-1}]
\end{equation*}
for any $x_0\in P$ and any $n\times (k-1)$ matrix $\Omega_{\Pi}$ whose columns are $(k-1)$ fixed linearly independent vectors in $\mathbb{F}^n$ parallel to $\Pi$.

\begin{proof}[Proof of Claim~\ref{polynomial method}] For each $\Pi\in \bigcup_{P\in\mathcal{P}}\Pi_P$, fix $(k-1)$ linearly independent vectors in $\mathbb{F}^n$ which are parallel to $\Pi$, and denote by $\Omega_{\Pi}$ the $n\times (k-1)$ matrix with these vectors as columns. Recall that $\Pi_P$ is the disjoint union, over all $x\in J$ that have chosen $P$ (i.e., with $P(x)=P$), of all $\Pi\in\Pi_{P(x)}$ through $x$. Therefore, we may take our polynomial to be any non-zero $p\in\mathbb{F}[x_1,\ldots,x_n]$ with $\deg p\lesssim T$ such that, for any $x\in J$,
\begin{equation}\label{eq:pointwise}
    \big(e_{k+1}(P)\cdot \nabla\big)^{\lambda_{k+1}}\cdots \big(e_n(P)\cdot \nabla\big)^{\lambda_n}p(x+\Omega_\Pi t)=0
\end{equation}
in $\mathbb{F}[t_1,\ldots,t_{k-1}]$, for all $\Pi\in \Pi_{P(x)}$ through $x$, for all $(\lambda_{k+1},\ldots,\lambda_{n})\in\mathbb{N}^{n-k}$ with $\lambda_{k+1}+\cdots +\lambda_{n}\leq A$. 

Now, we assert that in order to ensure that a polynomial $p$ of degree at most $D$ satisfies the vanishing requirements above, it suffices to impose $\sim |J|BA^{n-k}D^{k-1}$ linear conditions on the coefficients of the polynomial. Indeed, for each $x\in J$, for each one of the $B$ in total $(k-1)$-planes $\Pi\in\Pi_{P(x)}$ through $x$, we simply require that each of the $\sim A^{n-k}$ polynomials
\begin{equation*}
    \big(e_{k+1}(P)\cdot \nabla\big)^{\lambda_{k+1}}\cdots \big(e_n(P)\cdot \nabla\big)^{\lambda_n}p(x+\Omega_{\Pi}t)\in\mathbb{F}[t_1,\ldots,t_{k-1}],
\end{equation*}
for all $\lambda_{k+1}+\cdots +\lambda_n\leq A$, is the zero polynomial. Since $\mathbb{F}$ is an infinite field, each of these polynomials is the zero polynomial in $\mathbb{F}[t_1,\ldots,t_{k-1}]$ as long as it vanishes with multiplicity at least $D+1$ at $0$ along each of $(D+1)^{k-2}$ lines through $0$ appropriately arranged in $\mathbb{F}^{k-1}$. (To see this, first consider the case $k=3$, and then proceed by induction.) Therefore, a non-zero polynomial of degree $\leq D$ with the desired vanishing properties exists as long as 
\begin{equation*}
    |J|A^{n-k}BD^{k-1}\lesssim D^n,
\end{equation*}
or equivalently
\begin{equation*}
    |J|A^{n-k}B\lesssim D^{n-k+1}=D^{d},
\end{equation*}
a property that is satisfied by the chosen parameters when $D\sim T$.
\end{proof}

Fix $p$ as in Claim~\ref{polynomial method}. We say that a $k$-plane $P\in\mathcal{P}$ is \textit{exceptional} if
\begin{equation*}
    \big(e_{k+1}(P)\cdot \nabla\big)^{\lambda_{k+1}}\cdots \big(e_n(P)\cdot \nabla\big)^{\lambda_n}p_{|_P}\not\equiv 0
\end{equation*}
for some $(\lambda_{k+1},\ldots,\lambda_{n})\in\mathbb{N}^{n-k}$ with $\lambda_{k+1}+\cdots +\lambda_{n}\leq A$. Let 
\begin{equation*}
    J_{\text{exc}}:=\{x\in J:\; P(x)\text{ is exceptional}\}.
\end{equation*}
It will transpire that using Claim~ \ref{polynomial method} one can count the multijoints in $J_{\text{exc}}$. The main observation at this point is that the multijoints which cannot be counted using Zhang's argument in \cite{Zhang_16} are all in $J_{\text{exc}}$. 

More precisely, recall that for each $x\in J$ we have fixed lines $l_1(x)\in\mathcal{L}_1,\ldots, l_{n-k}(x)\in\mathcal{L}_{n-k}$ through $x$; denote by $e(l_1(x)),\ldots, e(l_{n-k}(x))$ their respective directions and observe that these directions are transverse to $P(x)$ since $x$ is a multijoint.

Since $p$ is not the zero polynomial, for every $x\in J$ there exists $a(x)=(a_1(x),\ldots,a_n(x))\in\mathbb{N}^n$ of minimal length such that
\begin{equation} \label{eq:lowest degree monomial}
   \big(e_1(P(x))\cdot\nabla\big)^{a_1(x)}\cdots \big(e_k\big(P(x)\big)\cdot \nabla)^{a_k(x)}\cdot\Big(e\big(l_1(x)\big)\cdot\nabla\Big)^{a_{k+1}(x)}\cdots\Big(e\big(l_{n-k}(x)\big)\cdot \nabla\Big)^{a_n(x)}p(x)\neq 0.
\end{equation}
We fix some choice of $\{a(x)\}_{x \in J}$. We say that $x$ \textit{is of type }1 if 
$$a_{k+1}(x)+\cdots + a_n(x)>A;
$$
otherwise, we say that $x$ \textit{is of type }2. 

Let $J_1$ be the set of multijoints in $J$ of type 1, and $J_2$ the set of multijoints in $J$ of type 2.

\textbf{Estimating $\boldsymbol{|J_1|}$.} The multijoints in $J_1$ can be counted in a similar manner as in \cite{Zhang_16}. Indeed, let $x\in J_{1}$. By definition, it holds that
$$a_{k+1}(x)+\cdots +a_{n}(x)>A,
$$
thus there exists $i\in\{1,\ldots,n-k\}$ for which $x$ \textit{is of type }$(1,i)$, meaning that
$$a_{k+i}(x)\gtrsim A.
$$
Fix $i\in\{1,\ldots,n-k\}$. Since $p$ is not the zero polynomial, it follows by Lemma~\ref{restriction properties_baby}/Lemma~\ref{restriction properties} that for every line $l\in\mathcal{L}_i$ there exists a directional derivative $\mathcal{D}_lp$ of $p$ of minimal order such that
\begin{equation*}
    \mathcal{D}_lp_{|_l}(t):=\mathcal{D}_l p\big(x_0+te(l)\big)\neq 0\in\mathbb{F}[t]
\end{equation*}
for some (any) $x_0\in l$.
By Lemma~\ref{minimal derivative_baby}/Lemma~\ref{minimal derivative} and the minimality property of $a(x)\in\mathbb{N}^n$ this derivative satisfies
\begin{equation*}
     {\rm mult}(\mathcal{D}_lp_{|_l},t_y)\geq a_{k+i}(y)\gtrsim A\text{ for all }y\in J\text{ of type }(i,1)\text{ which choose }l,
\end{equation*}
where for each $y\in l$, $t_y\in \mathbb{F}$ is defined by $y=x_0+t_ye(l)$. Thus, by B\'ezout's theorem,
\begin{eqnarray*}
    \begin{aligned}
        |\{x\in J\text{ of type }(i,1)\}| \, A&=\sum_{l\in\mathcal{L}_i}\;\;\;\sum_{x\in J\text{ of type }(i,1)\text{ choosing }l}A\\
        & \lesssim \sum_{l\in\mathcal{L}_i}\;\;\;\sum_{x\in J\text{ of type }(i,1)\text{ choosing }l}{\rm mult}(\mathcal{D}_lp_{|_l},t_x)\\
        &\leq \sum_{l\in\mathcal{L}_i}\deg \mathcal{D}_lp_{|_l}\leq \sum_{l\in\mathcal{L}_i}\deg p\\
        &\lesssim |\mathcal{L}_i| T.
    \end{aligned}
\end{eqnarray*}
It follows that
$$|\{x\in J\text{ of type }(i,1)\}|\cdot\frac{T L}{{(L^{n-k}\;|\mathcal{P}|)^{1/d}|J|^{1/d}}}\lesssim |\mathcal{L}_i|T,
$$
and thus
$$|\{x\in J\text{ of type }(i,1)\}|\lesssim (L^{n-k}\;|\mathcal{P}|)^{1/d}|J|^{1/d}
$$
for all $i=1,\ldots,n-k$, implying that
$$|J_1|\lesssim  (L^{n-k}\;|\mathcal{P}|)^{1/d}|J|^{1/d}.
$$

\textbf{Estimating $\boldsymbol{|J_2|}$.} The crucial observation here is that 
\begin{equation*}
    J_2\subseteq J_{\text{exc}}.
\end{equation*}
Indeed, let $x\in J_2$. By definition,
\begin{equation*}
    a_{k+1}(x)+\cdots+a_n(x)\leq A.
\end{equation*}
Combining \eqref{eq:lowest degree monomial} with Lemma~\ref{restriction properties_baby}/Lemma~\ref{restriction properties} and Lemma \ref{infinite}, one obtains
\begin{equation*}
    \Big(e\big(l_1(x)\big)\cdot\nabla\Big)^{a_{k+1}(x)}\cdots\Big(e\big(l_{n-k}(x)\big)\cdot \nabla\Big)^{a_n(x)}p_{|_{P(x)}}\not\equiv 0.
\end{equation*}

Lemma~\ref{order invariance of minimal derivative_baby}/Lemma~\ref{order invariance of minimal derivative} thus implies that any directional derivative $\mathcal{D}p$ of $p$ of minimal order with the property that $\mathcal{D}p_{|_{P(x)}}\not\equiv 0$ has order at most $A$. In particular, any derivative $\overline{\mathcal{D}}p$ of $p$ in directions $e_{k+1}(P(x)),\ldots,e_n(P(x))$ (the vectors appearing in the statement of Claim~\ref{polynomial method}) of minimal order such that 
\begin{equation} \label{small non-vanishing derivative}
   \overline{\mathcal{D}}p_{|_{P(x)}}\not\equiv 0
\end{equation}
takes the form
\begin{equation*}
    \overline{\mathcal{D}}p=\big(e_{k+1}(P(x))\cdot\nabla\big)^{\lambda_{k+1}}\cdots \big(e_n(P(x))\cdot\nabla\big)^{\lambda_n}p
\end{equation*}
for some $(\lambda_{k+1},\ldots,\lambda_n)\in\mathbb{N}^{n-k}$ with $\lambda_{k+1}+\cdots +\lambda_n\leq A$. Since the existence of such a derivative is guaranteed (see Remark~\ref{considering different directions} for further clarification), it immediately follows that $P(x)$ is exceptional, hence $x\in J_{\text{exc}}$. 

It thus suffices to estimate $|J_{\text{exc}}|$. Observe that
\begin{equation*}
    J_{\text{exc}}=\bigsqcup_{\text{exceptional }P\in\mathcal{P}}\;J_P.
\end{equation*}
Now, let $P\in\mathcal{P}$ be an exceptional $k$-plane; by definition, there exists $(\lambda_{k+1},\ldots,\lambda_{n})\in\mathbb{N}^{n-k}$, with $\lambda_{k+1}+\cdots+\lambda_n\leq A$, such that
\begin{equation*}
    \big(e_{k+1}(P)\cdot \nabla\big)^{\lambda_{k+1}}\cdots \big(e_n(P)\cdot \nabla\big)^{\lambda_n}p_{|_P}\not\equiv 0.
\end{equation*}
On the other hand, by Claim~\ref{polynomial method} it further holds that
\begin{equation*}
    \big(e_{k+1}(P)\cdot \nabla\big)^{\lambda_{k+1}}\cdots \big(e_n(P)\cdot \nabla\big)^{\lambda_n}p_{|_\Pi}\equiv 0\text{ for all }\Pi\in\Pi_P.
\end{equation*}

Therefore, the polynomial
\begin{equation*}
    g(t):=\big(e_{k+1}(P)\cdot \nabla\big)^{\lambda_{k+1}}\cdots \big(e_n(P)\cdot \nabla\big)^{\lambda_n}p(x_P+t_1e_1(P)+\cdots +t_ke_k(P))\in\mathbb{F}[t_1,\ldots,t_k]
\end{equation*}
(for some fixed $x_P\in P$) is not the zero polynomial, but it vanishes everywhere on $\widetilde{\Pi}$ for every $\widetilde{\Pi}$ in a family of distinct $(k-1)$-planes in $\mathbb{F}^k$ of size $|\Pi_P|$. It follows by Lemma~\ref{fundamental theorem of algebra} that
\begin{equation*}|J_P|B=|\Pi_P|\leq \deg g\lesssim T
\end{equation*}
for every exceptional $P$.
Therefore,
\begin{equation*}
    |J_{\text{exc}}|= \sum_{\text{exceptional }P\in\mathcal{P}}\;\;|J_P|\leq |\mathcal{P}|\, \max_P |J_P|\lesssim \frac{|\mathcal{P}|T}{B}
\end{equation*}
and thus
\begin{equation*}
    |J_2|\leq |J_{\text{exc}}|\lesssim (L^{n-k}\;|\mathcal{P}|)^{1/d}|J|^{1/d}.
\end{equation*}

Combining the above estimates on $|J_1|$ and $|J_2|$, one obtains the desired estimate
$$|J|\lesssim L|\mathcal{P}|^{\frac{1}{d-1}}.
$$

\end{proof}

\section{Preliminaries for the discrete Kakeya-type Theorem~\ref{3d_basic}} \label{Section 4}

In this section we further explain the statement of Theorem~\ref{3d_basic} and outline some computational estimates of an algebraic-geometric nature which are useful for its proof.

\subsection{Further understanding planar structure}\label{ppp} Let $J$ be a set of points, incident to lines in a family $\mathcal{L}$, that has planar structure.

As already mentioned in the Introduction, the planar structure of $J$ implies, roughly speaking, that there exists a plane through each point $x\in J$ that contains the bulk of the lines in $\mathcal{L}$ through $x$. Such a situation in itself however is not sufficient to imply planar structure.

In particular, bearing in mind the notation of Definition~\ref{definition: planar structure}, view the points in $J_\Pi$ and the lines in $\mathcal{L}_{\Pi}$ as \textit{associated to} $\Pi$. Assign a different colour to each plane $\Pi$, and assign the colour of $\Pi$ to the points in $J_\Pi$ and the lines in $\mathcal{L}_{\Pi}$. (Note that a blue plane may contain a red line, and a blue line may contain a red point.) We say that a \textit{fan} of colour $\mathcal{C}$ is any collection of coplanar lines of colour $\mathcal{C}$ all passing through the same point of colour $\mathcal{C}$ (which may be thought of as the \textit{root} of the fan, or the point from which the fan \textit{emanates}).

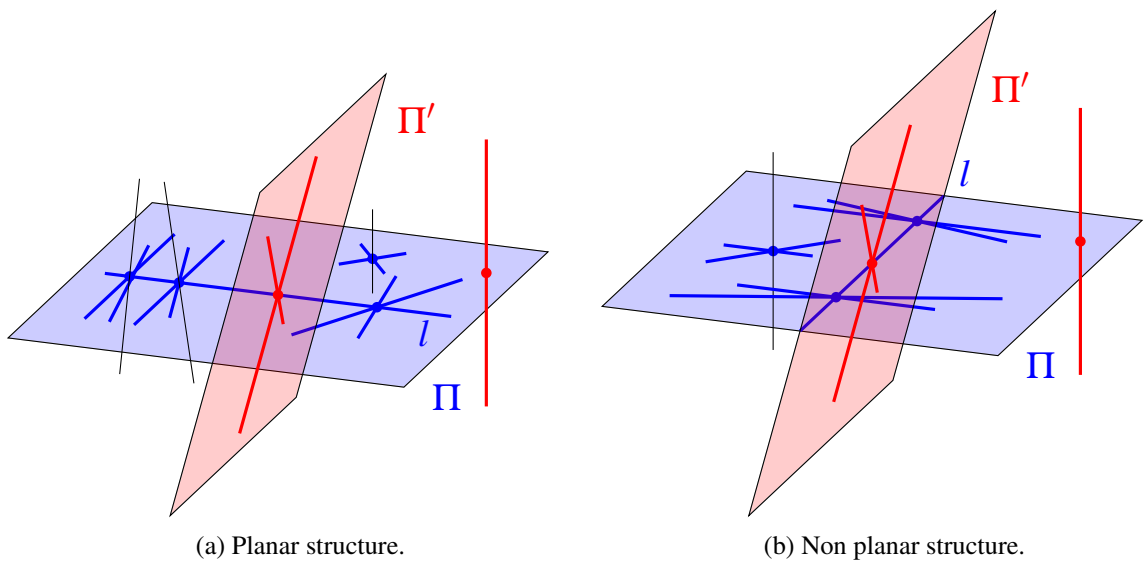
\begin{figure}[htbp]
\centering 
 
  %
  %
\begin{subfigure}[t]{0.49\linewidth}
\tdplotsetmaincoords{70}{110}
\begin{tikzpicture}[tdplot_main_coords,font=\sffamily, scale=0.7];

\node[anchor=south west,align=center] (line) at (3,4,-1) {\Large{\color{blue}$\Pi$}};
;

\node[anchor=south west,align=center] (line) at (2.4,3.5,0) {\Large{\color{blue}$l$}};
;

\draw[fill=blue,opacity=0.2] (-4,-4,0) -- (-4,4,0) -- (4,4,0) -- (4,-4,0) -- cycle;
\draw[black, thin] (-4,-4,0) -- (-4,4,0) -- (4,4,0) -- (4,-4,0) -- cycle;
 
\draw[very thick, blue](0,-3.5,0)--(0,3.5,0);

\draw[very thick, blue](2.5,-3,0)--(-2.5,-3,0);

\draw[very thick, blue](-1.75,-3.25,0)--(2.5,-2.5,0);

\draw[very thick, blue](2.5,-2,0)--(-2.5,-2,0);

\draw[very thick, blue](1.9,-1.5,0)--(-1.9,-2.5,0);

\draw[very thick, blue](1.75,2.25,0)--(-1.75,1.75,0);

\draw[very thick, blue](2,1,0)--(-2,3,0);

\fill[blue] (0,-3,0) circle (1mm);
\draw[black](0,-3.2,-2)--(0,-2.8,2);

\fill[blue] (0,-2,0) circle (1mm);
\draw[black](0,-2.3,2)--(0,-1.7,-2);

\fill[blue] (0,2,0) circle (1mm);

\fill[blue] (-2.5,1,0) circle (1mm);
\draw[very thick, blue](-3,1.5,0)--(-2,0.5,0);
\draw[very thick, blue](-1.8,1.5,0)--(-3.2,0.5,0);

\draw[black](-2.5,1,1)--(-2.5,1,-0.7);

\node[anchor=south west,align=center] (line) at (3,3.3,4.5)
{\Large${\color{red}\Pi'}$};

\fill[red] (-2.5,3.3,0) circle (1mm);
\draw[red, very thick] (-2.5,3.3,2.7) -- (-2.5,3.3,-2.7);

\tdplotsetrotatedcoords{-90}{75}{0}
\begin{scope}[tdplot_rotated_coords]
\draw[fill=red,opacity=0.2] (-3.5,-3.5,0) -- (-3.5,3.5,0) -- (3.5,3.5,0) -- (3.5,-3.5,0) -- cycle;
\draw[black, thin] (-3.5,-3.5,0) -- (-3.5,3.5,0) -- (3.5,3.5,0) -- (3.5,-3.5,0) -- cycle;

\fill[red] (0,0,0) circle (1mm);

\draw[red, very thick] (0,0,0);
\draw[red, very thick] (3,0,0) -- (-3,0,0);
\draw[red, very thick] (-2,2,0)--(1,-1,0);
\end{scope}
\end{tikzpicture}
\caption{Planar structure.}
\end{subfigure}
\begin{subfigure}[t]{0.49\linewidth}
\tdplotsetmaincoords{70}{110}
\begin{tikzpicture}[tdplot_main_coords,font=\sffamily, scale=0.7]
\draw[fill=blue,opacity=0.2] (-4,-4,0) -- (-4,4,0) -- (4,4,0) -- (4,-4,0) -- cycle;
\draw[black, thin] (-4,-4,0) -- (-4,4,0) -- (4,4,0) -- (4,-4,0) -- cycle;

\node[anchor=south west,align=center] (line) at (3,4,-1) {\Large{\color{blue}$\Pi$}};
;

\node[anchor=south west,align=center] (line) at (3,3.3,4.5) {\Large${\color{red}\Pi'}$};

\node[anchor=south west,align=center] (line) at (-4,0.1,0) {\Large${\color{blue}l}$};

\draw[very thick, blue](-4,0,0)--(4,0,0);

\fill[blue] (-2.5,0,0) circle (1mm);
\fill[blue] (2,0,0) circle (1mm);

\fill[blue] (0,-2,0) circle (1mm);
\draw[blue, very thick](0,-3,0)--(0,-1.2,0);
\draw[blue, very thick](1,-3,0)--(-1,-1,0);
\draw[black](0,-2,2)--(0,-2,-2);

\draw[blue, very thick] (2,-2,0) -- (2,2,0);
\draw[blue, very thick](3,-3,0)--(1,3,0);

\draw[blue, very thick] (-2.5,-2.5,0) -- (-2.5,2.5,0);
\draw[blue,very thick](-2,2,0)--(-3,-2,0);

\fill[red] (-2.5,3.3,0) circle (1mm);
\draw[red, very thick] (-2.5,3.3,2.7) -- (-2.5,3.3,-2.7);

\tdplotsetrotatedcoords{-90}{75}{0}
\begin{scope}[tdplot_rotated_coords]
\draw[fill=red,opacity=0.2] (-4,-4,0) -- (-4,4,0) -- (4,4,0) -- (4,-4,0) -- cycle;
\draw[black, thin] (-4,-4,0) -- (-4,4,0) -- (4,4,0) -- (4,-4,0) -- cycle;

\fill[red] (0,0,0) circle (1mm);

\draw[red, very thick] (0,0,0);
\draw[red, very thick] (3,0,0) -- (-3,0,0);
\draw[red, very thick] (-2,2,0)--(1,-1,0);

\end{scope}

\end{tikzpicture}
\caption{Non planar structure.}
\end{subfigure}
 \captionsetup{singlelinecheck=off}
\caption[.]{\small{Above are examples of an allowed and a forbidden configuration inside a set with planar structure. The black lines are lines in $\mathcal{L}$ that have not been coloured, i.e. not assigned to a plane. The second diagram demonstrates that a union of sets with planar structure does not necessarily have planar structure.}}\label{allowed}
\end{figure}

\begin{enumerate}[(i)]
   \item Property P1) implies that, if $\Pi$ is blue, then there is a blue fan inside $\Pi$ emanating from each blue point in $\Pi$. Moreover, roughly speaking, each such fan contains the bulk of lines in $\mathcal{L}$ through its root (as in Figure~\ref{allowed}(a).)
   \item If a blue plane $\Pi$ contains a red point $x$, then $x$ is associated to some red plane $\Pi'\neq \Pi$. In particular, there exists a red fan emanating from $x$ fully inside $\Pi'$ (as in Figure~\ref{allowed}(a).)
   \item Observe that property P2) can be rephrased as follows: Let $\Pi\in\mathcal{P}$ and $l\in\mathcal{L}_\Pi$; then
    \begin{equation*}
       \text{ for each }x\in J\cap l\text{, either }x\in J_\Pi\text{ or }x\in J_{\Pi'}\text{ for some }\Pi'\in\mathcal{P}\text{ transverse to }l.
\end{equation*}
To illustrate this, let $l$ be a blue line. This means that there exists a blue fan emanating from each blue point in $l$, lying fully inside the blue plane $\Pi$ that $l$ is associated to. If $l$ contains some red point $x$ as well, then the fact that $l$ is not red (by P2)) implies that the red plane $\Pi'$ associated to $x$ (which carries the red fan emanating from $x$) cannot contain $l$ (and is thus transverse to $l$); see Figure~\ref{allowed}(a). In other words, if $l$ is the intersection of the blue plane $\Pi$ with a red plane $\Pi'$, then $l$ does not contain red points (and thus there are no red fans, rooted at $l$, that live inside $\Pi'$). In other words, a configuration as in Figure~\ref{allowed}(b) is forbidden for a set of planar structure.
\end{enumerate}

\subsection{Algebraic preliminaries} Theorem \ref{3d_basic} will be proved using the \textit{polynomial partitioning} technique of Guth and Katz \cite{Guth_Katz_2010}. The method, described in the theorem that follows, exploits the topology of Euclidean space to partition finite sets of points into smaller parts, using the zero set of a polynomial.

\begin{theorem}\emph{\textbf{(Polynomial partitioning)}}\label{polynomial partitioning} Let $S$ be a finite set of points in $\mathbb{R}^n$, and $d>1$. Then there exists a non-zero polynomial $p \in \mathbb{R}[x_1,...,x_n]$, of degree $\leq d$, and $\sim_n d^n$ pairwise disjoint open sets (cells) $C_1,\ldots,C_m$, each of which contains $\leq |S|/m\lesssim_n |S|/d^n$ points of $S$, such that $\mathbb{R}^n=C_1\sqcup\ldots\sqcup C_m\sqcup Z_p$, where $Z_p$ is the zero set of $p$.\end{theorem}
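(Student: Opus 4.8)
The plan is to follow the original Guth--Katz argument, whose engine is the \emph{polynomial ham-sandwich theorem}: given any $N$ finite point sets (or finite Borel measures) $A_1,\dots,A_N$ in $\mathbb{R}^n$, there is a non-zero polynomial $q$ of degree $\lesssim_n N^{1/n}$ such that $|A_i\cap\{q>0\}|\le |A_i|/2$ and $|A_i\cap\{q<0\}|\le |A_i|/2$ for every $i$. This follows by composing with the Veronese map: a polynomial of degree $\le D'$ on $\mathbb{R}^n$ is an affine-linear functional applied to the vector of monomials of degree $\le D'$, so choosing $D'$ with $\binom{n+D'}{n}-1\ge N$ (which permits $D'\sim_n N^{1/n}$) and invoking the classical ham-sandwich theorem in $\mathbb{R}^{\binom{n+D'}{n}-1}$ — itself a standard consequence of the Borsuk--Ulam theorem — yields $q$.

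Granting this, I would build $p$ by iterated bisection. Set $S_{0,1}=S$. Having produced a partition of a subset of $S$ into $2^r$ sets $S_{r,1},\dots,S_{r,2^r}$, each of cardinality $\le |S|/2^r$, apply the polynomial ham-sandwich theorem to these $2^r$ sets to obtain a non-zero $q_r$ with $\deg q_r\lesssim_n (2^r)^{1/n}=2^{r/n}$ simultaneously bisecting all of them; then replace each $S_{r,i}$ by $S_{r,i}\cap\{q_r>0\}$ and $S_{r,i}\cap\{q_r<0\}$ (points on $\{q_r=0\}$ are set aside, destined for the final zero set), obtaining $2^{r+1}$ sets each of cardinality $\le |S|/2^{r+1}$. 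After $j$ steps put $p=q_0q_1\cdots q_{j-1}$, so that $Z_p=\bigcup_{r<j}\{q_r=0\}$.

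The degree of $p$ is $\sum_{r=0}^{j-1}\deg q_r\lesssim_n\sum_{r=0}^{j-1}2^{r/n}\lesssim_n 2^{j/n}$, the geometric series being controlled by its final term. The cells are the non-empty sign cells of $(q_0,\dots,q_{j-1})$ — the sets on which each $q_r$ has a prescribed nowhere-zero sign; these are pairwise disjoint open sets whose union together with $Z_p$ is $\mathbb{R}^n$, there are at most $2^j$ of them (alternatively, the Milnor--Thom bound shows $\mathbb{R}^n\setminus Z_p$ has $O_n(d^n)$ components), and each contains at most $|S|/2^j$ points of $S$, since at every stage $r<j$ such a cell lies in one of $\{q_r>0\}$ or $\{q_r<0\}$ and hence inside one of the $S_{j,i}$-branches. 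Finally choose $j$ maximal subject to $\deg p\le d$; since one further bisection step multiplies $2^{j/n}$ by only a bounded factor, this gives $2^{j/n}\sim_n d$, i.e. $2^j\sim_n d^n$, so there are $m\sim_n d^n$ cells, each containing $\le |S|/m\lesssim_n |S|/d^n$ points of $S$.

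The only non-routine ingredient — and the main obstacle — is the polynomial ham-sandwich theorem, and thus ultimately the Borsuk--Ulam theorem and the Veronese linearisation; everything else is the geometric-series degree bookkeeping and the elementary combinatorics of sign cells. (It is precisely this use of the topology of $\mathbb{R}^n$, via Borsuk--Ulam, that has no analogue over finite fields.)
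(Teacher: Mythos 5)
The paper never proves this statement: Theorem~\ref{polynomial partitioning} is quoted as the Guth--Katz polynomial partitioning theorem and used as a black box (cited to \cite{Guth_Katz_2010}), so there is no internal proof to compare against. Your argument is precisely the standard Guth--Katz proof -- discrete polynomial ham-sandwich via the Veronese embedding and Borsuk--Ulam, iterated bisection, and the geometric-series degree count -- and it is essentially correct; the discrete ham-sandwich step (for finite point sets rather than measures) does require the usual perturbation/limiting refinement of Stone--Tukey, which you are right to treat as classical.

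Two bookkeeping points deserve care. First, choosing $j$ ``maximal subject to $\deg p\le d$'' only gives the lower bound $2^{j}\gtrsim_n d^{n}$: the actual degrees of the $q_r$ may be strictly smaller than the ham-sandwich bound, so this maximal $j$ could far exceed $n\log_2 d$, and then $2^{j}$ no longer controls the number of cells from above. The clean fix is to fix $j$ as the largest integer with $C_n 2^{j/n}\le d$, where $C_n 2^{j/n}$ is the a priori bound on $\deg(q_0\cdots q_{j-1})$; then $\deg p\le d$ and $2^{j}\sim_n d^{n}$ simultaneously (your parenthetical appeal to Milnor--Thom is an acceptable alternative for the upper bound on the number of cells). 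Second, the statement as phrased asserts $m\sim_n d^{n}$ cells each with at most $|S|/m$ points; the number of \emph{nonempty} sign cells need not be $\gtrsim_n d^{n}$, so either take all $2^{j}$ sign cells (empty ones are harmless, since the decomposition $\mathbb{R}^n=C_1\sqcup\cdots\sqcup C_m\sqcup Z_p$ and the bound $\le |S|/m=|S|/2^{j}$ persist), or note that only the upper bounds -- at most $O_n(d^{n})$ cells and at most $O_n(|S|/d^{n})$ points per cell -- are actually used in Step 2 of the proof of Theorem~\ref{3d}. With these trivial adjustments your proof delivers the theorem verbatim.
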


Since its birth in 2010, polynomial partitioning has revolutionised incidence geometry, and has further shed light on some long-standing harmonic analytic problems. The reason is that, when it comes to point-line incidences, extremising situations tend to occur when the points and lines in question cluster on low-degree varieties. When this is indeed the case for a specific point-line incidence problem, polynomial partitioning has the potential to allow a reduction of the original problem to this type of situation. In other words, and roughly speaking, it naturally reduces to the study of extremisers. 

At a more technical level, polynomial partitioning may be viewed as a divide-and-conquer approach: the fact that each cell carries few points suggests that its contribution to point-line incidences could potentially be controlled by some induction argument. If that is achieved, it remains to control the point-line incidences that occur on the zero set itself -- and this is facilitated via the computational bounds below, which follow from B\'ezout's theorem in algebraic geometry.

\begin{theorem} \textbf{\emph{(Guth--Katz \cite{Guth_Katz_2008})}} \label{bezout for lines}Let $p_1,p_2\in\mathbb{R}[x_1,x_2,x_3]$ be non-zero. If $p_1, p_2$ do not have a common factor, then at most $\deg p_1 \cdot \deg p_2$ lines in $\mathbb{R}^3$ lie simultaneously in the zero set of $p_1$ and the zero set of $p_2$.
\end{theorem}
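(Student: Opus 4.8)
The plan is to recast the assertion as a statement about the common zero set $V=Z(p_1)\cap Z(p_2)$ viewed over an algebraically closed field, to observe that the no-common-factor hypothesis forces $V$ to be at most a curve, to note that every line in $V$ must then be a whole irreducible component of $V$, and finally to bound the number of such components by B\'ezout's theorem. The main obstacle is the dimension step: the hypothesis ``no common factor'' enters there in an essential way, since it is exactly what prevents an entire plane's worth of lines from lying in $Z(p_1)\cap Z(p_2)$.

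First I would reduce to working over $\mathbb{C}$. Regard $p_1,p_2$ as elements of $\mathbb{C}[x_1,x_2,x_3]$; they still have no common factor, because if $h=\gcd_{\mathbb{C}}(p_1,p_2)$ then, $p_1$ and $p_2$ having real coefficients, $\overline{h}$ is also a greatest common divisor, so $h$ is an associate of a polynomial with real coefficients. Moreover, if $\ell\subseteq\mathbb{R}^3$ is a line with $p_1|_\ell\equiv p_2|_\ell\equiv 0$, then $p_1,p_2$ vanish identically along the real line $\ell$, hence (the real line being infinite and therefore Zariski dense in its complexification) along the complex affine line $\ell_{\mathbb{C}}\subseteq\mathbb{C}^3$; and distinct real lines complexify to distinct complex lines, since $\ell_{\mathbb{C}}\cap\mathbb{R}^3=\ell$. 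Thus it suffices to bound the number of complex affine lines contained in $V:=\{x\in\mathbb{C}^3:p_1(x)=p_2(x)=0\}$.

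Next I would establish that $\dim V\le 1$. Factor $p_1=\prod_i q_i$ into irreducibles in the unique factorisation domain $\mathbb{C}[x_1,x_2,x_3]$. Each $Z(q_i)$ is an irreducible surface, and since $q_i\nmid p_2$ the polynomial $p_2$ does not vanish identically on $Z(q_i)$, so $Z(q_i)\cap Z(p_2)$ has dimension at most $1$; taking the union over $i$ gives $\dim V\le 1$. Consequently, any complex affine line $\ell_{\mathbb{C}}\subseteq V$, being irreducible of dimension $1$, cannot be properly contained in any irreducible subset of $V$ of dimension $1$ (a proper closed irreducible subset has strictly smaller dimension) nor in one of dimension $0$, so $\ell_{\mathbb{C}}$ is itself an irreducible component of $V$. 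Finally, passing to projective closures, $\overline{V}$ is contained in $Z(P_1)\cap Z(P_2)\subseteq\mathbb{P}^3$, where $P_i$ is the homogenisation of $p_i$, of degree $d_i:=\deg p_i$; these have no common factor either, so by B\'ezout's theorem in $\mathbb{P}^3$ the set $Z(P_1)\cap Z(P_2)$ is a curve of degree at most $d_1d_2$, hence so is $\overline{V}$. Since each one-dimensional irreducible component contributes degree at least $1$, there are at most $d_1d_2$ of them, and therefore at most $d_1d_2$ lines in $V$, which is the claimed bound.

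As an alternative that avoids invoking B\'ezout in $\mathbb{P}^3$, I would instead slice by a generic $2$-plane $\Pi$: for generic $\Pi$ the restrictions $p_i|_\Pi$ have degree exactly $d_i$, the lines meet $\Pi$ in pairwise distinct points, and $p_1|_\Pi,p_2|_\Pi$ have no common factor, so B\'ezout for plane curves caps the number of those points — hence the number of lines — by $d_1d_2$. The delicate point in that route is precisely the persistence of coprimality under a generic plane section, which one verifies via a resultant computation after a generic linear change of coordinates making both $p_i$ of full degree in one variable with constant leading coefficient; I would regard that verification, rather than the counting, as the technical heart of this second approach.
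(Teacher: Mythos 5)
The paper does not prove this statement at all: it is quoted verbatim from Guth--Katz and used as a black box, so there is no in-paper argument to compare against; I will compare your proposal with the original Guth--Katz proof instead. Your main route is correct and is genuinely different from theirs. You complexify (the conjugation argument for preservation of coprimality is fine), show $\dim V\le 1$ via the Nullstellensatz applied to each irreducible factor of $p_1$, conclude that every line is an irreducible component of $V$, and then bound the number of components by the Bézout inequality for a proper intersection of two hypersurfaces in $\mathbb{P}^3$ (sum of degrees of the components is at most $d_1d_2$). The only caveat is that this last input is the full-strength projective Bézout in $\mathbb{P}^3$ (e.g.\ via Hilbert polynomials or a generic hyperplane section), not the elementary plane version, so the ``counting'' step is where all the depth is hidden; you should also note that the coprimality of the homogenisations $P_1,P_2$ needs the (easy) observation that $x_0$ divides neither. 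Your sketched alternative --- generic slicing, plane Bézout, and a resultant computation to preserve coprimality --- is essentially the original Guth--Katz argument (they phrase it via a generic projection: the resultant $\mathrm{Res}_{x_3}(p_1,p_2)$ is a non-zero polynomial of degree at most $d_1d_2$ in two variables vanishing on the distinct projected lines, and such a polynomial vanishes on at most $\deg$ many distinct lines). What your primary approach buys is conceptual transparency and avoidance of genericity arguments; what the resultant approach buys is that it is entirely elementary, needing only plane Bézout and one-variable root counting.
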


\begin{definition}
{\rm Let $p\in\mathbb{R}[x_1,x_2,x_3]$ be a non-zero polynomial of degree $\leq d$. Let $Z$ be the zero set of $p$. Denote by $p_{sf}$ the square-free polynomial obtained after eliminating all the squares appearing in the expression of $p$ as a product of irreducible polynomials in $\mathbb{R}[x_1,x_2, x_3]$.\footnote{Observe that $p$ and $p_{sf}$ have the same zero set.}

A \textit{critical point $x$ of $Z$} is a point of $Z$ for which $\nabla p_{sf}(x) = 0$. Any other point of $Z$ is
called a \textit{regular point of $Z$}. A point $x\in\mathbb{R}^3$ is a \textit{flat point of $Z$} if it is a regular point of $Z$ lying in at least three co-planar lines of $Z$.

A line in $\mathbb{R}^3$ a \textit{critical line of $Z$} if each point of the line is a critical point of $Z$. 

A line $l$ in $\mathbb{R}^3$ is a \textit{flat line of $Z$} if all the points of $l$, except perhaps for finitely many, are regular
points of $Z$ on which the second fundamental form of $Z$ vanishes.}
\end{definition}

Note that the tangent space to $Z$ at $x$ is well-defined at all regular points $x$ of $Z$.

The number of critical lines inside a variety can easily be controlled by Theorem \ref{bezout for lines} above.

\begin{proposition}\label{few critical lines} \textbf{\emph{(Guth--Katz \cite{Guth_Katz_2008})}}  Let $Z$ be the zero set of a non-zero $p\in \mathbb{R}[x_1, x_2, x_3]$. Then $Z$ contains at most $d^2$ critical lines.
\end{proposition}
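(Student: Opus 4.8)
The plan is to force every critical line of $Z$ into one of a bounded number of pairs of surfaces to which Theorem~\ref{bezout for lines} applies. If $p$ is a nonzero constant then $Z=\varnothing$ and there is nothing to prove, so assume $\deg p\geq 1$ and write $p_{sf}=q_1\cdots q_r$ with the $q_j$ distinct irreducible factors in $\mathbb{R}[x_1,x_2,x_3]$, each of positive degree; since $p_{sf}$ is square-free these are genuinely distinct (no repeated factor), and $S:=\deg p_{sf}=\sum_j\deg q_j\leq d$. A line $\ell$ is irreducible as a variety: parametrising $\ell$ by $t\mapsto a+tv$, the evaluation $f\mapsto f(a+tv)$ is a ring homomorphism into the integral domain $\mathbb{R}[t]$, so if a product of polynomials vanishes identically on $\ell$ then one of them does. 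In particular any line in $Z(p_{sf})=\bigcup_j Z(q_j)$ lies in a single $Z(q_j)$, so it suffices to bound, for each fixed $j$, the number of critical lines of $Z$ contained in $Z(q_j)$.

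Fix $j$ and write $p_{sf}=q_j\,m$, $m:=\prod_{k\neq j}q_k$. Let $\ell\subseteq Z(q_j)$ be a critical line, so $\nabla p_{sf}\equiv 0$ on $\ell$. For each coordinate $i$ we have $\partial_i p_{sf}=(\partial_i q_j)\,m+q_j\,(\partial_i m)$, which on $\ell$ (where $q_j\equiv 0$) reduces to $(\partial_i q_j)\,m$; as this vanishes identically on $\ell$, the integral-domain remark gives that, for each $i$, either $\partial_i q_j\equiv 0$ on $\ell$ or $m\equiv 0$ on $\ell$. If $m\equiv 0$ on $\ell$ then $\ell\subseteq Z(q_k)$ for some $k\neq j$, hence $\ell\subseteq Z(q_j)\cap Z(q_k)$; since the distinct irreducibles $q_j,q_k$ have no common factor, Theorem~\ref{bezout for lines} bounds the number of such lines by $\deg q_j\deg q_k$. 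Otherwise $\partial_i q_j\equiv 0$ on $\ell$ for every $i$; since $q_j$ is non-constant over a field of characteristic zero, some $\partial_{i_0}q_j\not\equiv 0$, and as $q_j$ is irreducible with $\deg\partial_{i_0}q_j<\deg q_j$ the pair $q_j,\partial_{i_0}q_j$ has no common factor, so Theorem~\ref{bezout for lines} bounds the number of lines in $Z(q_j)\cap Z(\partial_{i_0}q_j)$ — and a fortiori the number of such $\ell$ — by $\deg q_j(\deg q_j-1)$.

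Adding the two contributions, the number of critical lines of $Z$ lying in $Z(q_j)$ is at most $\deg q_j(\deg q_j-1)+\sum_{k\neq j}\deg q_j\deg q_k=\deg q_j\,(S-1)$. Summing over $j$ — which at worst double-counts lines contained in two of the $Z(q_j)$, harmless for an upper bound — gives a total of at most $\sum_j\deg q_j\,(S-1)=S(S-1)\leq d(d-1)\leq d^2$ critical lines, as required.

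The step I expect to be the crux is the dichotomy in the middle paragraph: one has to be sure that criticality of $\ell$ really does confine it to a pair of surfaces sharing no common factor, so that Theorem~\ref{bezout for lines} is applicable. This is exactly where square-freeness of $p_{sf}$ is used — if $p$ had a repeated irreducible factor, $\nabla p$ would vanish on an entire two-dimensional component of $Z$ and no such bound could hold. The remaining verifications (that the various polynomial pairs are coprime, and that the degrees behave as claimed) are routine.
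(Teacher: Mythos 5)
Your proof is correct, and it is exactly the argument the paper has in mind: the paper gives no proof of this proposition (it simply cites Guth--Katz and remarks that the bound "can easily be controlled by Theorem~\ref{bezout for lines}"), and your factor-by-factor reduction to coprime pairs $(q_j,q_k)$ and $(q_j,\partial_{i_0}q_j)$, followed by an application of Theorem~\ref{bezout for lines}, is the standard Guth--Katz argument. The bookkeeping $\sum_j \deg q_j\,(S-1)=S(S-1)\leq d^2$ and the treatment of the degenerate cases (plane factors, $r=1$) are all in order.
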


Flat points of a variety are points where the second fundamental form of the variety vanishes. And these, in turn, are points where specific appropriate polynomials simultaneously vanish. Using this fact, Theorem~\ref{bezout for lines} also yields control on the number of flat lines inside a variety.

\begin{proposition}\label{few flat lines} \textbf{\emph{(Elekes--Kaplan--Sharir \cite{EKS})}} Let $Z$ be the zero set of a non-zero $p\in \mathbb{R}[x_1, x_2, x_3]$. If a line $l$ in $\mathbb{R}^3$ contains at least $3d-3$ flat points of $Z$, then $l$ is a flat line of $Z$. Moreover, at most $3d^2-4d$ flat lines do not fully lie inside planes contained in $Z$.
\end{proposition}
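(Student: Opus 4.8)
The plan is to detect the flat points of $Z$ as those regular points at which an explicit finite collection of polynomials of degree at most $3d-4$ vanishes simultaneously, and then to feed this into the B\'ezout-type bound of Theorem~\ref{bezout for lines}. Writing $g=\nabla p_{sf}$ throughout, the first thing I would record is the differential-geometric fact underlying the definition of a flat point: if $x$ is a regular point of $Z$ (so $g(x)\neq 0$) lying on three coplanar lines of $Z$, then the second fundamental form of $Z$ at $x$ vanishes. Indeed, each such line lies in $Z=\{p_{sf}=0\}$ and passes through the regular point $x$, so the derivative of $p_{sf}$ along its direction $v$ is zero at $x$, i.e.\ $v\in g(x)^{\perp}$; since the three lines are distinct, concurrent and coplanar, these are three distinct directions in the two-dimensional space $g(x)^{\perp}$; and a straight line contained in a surface has vanishing normal curvature, so the second fundamental form -- a quadratic form on $g(x)^{\perp}$ -- vanishes on three distinct directions and is therefore identically zero. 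Hence every flat point of $Z$ is a regular point of $Z$ at which the second fundamental form vanishes.

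Next I would construct the polynomial witnesses. Writing also $H=\nabla^{2}p_{sf}$, at a regular point $x$ the second fundamental form of $Z$ is, up to a non-zero scalar, the restriction of the quadratic form $v\mapsto v^{T}H(x)v$ to $g(x)^{\perp}$; by polarisation this vanishes on $g(x)^{\perp}$ exactly when $H(x)v\in\mathrm{span}\,g(x)$ for every $v\perp g(x)$, and since $\{g(x)\times w:w\in\mathbb{R}^{3}\}=g(x)^{\perp}$ this is equivalent to the vanishing of the matrix $M(x)$ defined by $M(x)w=g(x)\times\bigl(H(x)(g(x)\times w)\bigr)$. The entries of $M$ are polynomials of degree at most $(d-1)+(d-2)+(d-1)=3d-4$, and by the previous paragraph the flat points of $Z$ all lie in their common zero set.

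The first assertion is then immediate: if a line $l$ carries at least $3d-3$ flat points, every entry of $M$ has degree $\le 3d-4$ yet vanishes at $\ge (3d-4)+1$ points of $l$, hence vanishes identically on $l$, so $M\equiv 0$ on $l$. Since $l$ contains a regular point it is not a critical line, so $g$ is not identically zero on $l$ and only finitely many points of $l$ are critical points of $Z$; at the remaining points of $l$, which are regular points of $Z$, the vanishing of $M$ forces the second fundamental form to vanish, so $l$ is a flat line. For the second assertion I would factor $p_{sf}$ into irreducibles. Since $p$ and $p_{sf}$ have the same zero set, and since a polynomial vanishing on a two-dimensional piece of a plane is divisible by that plane's linear form (a form of the argument behind Lemma~\ref{infinite}), the planes contained in $Z$ are exactly the linear factors of $p_{sf}$; hence a flat line $l$ contained in no such plane is not contained in the union of the planar components (a line inside a finite union of planes lies in one of them), so $l\subseteq Z_{p_{j}}$ for some irreducible non-planar factor $p_{j}$, and moreover $M\equiv 0$ on $l$ because $M$ already vanishes at the cofinitely many regular flat points of $l$. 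Now an irreducible non-planar surface does not have identically vanishing second fundamental form on its (two-dimensional) smooth real locus, so $M$ does not vanish identically on $Z_{p_{j}}$: some entry $q$ of $M$ is non-zero at a generic smooth real point of $Z_{p_{j}}$ -- such a point is a regular point of $Z$ at which $Z$ coincides with $Z_{p_{j}}$ -- whence $p_{j}\nmid q$, and, $p_{j}$ being irreducible, $p_{j}$ and $q$ have no common factor. By Theorem~\ref{bezout for lines} at most $\deg p_{j}\cdot\deg q\le(\deg p_{j})(3d-4)$ lines lie in both $Z_{p_{j}}$ and $Z_{q}$, in particular at most that many flat lines lie in $Z_{p_{j}}$; summing over the non-planar factors and using $\sum_{j}\deg p_{j}\le\deg p_{sf}\le d$ gives at most $(3d-4)d=3d^{2}-4d$ flat lines contained in no plane inside $Z$.

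I expect the construction of the second paragraph to be the main obstacle: one needs a coordinate-free polynomial certificate for the vanishing of the second fundamental form of $Z$ whose degree grows only like $3d$, and the obvious candidate -- the projected Hessian $(|g|^{2}I-gg^{T})H(|g|^{2}I-gg^{T})$ -- has entries of degree $\sim 5d$, too lossy to yield the sharp constants. The cross-product device $w\mapsto g\times(H(g\times w))$ is what keeps the degree at $3d-4$ and makes the numerology $3d-3$ and $3d^{2}-4d$ come out. A subsidiary point requiring care is the real-algebraic input that an irreducible non-planar real surface cannot have everywhere-vanishing second fundamental form on its smooth locus, together with the attendant genericity statements identifying smooth points of $Z_{p_{j}}$ with regular points of $Z$.
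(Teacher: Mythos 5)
The paper does not actually prove this proposition: it is imported as a black box from Elekes--Kaplan--Sharir, so there is no internal argument to compare yours against. Your reconstruction is essentially the standard proof from \cite{EKS} and \cite{Guth_Katz_2008}: the entries of your matrix $M$, namely $e_i\cdot\bigl(g\times(H(g\times e_j))\bigr)=-(g\times e_i)^{T}H(g\times e_j)$, are exactly the degree-$(3d-4)$ flatness polynomials used there, and the two counting steps (vanishing at $\geq 3d-3$ points of a line, then B\'ezout against each irreducible factor using $\sum_j\deg p_j\leq d$) are the right ones.

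Two points need tightening. First, a minor one: in the first assertion you pass from ``$M\equiv 0$ on $l$'' to ``the non-critical points of $l$ are regular points of $Z$'', which presupposes $l\subseteq Z$; this does follow, since $p_{sf}|_l$ has degree $\leq d<3d-3$ and vanishes at all the flat points, but it should be said. Second, and more substantively: you assign a flat line $l$ to an irreducible factor $p_j$ merely via $l\subseteq Z=\bigcup_j Z_{p_j}$, and then argue at a ``generic smooth real point of $Z_{p_j}$'', implicitly assuming the smooth real locus is two-dimensional. For a factor that is irreducible over $\mathbb{R}$ but not over $\mathbb{C}$, such as $x_1^2+x_2^2$, the real zero set can be a single line with no two-dimensional locus at all; for such a factor every entry of $M$ could be divisible by $p_j$ and the B\'ezout step yields nothing. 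The fix is to choose $p_j$ through the flat-line structure rather than through mere containment: cofinitely many points of $l$ are regular points of $Z$, each such point lies on exactly one $Z_{p_i}$ and is a smooth point of it with $\nabla p_i\neq 0$, so by pigeonhole infinitely many of them lie on a single $Z_{p_j}$; this forces $l\subseteq Z_{p_j}$ \emph{and} guarantees that $Z_{p_j}$ is a genuine smooth surface near those points, after which your chain ``vanishing second fundamental form $\Rightarrow$ locally planar $\Rightarrow$ linear factor'' goes through and produces an entry $q$ of $M$ with $p_j\nmid q$. With that adjustment the argument is complete.
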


\subsection{The Szemer\'edi--Trotter theorem} The discrete Kakeya-type Theorem~ \ref{3d} is a statement on incidences between lines and joints in $\mathbb{R}^3$. To prove it, we shall use the following theorem (which, note, fails in general field settings).

\begin{theorem} \emph{\textbf{(Szemer\'edi--Trotter \cite{S-T})}} Let $S$ be a finite set of points in $\mathbb{R}^2$ and $\mathcal{L}$ a finite set of lines in $\mathbb{R}^2$. Then, if $I(S,\mathcal{L})$ denotes the number of incidences between $S$ and $\mathcal{L}$, it holds that
\begin{equation*}
    I(S,\mathcal{L})\lesssim |S|^{2/3}|\mathcal{L}|^{2/3}+|\mathcal{L}|+|S|.
\end{equation*}
In particular, for any $k \geq 2$, if $S_k$ denotes the set of points in $S$ each lying in at least $k$ and fewer than $2k$ lines of $\mathfrak{L}$, then
\begin{equation*}
    |S_k|\lesssim \frac{|\mathcal{L}|^2}{k^3}+\frac{|\mathcal{L}|}{k}.
\end{equation*}
\end{theorem}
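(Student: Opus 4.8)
The plan is to derive both assertions of the theorem from the \textbf{crossing number inequality} for graphs drawn in the plane, and I would begin by establishing that inequality: any graph $G$ drawn in the plane with $V$ vertices and $E \geq 4V$ edges has at least $\gtrsim E^3/V^2$ crossings. This is obtained in two steps. Euler's formula shows that a simple planar graph on $V \ge 3$ vertices has at most $3V - 6$ edges; deleting one edge at each crossing of an arbitrary drawing of $G$ produces a planar graph, so every graph satisfies $\mathrm{cr}(G) \geq E - 3V$. To amplify this linear bound to a cubic one, pass to the random induced subgraph $G'$ obtained by retaining each vertex independently with probability $p := 4V/E \in (0,1]$: taking expectations in $\mathrm{cr}(G') \geq E(G') - 3V(G')$ gives $p^4\,\mathrm{cr}(G) \geq p^2 E - 3pV$ (an edge survives with probability $p^2$, a crossing with probability at most $p^4$), and substituting the chosen value of $p$ rearranges this to $\mathrm{cr}(G) \gtrsim E^3/V^2$.

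Next I would feed the incidence data into this inequality. Write $m = |S|$, $n = |\mathcal{L}|$, $I = I(S,\mathcal{L})$. Discarding the lines of $\mathcal{L}$ incident to at most one point of $S$ costs at most $n$ incidences. On each remaining line $\ell$, carrying $k_\ell \geq 2$ points of $S$, join consecutive points by an edge, obtaining $k_\ell - 1 \geq k_\ell/2$ edges; the union of these edges is a graph $G$ drawn in the plane with $V = m$ vertices and $E \geq \tfrac12(I - n)$ edges. Two edges of $G$ cross only if the lines supporting them cross, and two distinct lines meet in at most one point, so $\mathrm{cr}(G) \leq \binom{n}{2} \leq n^2$. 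If $E < 4V$ then $I < 8m + n$ and we are done; otherwise the crossing number inequality gives $n^2 \gtrsim E^3/m^2$, hence $E \lesssim m^{2/3} n^{2/3}$ and $I \lesssim m^{2/3} n^{2/3} + n$. Combining the two cases yields $I(S,\mathcal{L}) \lesssim m^{2/3} n^{2/3} + m + n$, which is the first assertion.

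For the statement about $S_k$: since each point of $S_k$ lies on at least $k$ lines of $\mathcal{L}$, we have $k|S_k| \leq I(S_k,\mathcal{L}) \lesssim |S_k|^{2/3}|\mathcal{L}|^{2/3} + |S_k| + |\mathcal{L}|$ by the first assertion (applied to $S_k$ in place of $S$). If the first term on the right dominates, then $|S_k| \lesssim |\mathcal{L}|^2/k^3$; if the last term dominates, then $k|S_k| \lesssim |\mathcal{L}|$, i.e. $|S_k| \lesssim |\mathcal{L}|/k$; and if the middle term dominates, then $k \lesssim 1$, and in this bounded range the trivial injection of $S_k$ into unordered pairs of lines through a point gives $|S_k| \leq \binom{|\mathcal{L}|}{2} \lesssim |\mathcal{L}|^2 \lesssim |\mathcal{L}|^2/k^3$. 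In every case $|S_k| \lesssim |\mathcal{L}|^2/k^3 + |\mathcal{L}|/k$, as claimed.

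The main obstacle is the cubic crossing number bound itself, which requires the probabilistic amplification on top of Euler's formula; once it is available, the rest is routine double-counting. One could instead bypass the crossing number inequality via a cell decomposition of the plane (the original route of Szemer\'edi and Trotter) or via polynomial partitioning (Theorem~\ref{polynomial partitioning}), the latter being more in keeping with the methods of the present paper, but the crossing number argument is the shortest.
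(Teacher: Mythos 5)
Your proposal is correct, but there is nothing in the paper to compare it against: the paper states the Szemer\'edi--Trotter theorem purely as an imported black box, citing \cite{S-T}, and never proves either assertion. What you have written is essentially Sz\'ekely's crossing-number proof, which is a genuinely different (and much shorter) route than the cell-decomposition argument of the original paper being cited. Your chain of estimates is sound: the amplified bound $\mathrm{cr}(G)\gtrsim E^3/V^2$ for $E\geq 4V$, the simple graph on $S$ built from consecutive points along lines carrying at least two points (simple because two points determine a unique line), the bound $\mathrm{cr}(G)\leq\binom{n}{2}$, and the two cases $E<4V$ and $E\geq 4V$ combine to give $I\lesssim m^{2/3}n^{2/3}+m+n$. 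The deduction of the $|S_k|$ bound from $k|S_k|\leq I(S_k,\mathcal{L})$ is also correct, including the observation that only the lower bound ``at least $k$ lines'' is needed and that the case where the term $|S_k|$ dominates forces $k\lesssim 1$, where the trivial injection of $S_k$ into pairs of lines closes the argument. Two standard but worth-stating caveats in the crossing-number step: the inequality $\mathbb{E}[\mathrm{cr}(G')]\leq p^4\,\mathrm{cr}(G)$ requires working with a drawing in which edges sharing an endpoint do not cross (otherwise a crossing survives with probability $p^3$), which one arranges by taking a crossing-minimal drawing; and the Euler bound $E\leq 3V-6$ needs $V\geq 3$, the degenerate cases being absorbed into the $+|S|+|\mathcal{L}|$ term. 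With those remarks made explicit, your argument is a complete and self-contained proof of the quoted theorem, more elementary than polynomial partitioning and entirely adequate for the role the theorem plays in this paper.
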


\section{Proof of Theorem~\ref{3d_basic}} \label{Section 5}

As well as giving our desired discrete Kakeya estimate, Theorem~\ref{3d} below gives strong structural information on configurations of joints and lines that quasi-extremise discrete Kakeya-type inequalities.

 Let $\mathcal{L}$ be a finite family of $L$ distinct lines in $\mathbb{R}^3$, and $J$ a set of joints formed by $\mathcal{L}$. For dyadic $k \in \mathbb{N}$ let
\begin{equation*}
    J_k:=\{x\in J: \; x\text{ lies in at least }k\text{ and fewer than }2k\text{ lines in }\mathcal{L}\}.
\end{equation*}
For reasons that will become clear later, $k\sim L^{1/2}$ is a natural threshold for us. We refer to values of $k \leq c L^{1/2}$ as {\em small}
and values of $k >c L^{1/2}$ as {\em large} (any constant $c$ is allowed in this dichotomy, as long as it is fixed throughout the argument).
\begin{definition}\label{def}
     {\rm
   For each $0<\epsilon<1/2$ and each dyadic $k\lesssim L^{1/2}$ (i.e., each small dyadic $k$), we say that $k$ is \emph{good} if $J_k$ satisfies the exceptionally good estimate
     \begin{equation*}
         |J_k|k^{2-\epsilon/2}\lesssim_\epsilon L^{3/2};
     \end{equation*}
     otherwise, we say that $k$ is \emph{bad}.
     }
\end{definition}
Note that the notions of goodness and badness are $\epsilon$-dependent, but this will not be of concern to us, since $\epsilon$ will be fixed. The implicit constants in the definition of good $J_k$ should be thought of as large: they will need to be uniformly bounded below by absolute constants which will arise from the proof of Theorem~\ref{3d} -- for example the constant in the  Szemer\'edi--Trotter theorem, the constant in the joints inequality $|J|\lesssim |\mathcal{L}|^{3/2}$ in $\mathbb{R}^3$,  the constant in the simple multijoints inequality $|J|\lesssim (|\mathcal{L}_1||\mathcal{L}_2||\mathcal{L}_3|)^{1/2}$ in $\mathbb{R}^3$ (discussed in the Introduction), and constants arising as part of standard refinement processes when employing polynomial partitioning.

The result below gives precise structural information on the union of the ``bad" sets $J_k$, and asserts that they do not obstruct our desired strong discrete Kakeya inequality.

\begin{theorem}{\emph{\textbf{(Discrete Kakeya-type theorem)}}}\label{3d} For any finite set $\mathcal{L}$ of $L$ distinct lines in $\mathbb{R}^3$, the set $J$ of joints formed by $\mathcal{L}$ satisfies
\begin{equation}\label{eq:discrete kakeya'}
   \sum_{x\in J}  \left(\sum_{l\in\mathcal{L}}\chi_l(x)\right)^{3/2}\lesssim L^{3/2}.
\end{equation}
Moreover, for any $0<\epsilon< 1/2$, the set $\tilde{J}$ of joints in $J$, each of which lies in $\lesssim L^{1/2}$ lines in $\mathcal{L}$, may be decomposed as
\begin{equation*}
    \tilde{J}=J_{{\rm good}}\sqcup J_{{\rm bad}},
\end{equation*}
where $J_{\rm good}$ satisfies the exceptionally good estimate 
\begin{equation}\label{eq:excepgoood}
    \sum_{x\in J_{{\rm good}}}\left( \sum_{l\in\mathcal{L}}\chi_l(x)\right)^{2-\epsilon}\lesssim_\epsilon L^{3/2}
\end{equation}
and
\begin{equation*}
    J_{{\rm bad}}\text{ has nearly planar structure }.
\end{equation*}
In particular, we may take
\begin{equation*}
    J_{\rm good}:=\bigcup_{{\rm good \text{ }}k}J_k\text{ and }J_{\rm bad}:=\bigcup_{{\rm bad \text{ }}k}J_k.
\end{equation*}

\end{theorem}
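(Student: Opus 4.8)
The plan is to split the joints of $\mathcal{L}$ into those lying on $\lesssim L^{1/2}$ lines and those lying on $\gtrsim L^{1/2}$ lines, handle the latter easily via a bush argument, and then dyadically decompose the former according to the multiplicity $k$. For a single dyadic $k$ the contribution to the left-hand side of \eqref{eq:discrete kakeya'} is $\sim |J_k| k^{3/2}$. When $k$ is large, $k \gtrsim L^{1/2}$, the joints through many lines are essentially forced into non-interacting bushes: each such joint consumes $\gtrsim k$ distinct lines, so $|J_k| \lesssim L/k$ up to a bounded number of bushes whose lines must be largely disjoint (this is the content of the remark referenced as Remark~\ref{large_l}). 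Summing $|J_k| k^{3/2} \lesssim (L/k) k^{3/2} = L k^{1/2} \lesssim L \cdot L^{1/2} = L^{3/2}$ over dyadic $k \gtrsim L^{1/2}$ (the sum is dominated by its largest term, $k \sim L$) gives the desired bound for the large-$k$ part.

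For small $k$, the key dichotomy is Definition~\ref{def}: call $k$ good if $|J_k| k^{2-\epsilon/2} \lesssim_\epsilon L^{3/2}$, bad otherwise. For good $k$ we have, since $k \lesssim L^{1/2}$ and $2-\epsilon/2 > 3/2$, that $|J_k| k^{3/2} = (|J_k| k^{2-\epsilon/2}) k^{3/2 - (2-\epsilon/2)} \lesssim_\epsilon L^{3/2} k^{-(1/2-\epsilon/2)} \lesssim_\epsilon L^{3/2}$, and summing the geometric-type series in dyadic $k$ over good $k$ costs only a constant; this simultaneously proves \eqref{eq:excepgoood} for $J_{\rm good} = \bigcup_{{\rm good }\, k} J_k$, since $\sum_{x \in J_k}(\sum_l \chi_l(x))^{2-\epsilon} \sim |J_k| k^{2-\epsilon} \le |J_k| k^{2-\epsilon/2}$. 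Thus all that remains is to control $\sum_{x \in J_{\rm bad}} (\sum_l \chi_l(x))^{3/2} \sim \sum_{{\rm bad }\, k} |J_k| k^{3/2}$ and to establish the structural claim that $J_{\rm bad} = \bigcup_{{\rm bad }\, k} J_k$ has nearly planar structure.

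The heart of the matter — and the main obstacle — is showing that $J_{\rm bad}$ has nearly planar structure; once this is known, Lemma~\ref{lemma:nearlyplanarest} directly yields $\sum_{x \in J_{\rm bad}}(\sum_l \chi_l(x))^{3/2} \lesssim L^{3/2}$, completing \eqref{eq:discrete kakeya'}. To produce the planar structure I would run polynomial partitioning (Theorem~\ref{polynomial partitioning}) on $J_k$ at a well-chosen degree $d$ (a small power of $L/k$ or similar): joints landing in cells are few per cell and are handled by induction on $L$, while joints on the zero set $Z_p$ are analyzed via the algebraic-geometric input of Section~\ref{Section 4}. A joint on $Z$ through $k$ lines either has a definite proportion of those lines lying in $Z$ — in which case, being a regular point, it is a flat point, so by Proposition~\ref{few flat lines} it lies on a flat line or inside a plane component of $Z$, and the plane components are the candidate planes $\Pi \in \mathcal{P}$ — or else it sees many lines transverse to $Z$, a situation controlled by a multijoints-type count (the simple inequality $|J| \lesssim (|\mathcal{L}_1||\mathcal{L}_2||\mathcal{L}_3|)^{1/2}$) and by Theorem~\ref{bezout for lines}/Proposition~\ref{few critical lines} bounding lines lying in $Z$. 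The badness hypothesis on $k$ is exactly what forces the extremal, clustered-on-planes behaviour rather than the good estimate; the delicate points will be (i) verifying properties P1) and P2) of Definition~\ref{definition: planar structure} — in particular the disjointness P2), which the forbidden-configuration discussion in Subsection~\ref{ppp} shows is a genuine constraint — and (ii) patching the per-$k$ structures into a single planar structure on $\bigcup_k J_k'$, using Szemerédi–Trotter to refine away the $O(L/k + L^2/k^3)$ pathological joints on flat lines and on intersections of plane components so that the retained subset $J_k' \subseteq J_k$ still satisfies $|J_k'| \sim |J_k|$. Carefully tracking constants through these refinements (as flagged after Definition~\ref{def}) and setting up the induction on $L$ so that the partitioning degree and the $\epsilon$-dependent thresholds close up is where most of the work lies.
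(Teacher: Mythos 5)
Your overall architecture matches the paper's: dyadic decomposition in $k$, the good/bad dichotomy of Definition~\ref{def} disposing of good $k$ and of $k\gtrsim L^{1/2}$ via Szemer\'edi--Trotter, polynomial partitioning of $J_k$ for bad $k$, identification of flat points lying in planes inside $Z_k$, and Lemma~\ref{lemma:nearlyplanarest} to convert nearly planar structure into the Kakeya estimate. However, there is a genuine gap at the step you correctly flag as delicate: patching the per-$k$ planar structures into a single nearly planar structure for $\bigcup_{\mathrm{bad}\,k}J_k$. You propose to do this by ``using Szemer\'edi--Trotter to refine away the $O(L/k+L^2/k^3)$ pathological joints.'' This cannot work as stated: for bad $k$ the Szemer\'edi--Trotter bound $|J_k|\lesssim L^2/k^3$ is essentially saturated (badness means $|J_k|k^{2-\epsilon/2}\gtrsim L^{3/2}$, which is only slightly below $L^2/k^3$ when $k\lesssim L^{1/2}$), so discarding $O(L^2/k^3)$ joints could discard all of $J_k$ and destroy the requirement $|J_k'|\sim|J_k|$. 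The paper's Step 5 is an entirely different and more delicate mechanism: it introduces a total order $\prec$ on the bad $k$'s by the size of $k^\epsilon d_k$ (where $d_k\sim L^2/(|J_k|k^3)$ is the partitioning degree), and proves Claim~\ref{interaction} that for each bad $k$ only a small proportion of joints of $\bar J_k$ lie on a line shared with some $\mathcal{L}^{k'}$, $k'\prec k$, with a different associated plane. The proof of that claim is an incidence dichotomy against the variety $Z_{k'}$: either the $\sim k$ lines through the problematic joints mostly leave $Z_{k'}$ (so each meets it at most $d_{k'}$ times, bounding incidences by $Ld_{k'}$), or they mostly lie in $Z_{k'}$ inside the plane $\Pi^k_\ell$, which forces that plane to contain fewer than $d_{k'}$ such lines lest it be a component of $Z_{k'}$; either way one gets $|J_k|k\lesssim_\epsilon Lk'^\epsilon d_{k'}\le Lk^\epsilon d_k$, contradicting badness. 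Without this (or an equivalent) interaction argument between the varieties $Z_k$ and $Z_{k'}$, the structural claim for $J_{\mathrm{bad}}$ does not follow from the per-$k$ structures, as the forbidden configuration of Figure~\ref{fig: obstruction to planar structure} shows.

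A secondary issue: you propose to handle the cellular case of the partitioning ``by induction on $L$.'' It is unclear how such an induction would close, since the statement being proved for bad $k$ is structural (existence of a large subset with planar structure subordinate to planes in $Z_k$), not merely a cardinality bound, and the lines entering a cell form a sub-family for which the relevant $k$ need not remain bad. The paper avoids this entirely by choosing the degree $d_k=AL^2/(|J_k|k^3)$ with $A$ large and showing the cellular case is outright impossible: a typical cell would contain $\sim|J_k|/d_k^3$ joints and be crossed by $\lesssim L/d_k^2$ lines, and Szemer\'edi--Trotter inside that cell forces $d_k\lesssim L^2/(|J_k|k^3)$ with a constant independent of $A$, a contradiction. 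You should adopt this (or some equally concrete) route rather than an unspecified induction.
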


\textbf{Structure of the proof of Theorem~\ref{3d}.}
The proof of Theorem~\ref{3d} is rather involved, and so we outline its six principal steps. We first fix $\epsilon\in (0,1/2)$. We make the preliminary observation that with $\mathcal{L}$, $J$ and $J_k$ as in the statement of the theorem, the desired inequality \eqref{eq:discrete kakeya'} becomes
\begin{equation}\label{eq:discrete kakeya}
\sum_k |J_k|k^{3/2}\lesssim L^{3/2}
\end{equation}
and the exceptionally good estimate  \eqref{eq:excepgoood} becomes 
\begin{equation}\label{eq:excepgood_k}
\sum_{\text{good } k} |J_k|k^{2- \epsilon}\lesssim_\epsilon L^{3/2},
\end{equation}
in which expressions, and in all to follow, only dyadic $k$ are considered. We observe that \eqref{eq:excepgood_k} is a direct consequence of the definition of goodness: we have
\begin{equation*}
  \sum_{\text{good }k}|J_k|k^{2-\epsilon}= \sum_{\text{good }k}\frac{|J_k|k^{2-\epsilon/2}}{k^{\epsilon/2}}\lesssim_\epsilon L^{3/2}\sum_k\frac{1}{k^{\epsilon/2}}\lesssim_\epsilon L^{3/2}.
\end{equation*}
Moreover, the estimate
\begin{equation*}
    \sum_{k\gtrsim L^{1/2}} |J_k|k^{3/2}\lesssim L^{3/2}
\end{equation*}
immediately follows from the Szemer\'edi--Trotter theorem (see Step 6 for details). Thus, in order to prove \eqref{eq:discrete kakeya} it suffices to consider bad $k$ only. In fact, as we have mentioned already, the key difficulty is obtaining the structural statement, and we focus on this in the first five steps of the proof. In the final Step 6 we use the structural statement to complete the proof of \eqref{eq:discrete kakeya}. 

To establish the structural statement we need to show that $J_{\text{bad}}=\bigcup_{\text{bad }k}J_k$ has nearly planar structure.

We shall first focus on the contributions to $J_{\text{bad}}$ coming from joints in each individual $J_k$; interactions between different values of $k$ come into play only when we begin to expose the planar structure in Step 5.


In \textbf{Step 1} we use polynomial partitioning with suitable parameters to partition $J_k$ (for bad $k$) into those points lying in a variety $Z_k$, and those lying in $\mathbb{R}^3 \, \setminus \, Z_k$. 

By the end of Step 4, we will have shown that $J_k$ has nearly planar structure; and, crucially, that the bulk of the lines incident to most of the joints in $J_k$ lie in planes inside $Z_k$. Then, in Step 5, the interaction between the various partitioning varieties $Z_k$ (corresponding to all bad $k$) will be studied in order to show that $\bigcup_{\text{bad }k} J_k$ has nearly planar structure.

In \textbf{Step 2} we specify the choice of parameters from Step 1 in order to ensure that a definite proportion of the points of $J_k$ in fact lie in $Z_k$, thus reducing matters to the ``algebraic" case. Steps 1 and 2 essentially feature as part of the analysis in \cite{Guth_Katz_2010}, and we do not claim any originality here.

In \textbf{Step 3} we begin to explore structures within the set $J_k$ of joints and the set of lines forming them. In particular, we show that a definite proportion $\bar{J}_k$ of the joints in $J_k$ are regular points of ${Z_k}$ and, crucially, live inside planes contained in ${Z_k}$. These planes 
are further shown to contain the bulk of the lines forming each joint in $\bar{J}_k$.

In \textbf{Step 4} we easily deduce that the set $\bar{J}_k$ identified in Step 3 has planar structure. Note that this implies that $J_k$ has nearly planar structure for every bad $k$.

\textbf{Step 5} is the central step in our analysis and represents the heart of the matter: showing that $J_{\text{bad}}=\bigcup_{\text{bad }k}J_k$ has nearly planar structure. While in general a union of sets with planar structure needs not have planar structure (see the discussion in Section~\ref{ppp}), we show in this step that the sets $\bar{J}_k$ in turn have subsets 
of definite proportion, whose corresponding union has planar structure. In other words, the set $\bigcup_{\text{bad } k}\bar{J}_k$ has nearly planar structure, and therefore $J_{\text{bad}} $ has nearly planar structure too. We begin the argument in Step 5a by describing potential obstructions to nearly planar structure. Such obstructions naturally motivate the study of the interaction of the various varieties $Z_k$ for bad $k$ (as these varieties, and more precisely the planes inside them, carry the joints in the various sets $\bar{J}_k$, and most of the lines forming them). This study is undertaken in Step 5b and allows us to establish the crucial Claim~\ref{interaction}. In particular, this claim allows us in Step 5c to assert that no potential obstruction to nearly planar structure can actually succeed.


Finally, in \textbf{Step 6}, we establish the free-standing Lemma~\ref{lemma:nearlyplanarest} which shows that inequality \eqref{eq:discrete kakeya} (and therefore inequality \eqref{eq:discrete kakeya'}) holds 
in the presence of nearly planar structure. We use this, together with \eqref{eq:excepgood_k}, and easy arguments for large $k$ (i.e., $k\gtrsim L^{1/2}$), to finally establish \eqref{eq:discrete kakeya} in the general case.

We now give the details.

\begin{proof} Let $\epsilon\in (0,1/2)$. For the first five steps of the proof, we focus only on (small) bad $k$.

\textbf{Step 1: Partitioning }$\boldsymbol{J_k}$ \textbf{for each bad} $\boldsymbol{k}.$ Since $k\lesssim L^{1/2}$, the Szemer\'edi--Trotter theorem asserts that
\begin{equation*}
    |J_k|\lesssim \frac{L^2}{k^3}.
\end{equation*}
Therefore, for an appropriately large constant $A>0$ (independent of $k$ and $\epsilon$) which will be specified in Step 2, the quantity 
\begin{equation*}
    d_k:=A\frac{L^2}{|J_k|k^3}
\end{equation*}
is larger than 1. It follows by the polynomial partitioning Theorem~ \ref{polynomial partitioning} that there exists a non-zero $p_k\in \mathbb{R}[x_1,x_2,x_3]$,
with $\deg p_k\leq d_k$, whose zero set $Z_k$ splits 
$\mathbb{R}^3$ in $\sim d_k^3$ cells, each containing $\lesssim \frac{|J_k|}{d_k^3}$ elements of $J_k$.

\textbf{Step 2: Reducing to the joints in $\boldsymbol{Z_k}$.} Either $\gtrsim |J_k|$ elements of $J_k$ lie in the union of the cells (the cellular case) or $\gtrsim |J_k|$ elements of $J_k$ lie in $Z_k$ (the algebraic case). However, the constant $A$ will be fixed to be large enough for the cellular case to be impossible; thus, the algebraic case will hold. 

More precisely, suppose that the cellular case holds. The following claim holds -- its proof is a standard counting argument, and is included here for purposes of  self-containment.

\begin{claim}In the cellular case, there exists a cell $C$ such that
\begin{equation}\label{eq:fair share}
    |J_k\cap C|\sim \frac{|J_k|}{d_k^3}\text{ and }|\mathcal{L}_C|\lesssim \frac{L}{d_k^2},
\end{equation}
where $\mathcal{L}_C$ is the set of lines in $\mathcal{L}$ that cross $C$.
\end{claim}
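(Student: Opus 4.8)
We are in the cellular case, so $\gtrsim |J_k|$ of the joints lie in the union of the $\sim d_k^3$ cells, each of which contains $\lesssim |J_k|/d_k^3$ joints. A dyadic pigeonholing argument then shows that a definite proportion of the cells — say $\gtrsim d_k^3$ of them — are ``rich'', i.e. each contains $\sim |J_k|/d_k^3$ joints of $J_k$; this gives the first estimate in \eqref{eq:fair share}. It remains to locate, among these rich cells, one which is crossed by few lines. The point is that a line $l \in \mathcal{L}$ can cross at most $\deg p_k + 1 \lesssim d_k$ of the cells (since $l$, if not contained in $Z_k$, meets $Z_k$ in at most $\deg p_k$ points, and consecutive cells along $l$ are separated by points of $Z_k$; lines contained in $Z_k$ cross no cell at all). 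Therefore $\sum_{C}|\mathcal{L}_C| \lesssim L\, d_k$, the sum being over all $\sim d_k^3$ cells.

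First I would combine these two facts. Let $\mathcal{C}_{\mathrm{rich}}$ denote the collection of rich cells, so $|\mathcal{C}_{\mathrm{rich}}| \gtrsim d_k^3$. Since $\sum_{C \in \mathcal{C}_{\mathrm{rich}}} |\mathcal{L}_C| \leq \sum_{\text{all } C} |\mathcal{L}_C| \lesssim L\, d_k$, by averaging over the $\gtrsim d_k^3$ rich cells there exists a rich cell $C$ with
\begin{equation*}
|\mathcal{L}_C| \lesssim \frac{L\, d_k}{d_k^3} = \frac{L}{d_k^2}.
\end{equation*}
For such a cell $C$ we have both $|J_k \cap C| \sim |J_k|/d_k^3$ (richness) and $|\mathcal{L}_C| \lesssim L/d_k^2$, which is exactly \eqref{eq:fair share}.

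The only genuine input beyond routine pigeonholing is the bound that each line crosses $\lesssim d_k$ cells. This is elementary in $\mathbb{R}^3$: parametrise $l$ by $\mathbb{R}$; if $l \not\subseteq Z_k$ then $p_k|_l$ is a nonzero univariate polynomial of degree $\leq d_k$, hence has $\leq d_k$ roots, so $l \setminus Z_k$ has $\leq d_k + 1$ connected components, and each cell of the partition meeting $l$ does so in a subset of one such component (cells being connected and disjoint from $Z_k$); if $l \subseteq Z_k$ then $l$ is disjoint from every cell. Either way $l$ meets $\lesssim d_k$ cells. (This is the standard fact underpinning the cellular estimate in \cite{Guth_Katz_2010}, and is precisely why Steps 1–2 are attributed there.)

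**Main obstacle.** There is no serious obstacle here — this is a bookkeeping lemma. The one point requiring a little care is to make the two pigeonholing steps compatible: one must perform the ``rich cell'' selection and the ``few lines'' selection simultaneously over the \emph{same} sub-collection of cells, which is why I phrase richness as a property of $\gtrsim d_k^3$ cells first and then average $|\mathcal{L}_C|$ only over those. The numerics are forgiving because the number of rich cells and the total cell count are comparable, so no loss is incurred; in particular the implicit constants here feed harmlessly into the ``large absolute constants'' discussed after Definition~\ref{def}.
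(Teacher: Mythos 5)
Your proof is correct and follows essentially the same route as the paper: a counting/pigeonholing step produces $\gtrsim d_k^3$ cells each containing $\sim |J_k|/d_k^3$ joints, and the bound $\sum_C|\mathcal{L}_C|\lesssim Ld_k$ (from the fact that a line not in $Z_k$ meets at most $d_k+1$ cells, while a line in $Z_k$ meets none) then yields a rich cell with $|\mathcal{L}_C|\lesssim L/d_k^2$. The only cosmetic difference is that you average the line count directly over the rich cells, whereas the paper first shows that all but a small proportion of cells satisfy $|\mathcal{L}_C|\lesssim L/d_k^2$ and then intersects that collection with the rich cells; the two bookkeeping schemes are interchangeable.
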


\begin{proof} In the cellular case, there is some absolute constant $0 <a <1$ such that at least $a|\mathcal{C}|$ of the cells $C$ satisfy 
\begin{equation*}
    |J_k\cap C|\lesssim \frac{|J_k|}{d_k^3}.
\end{equation*}
Indeed, by the polynomial partitioning Theorem~\ref{polynomial partitioning}, $|J_k\cap C|\leq |J_k|/|\mathcal{C}|$ for each cell $C$ (where $\mathcal{C}$ denotes the set of cells and is $\sim d_k^3$). Combining this with the fact that $\sum_C|J_k\cap C|\geq \bar{c} |J_k|$ for some absolute constant $\bar{c}$ (since the cellular case holds), one obtains that at least $a|\mathcal{C}|$ of the cells $C$ satisfy $|J_k\cap C|\geq c |J_k|/|\mathcal{C}|$, for an appropriately small constant $c$. Indeed, otherwise the cells that satisfy $|J_k\cap C|\geq c |J_k|/|\mathcal{C}|$ contribute fewer than $a|C|\cdot |J_k|/|\mathcal{C}|=a|J_k|$ joints in total, while the remaining cells contribute fewer than $|\mathcal{C}|\cdot c|J_k|/|\mathcal{C}|=c|J_k|$ joints in total. Therefore, the cells contribute fewer than $(a+c)|J_k|$ joints in total, which is a contradiction for appropriately small $a$ and $c$.

On the other hand, at least $(1-a)|\mathcal{C}|$ of the cells $C$ satisfy that
\begin{equation*}
    |\mathcal{L}_C|\leq H \frac{L}{d_k^2},
\end{equation*}
for some large absolute constant $H$. Indeed, if the above fails, then at least $a|\mathcal{C}|$ of the cells $C$ are each crossed by at least $H L/d_k^2$ lines in $\mathcal{L}$. Since $|\mathcal{C}|\sim d_k^3$, it follows that
\begin{equation*}
    \sum_{C\in\mathcal{C}}|\mathcal{L}_C|>100Ld_k.
\end{equation*}
This is a contradiction; indeed, a line $l$ in $\mathbb{R}^3$ cannot cross more than $d_k+1$ cells (as otherwise $l$ would intersect $Z_k$ more than $d_k$ times, and would thus lie in $Z_k$, which would imply that $l$ crosses 0 cells). Therefore, $\sum_{C\in\mathcal{C}}|\mathcal{L}_C|=\sum_{l\in\mathcal{L}}\#\{\text{cells that }l\text{ crosses}\}\leq L(d_k+1)\leq 2Ld_k$, contradicting the earlier estimate.

By pigeonholing, there exists a cell $C$ that satisfies the statement of the claim.
\end{proof}

Fix a cell $C$ that satisfies \eqref{eq:fair share}. It holds that $|J_k\cap C|>k$. Indeed, if $|J_k\cap C|\leq k$ then \eqref{eq:fair share} implies that $\frac{|J_k|}{d_k^3}\lesssim k$, or equivalently $|J_k|k^2\lesssim L^{3/2}$ (recalling the definition of $d_k$). This is a contradiction because $k$ is bad (harmlessly assuming that the implicit constant in the definition of bad $k$ is sufficiently large relative to the soon-to-be-specified constant $A$). Now, since $|J_k\cap C|>k$ and since each joint in $J_k\cap C$ has $\sim k$ lines in
$\mathcal{L}$ through it, there exist at least $k+(k-1)+(k-2)+\cdots +1\gtrsim k^2$ lines in $\mathcal{L}$ crossing the cell $C$. That is, $|\mathcal{L}_C|\gtrsim k^2$, or equivalently $k\lesssim |\mathcal{L}_C|^{1/2}$. 
Therefore, the Szemer\'edi--Trotter theorem applied to count incidences between $J_k\cap C$ and $\mathcal{L}_C$ gives that
\begin{equation*}
     |J_k\cap C|\lesssim \frac{|\mathcal{L}_C|^2}{k^3},
\end{equation*}
which, by the bounds \eqref{eq:fair share} on $|J_k\cap C|$ and $|\mathcal{L}_C|$, implies that
\begin{equation*}
    \frac{|J_k|}{d_k^3}\lesssim \frac{1}{k^3}\left(\frac{L}{d_k^2}\right)^2.
\end{equation*}
Rearranging the above, it follows that
\begin{equation*}
    d_k\lesssim \frac{L^2}{|J_k|k^3}
\end{equation*}
for an implicit constant independent of $A$. Fixing $A$ to be a constant larger than this implicit one (which itself is absolute), one obtains a contradiction, and therefore concludes that the cellular case does not occur.

Since the cellular case does not occur, the algebraic case holds, hence one may assume without loss of generality that
\begin{equation*}
    J_k\subseteq Z_k.
\end{equation*}

\textbf{Step 3: Reducing to a set $\boldsymbol{\bar{J}_k}$ of joints, with $\boldsymbol{|\bar{J}_k|\sim |J_k|}$, such that
    \begin{itemize}
    \item each $\boldsymbol{x\in \bar{J}_k}$ is a regular point of $\boldsymbol{Z_k}$, and
    \item  each $\boldsymbol{x\in \bar{J}_k}$ lives in a plane $\boldsymbol{\Pi}$ contained in $\boldsymbol{Z_k}$, and $\boldsymbol{\Pi}$ contains $\boldsymbol{\sim k}$ lines in $\boldsymbol{\mathcal{L}}$ through $\boldsymbol{x}$.
    \end{itemize}
 }
Let $\mathcal{P}_k$ be the set of planes inside $Z_k$.\footnote{Non-emptiness of $\mathcal{P}_k$ is not {\em a priori} obvious, however it will follow as a result of our arguments.}

Denote by $\mathcal{L}_{cr,k}$ the set of critical lines in
$\mathcal{L}$, by $\mathcal{L}'_{fl,k}$ the set of flat lines in 
$\mathcal{L}$ that do not lie inside planes of $\mathcal{P}_k$, and by
$\mathcal{L}_{fl,k}$ the set of flat lines in $\mathcal{L}$ that lie inside planes of $\mathcal{P}_k$. The next claim will allow us to assert in the following step that a definite proportion of $J_k$ has planar structure.

\begin{claim}\label{planar k} There exists $\bar{J}_k\subseteq J_k$, with $|\bar{J}_k|\gtrsim |J_k|$, such that each joint in $\bar{J}_k$ is a flat point of $Z_k$, lying in $\sim k$ lines in $\mathcal{L}_{fl,k}$.
\end{claim}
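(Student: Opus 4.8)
The plan is to start from the situation secured in Step~2 — namely $J_k\subseteq Z_k$ with $\deg p_k\le d_k\sim L^2/(|J_k|k^3)$, and every joint in $J_k$ incident to $\sim k$ lines of $\mathcal{L}$ — and to peel off a negligible proportion of joints in three successive reductions until only flat points lying on flat lines inside planes of $Z_k$ remain. The first reduction passes to joints incident to many lines of $\mathcal{L}$ that \emph{lie in} $Z_k$. A line of $\mathcal{L}$ carrying more than $d_k$ points of $J_k$ is forced into $Z_k$, since a line meeting $Z_{p_k}$ in more than $\deg p_k$ points lies in it; the remaining ``poor'' lines, each carrying at most $d_k$ points of $J_k$, contribute at most $Ld_k$ point--line incidences in total. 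Because $k$ is bad, $|J_k|\gtrsim_\epsilon L^{3/2}k^{-2+\epsilon/2}$ with an implicit constant we may take as large as we wish (cf.\ Definition~\ref{def}), and a short computation then gives $Ld_k\ll|J_k|k$; since the total number of incidences between $J_k$ and $\mathcal{L}$ is $\sim|J_k|k$, the rich lines account for almost all of them, so a definite proportion $J_k^{(1)}\subseteq J_k$ of the joints are each incident to $\gtrsim k$ lines of $\mathcal{L}$ contained in $Z_k$, while still being incident to $\sim k$ lines of $\mathcal{L}$ in total.

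Next I would discard the joints in $J_k^{(1)}$ that are \emph{critical} points of $Z_k$. The critical locus of $Z_k$ is at most one-dimensional: it consists of at most $\sim d_k^2$ critical lines (Proposition~\ref{few critical lines}) together with at most $\sim d_k^2$ isolated critical points (a B\'ezout bound on the common zeros of $p_{k,\mathrm{sf}}$ and its partials). Observe that distinct joints lying on a common line $m$ have disjoint families of transversal lines of $\mathcal{L}$ (a line of $\mathcal{L}$ meeting $m$ twice would equal $m$), so any line carries $\lesssim L/k$ joints. Feeding this, together with the bound $d_k\lesssim L^{1/2}$ (valid for bad $k$, since $d_k>1$ and badness are only jointly consistent once $k\lesssim L^{1/(2+\epsilon)}$), into the lower bound on $|J_k|$ forced by badness, one shows that the joints on critical lines and the isolated critical joints together form a negligible fraction of $|J_k|$. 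Let $J_k^{(2)}\subseteq J_k^{(1)}$ denote what survives: each such joint is a regular point of $Z_k$ incident to $\gtrsim k$ lines of $\mathcal{L}\cap Z_k$.

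Now comes the geometric heart. At a regular point $x\in J_k^{(2)}$, every line of $Z_k$ through $x$ lies in the tangent plane $T_xZ_k$; hence the $\gtrsim k$ lines of $\mathcal{L}\cap Z_k$ through $x$ are coplanar, lying in a single plane $\Pi_x$ through $x$, and (as $\gtrsim k\ge 3$) $x$ is a \emph{flat} point of $Z_k$. It remains to upgrade ``coplanar lines of $Z_k$'' to ``flat lines of $Z_k$ lying in planes of $Z_k$'', i.e.\ to lines of $\mathcal{L}_{fl,k}$. The key sub-claim is that $\Pi_x\subseteq Z_k$: once this is known, $\Pi_x\in\mathcal{P}_k$, every line of $\mathcal{L}$ in $\Pi_x$ through $x$ is automatically a flat line of $Z_k$ contained in a plane of $Z_k$ (a plane component of a variety is flat away from finitely many singular points), and so all $\gtrsim k$ of these lines lie in $\mathcal{L}_{fl,k}$. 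To obtain $\Pi_x\subseteq Z_k$ one wants $p_{k,\mathrm{sf}}|_{\Pi_x}$, a bivariate polynomial of degree $\le d_k$, to vanish on more than $d_k$ lines; when $\gtrsim k$ already exceeds $d_k$ this is immediate, and otherwise one performs a further refinement (discarding the few joints incident to many of the $\le 3d_k^2$ flat lines of $Z_k$ not lying in planes of $Z_k$, via Proposition~\ref{few flat lines}, and discarding joints whose incident lines fail the flat-line test of Proposition~\ref{few flat lines}) so that the remaining joints have enough coplanar lines of $Z_k$ through them to force the plane in. The surviving set is $\bar J_k$, and $|\bar J_k|\gtrsim|J_k|$ by construction.

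The main obstacle is precisely the quantitative bookkeeping in the second and third reductions: bounding the joints supported on critical lines, on flat lines not contained in planes of $Z_k$, and establishing $\Pi_x\subseteq Z_k$. Each of these requires balancing the Guth--Katz and Elekes--Kaplan--Sharir line counts (all of size $\sim d_k^2$), and the degree threshold $d_k$, against the lower bound on $|J_k|$ supplied by badness of $k$; making this balance close \emph{uniformly over all small bad $k$} — in particular for $k$ bounded while $L$ is large — is the delicate point, and is exactly where the strength of Definition~\ref{def}, together with the freedom to take its implicit constant as large as needed relative to all the absolute constants appearing above, is indispensable.
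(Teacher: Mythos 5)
Your first reduction (rich lines of $\mathcal{L}$ are forced into $Z_k$, poor lines contribute $\le Ld_k\ll|J_k|k$ incidences by badness) matches the paper's opening refinement. After that, however, there are genuine gaps. First, the assertion that the critical locus of $Z_k$ consists of $\lesssim d_k^2$ critical lines plus $\lesssim d_k^2$ isolated points is false: for a reducible $p_{k,sf}=p_1p_2$ the whole intersection curve $Z_{p_1}\cap Z_{p_2}$ consists of critical points, and this is in general a non-linear curve of degree up to $d_k^2$, so the critical set need not decompose as lines plus finitely many points. The paper never tries to bound the critical \emph{locus}; it instead runs a second refinement showing that the lines of a further-refined family $\mathcal{L}''$ each contain $>d_k$ critical points or $>3d_k-3$ flat points of $Z_k$ and hence are themselves critical or flat \emph{lines}, which is what makes Propositions~\ref{few critical lines} and \ref{few flat lines} applicable. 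Second, and more fatally, even granting a reduction to joints supported on the $\lesssim d_k^2$ bad lines (critical lines and flat lines not lying in planes of $Z_k$), your proposed count — $(\#\text{bad lines})\times(\#\text{joints per line})\lesssim d_k^2\cdot L/k$ — needs $|J_k|\gtrsim L^{5/3}/k^{7/3}$ to be negligible, whereas badness only supplies $|J_k|\gtrsim L^{3/2}/k^{2-\epsilon/2}$; these are compatible only when $k\gtrsim L^{1/(2+3\epsilon)}$, so the bookkeeping you correctly flag as ``the delicate point'' genuinely fails for small bad $k$. The paper closes exactly this gap with an ingredient absent from your proposal: the multijoints estimate $|J|\lesssim(|\mathcal{L}_1||\mathcal{L}_2||\mathcal{L}_3|)^{1/2}$ applied to two copies of the $\lesssim d_k^2$ bad lines together with all of $\mathcal{L}$, which bounds the joints having two or more bad lines through them by $d_k^2L^{1/2}$ — and $d_k^2L^{1/2}\ll|J_k|$ is \emph{equivalent} to badness of $k$.

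Your mechanism for $\Pi_x\subseteq Z_k$ also cannot work as described: a joint in $J_k$ lies on fewer than $2k$ lines of $\mathcal{L}$, and $d_k\sim L^2/(|J_k|k^3)$ is typically much larger than $k$ (e.g.\ $d_k\sim L^{1/2}/k$ at the borderline of badness), so one can never accumulate more than $d_k$ coplanar lines through a single point to force $p_{k,sf}|_{\Pi_x}\equiv 0$. In the paper the planes contained in $Z_k$ are produced by Proposition~\ref{few flat lines} itself — all but $\lesssim d_k^2$ flat lines already lie in planes of $\mathcal{P}_k$ — and the final passage from ``$\sim k$ lines in $\mathcal{L}_{fl,k}$ through $x$'' to ``$x$ is a flat (regular) point'' is handled by noting that a flat line contains at most $d_k$ critical points, so if a definite proportion of the surviving joints were critical, some flat line would carry $\gtrsim|J_k|k/L\gg d_k$ of them, again contradicting badness.
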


\begin{proof}[Proof of Claim~\ref{planar k}.]  The proof combines a refinement process with a multijoints estimate. 

$\bullet$ We begin with an elementary refinement argument, which shows that, for a definite proportion of the joints in $J_k$, $\sim k$ of the lines in $\mathcal{L}$ through each \textit{lie in $Z_k$}.

Indeed, let $\mathcal{L}'$ be the set of lines in $\mathcal{L}$ each containing $\geq\frac{1}{100} \frac{|J_k|k}{L}$ elements of $J_k$ (i.e., at least the average number of joints). It is easy to see that the lines in $\mathcal{L}'$ are responsible for a definite proportion of the incidences between $J_k$ and $\mathcal{L}$; therefore, 
\begin{equation*}\text{there exist }\gtrsim |J_k|\text{ joints in }J_k\text{ with }\sim k\text{ lines of }\mathcal{L}'\text{ through each.}
\end{equation*}
Denote by $J_k'$ the set of joints with this property.

$\bullet$ Refining once more, we obtain a definite proportion of the joints in $J_k$, such that $\sim k$ of the lines in $\mathcal{L}$ through each are \textit{critical} or \textit{flat} lines in $Z_k$.

Indeed, observe that
\begin{equation*}\frac{|J_k|k}{L}\gtrsim d_k,
\end{equation*}
as otherwise $k$ would be good. In particular, it may be assumed that each line in $\mathcal{L}'$ contains more than $d_k$ elements of $J_k$. This means that \textit{each line in }$\mathcal{L}'$ \textit{lies in }$Z_k$. 

Since $k$ may be assumed to be large enough for at least 3 lines of $\mathcal{L}'$ to pass through each element of $J_k'$, it follows that \textit{each element of $J_k'$ is either a critical or a flat point of $Z_k$}. (The coplanarity condition holds because any three lines contained in $Z_k$ which meet at a regular point must be coplanar.)

The elementary refinement argument is now repeated. More precisely, let $\mathcal{L}''$ be the set of lines in $\mathcal{L}$ each containing $\geq \frac{1}{100}\frac{|J_k'|k}{L}\gtrsim\frac{|J_k|k}{L}$  elements of $J_k'$, for an appropriately small implicit constant. The lines in $\mathcal{L}''$ are responsible for a definite proportion of the incidences between $J_k'$ and $\mathcal{L}$; therefore,
\begin{equation*}
    \text{there exist }\gtrsim |J_k|\text{ joints in }J_k'\text{ each with }\sim k\text{ lines of }\mathcal{L}''\text{ through it.}
\end{equation*}
Importantly, since each element of $J_k'$ is either critical or flat, each line in $\mathcal{L}''$ contains either $>d_k$ critical points of $Z_k$ or $>3d_k-3$ flat points of $Z_k$ (as in the first application of the refinement argument, we may again assume that the quantity $\frac{|J_k|k}{L}$ is larger than an appropriate multiple of $d_k$). Therefore, \textit{each line in }$\mathcal{L}''$ \textit{is either critical or flat}. 

Therefore, for $\gtrsim |J_k|$ joints in $J_k'$, $\sim k$ lines in $\mathcal{L}$ through each are either critical or flat.

$\bullet$ To conclude, we incorporate a multijoints estimate.

The main observation is that since by Propositions~\ref{few critical lines} and \ref{few flat lines} the lines in
$\mathcal{L}_{cr,k}\cup \mathcal{L}'_{fl,k}$ are ``few" (in particular they number $\lesssim d_k^2$), they cannot
be responsible for too many joints in $J_k$. And, therefore, a lot of lines in $\mathcal{L}_{fl,k}$ pass through each one of a definite proportion of the joints in $J_k$.

Indeed, suppose for contradiction that $\gtrsim |J_k|$ joints in
$J_k'$ have the property that each lies in at least 2 lines in
$\mathcal{L}_{cr,k}\cup \mathcal{L}'_{fl,k}$. The joints
with this property are multijoints formed by the three families
$\mathcal{L}_{cr,k}\cup \mathcal{L}'_{fl,k}$, $\mathcal{L}_{cr,k}\cup
\mathcal{L}'_{fl,k}$, $\mathcal{L}$. (Indeed, given two lines in $\mathcal{L}_{cr,k}\cup \mathcal{L}'_{fl,k}$ containing a joint $x$, there must be a third line of $\mathcal{L}$ which is not in the plane formed by these two lines, which together with the first two lines makes $x$ a multijoint.) It follows from the (classical) multijoints theorem discussed in the introduction that
$$|J_k|\lesssim \left( |\mathcal{L}_{cr,k}\cup \mathcal{L}'_{fl,k}|\cdot |\mathcal{L}_{cr,k}\cup \mathcal{L}'_{fl,k}|\cdot L\right)^{1/2}\lesssim (d_k^2\cdot d_k^2\cdot L)^{1/2}\sim d_k^2 L^{1/2},
$$
a contradiction, since $k$ is bad. 

Therefore, $\gtrsim |J_k|$ joints in $J_k'$ have the property that each lies in $\sim k$ lines in $\mathcal{L}_{fl,k}$.

Now, if $\gtrsim |J_k|$ of the above joints were critical, then there would exist $l \in \mathcal{L}_{fl,k}$ containing $\gtrsim \frac{|J_k|k}{L}$ such critical joints. However, since every $\ell \in \mathcal{L}_{fl,k}$ is not a critical line (it is, in fact, a flat line), it contains at most $d_k$ critical points. Therefore, $\frac{|J_k|k}{L}\lesssim d_k$, a contradiction, since $k$ is bad.

It follows that $\gtrsim |J_k|$ joints in $J_k'$ are flat. The set $\bar{J}_k$ of these joints satisfies the statement of the claim.
\end{proof}

\textbf{Step 4: For all bad $\boldsymbol{k}$, $\boldsymbol{\bar{J}_k}$ has planar structure.} This is an easy consequence of the previous step. Indeed, fix any bad $k$. For any plane $\Pi\in \mathcal{P}_k$ (i.e., for any plane lying inside $Z_k$), define
\begin{equation*}
     \bar{J}_{k,\Pi}:=\bar{J}_k\cap \Pi
\end{equation*}
and
\begin{equation*}
    \mathcal{L}^k_\Pi:=\{l\in\mathcal{L}:l\subseteq \Pi\text{ and }l\text{ contains some joint in }\bar{J}_{k,\Pi}\}.
\end{equation*}
 It holds that $\bar{J}_k=\bigcup_{\Pi\in\mathcal{P}_k}\bar{J}_{k,\Pi}$, as the joints in $J_k$ live inside the planes in $\mathcal{P}_k$. The sets $\bar{J}_{k,\Pi}$ are pairwise disjoint, as each joint in $\bar{J}_k$ is a regular point of $Z_k$, and thus cannot live inside two distinct planes in $Z_k$. Finally, the sets $\mathcal{L}^k_\Pi$ are pairwise disjoint as well: if a line $l$ belongs to $\mathcal{L}^k_\Pi$ and $\mathcal{L}^k_{\Pi'}$ for some $\Pi\neq\Pi'$ inside $Z_k$, then $l$ is a critical line of $Z_k$ and therefore cannot contain any regular points of $Z_k$ (contradicting the definitions of $\mathcal{L}^k_\Pi$ and $\mathcal{L}_{\Pi'}^k$).
\begin{remark}\label{qqq}
   {\rm As we have noted,
   in general, a union of sets with planar structure does not have planar structure -- see the discussion in Section~\ref{ppp}. Nevertheless, the study of the interaction between the $Z_k$ corresponding to different bad $k$ reveals that the sets $\bar{J}_k$ above are  exceptional, in that they \textit{have large subsets whose union has planar structure.} The proof of this assertion -- which features below in Step 5 -- builds upon the following (already established) properties of $\bar{J}_k$:
   \begin{enumerate}[(i)]
       \item Each element of $\bar{J}_k$ is a regular point of $Z_k$.
       \item Each $x\in\bar{J}_k$ lies in $\bar{J}_{k,\Pi}$ for the unique $\Pi$ inside $Z_k$ that contains $x$.
       \item The sets $\mathcal{L}^k_\Pi$ (for any fixed bad $k$) are pairwise disjoint.
   \end{enumerate}
   }
\end{remark}

\textbf{Step 5: The set  $\boldsymbol{J_{\text{bad}}=\bigcup_{\text{bad }k}J_k}$ has nearly planar structure.} The following lemma immediately implies that the set
\begin{equation*}
    \bar{J}:=\bigcup_{\text{bad }k}\bar{J}_k,
\end{equation*}
has nearly planar structure. Since $|J_k|\sim |\bar{J}_k|$ for all bad $k$, it directly follows that $J_\text{bad}= \bigcup_{\text{bad }k}J_k$ has nearly planar structure.

\begin{lemma}\label{essence} For each bad $k$, there exists $\bar{J}'_k\subseteq \bar{J}_k$, with $|\bar{J}'_k|\sim |J_k|$, such that $\bigcup_{{\rm bad}\;k}\; \bar{J}'_k$ has planar structure.
\end{lemma}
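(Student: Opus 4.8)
The natural candidate for the promised planar structure is the one assembled from the per-level data of Step~4. Set $\mathcal{P}:=\bigcup_{\text{bad }k}\mathcal{P}_k$, regarded as a set of planes, and for $x\in\bigcup_{\text{bad }k}\bar{J}_k$ let $k(x)$ be the unique dyadic value with $x\in J_{k(x)}$ and $\Pi(x)\in\mathcal{P}_{k(x)}$ the plane of $Z_{k(x)}$ through $x$; this is well defined because, by Step~3 (see Remark~\ref{qqq}(i),(ii)), $x$ is a regular point of $Z_{k(x)}$ and hence lies in exactly one plane of that variety. Declare $J_\Pi:=\{x:\Pi(x)=\Pi\}$. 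The $J_\Pi$ are pairwise disjoint with union $\bigcup_{\text{bad }k}\bar{J}_k$, and $J_\Pi\subseteq\Pi$. Property P1) of Definition~\ref{definition: planar structure} holds automatically, and survives any later shrinking of the $J_\Pi$: by Step~3 each $x\in J_\Pi$ lies on $\sim k(x)$ lines of $\mathcal{L}$, all contained in $\Pi=\Pi(x)$, and each such line, containing $x\in J_\Pi$, belongs to $\mathcal{L}_\Pi$. So the entire difficulty is property P2), the pairwise disjointness of the families $\mathcal{L}_\Pi$.

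A failure of P2) means that some $l\in\mathcal{L}$ lies in two distinct planes $\Pi,\Pi'\in\mathcal{P}$, each carrying a retained joint of the final structure on $l$; necessarily $l=\Pi\cap\Pi'$. If the two joints sat at the same level $k$, the line $l$ would be the intersection of two planes of $Z_k$, hence a critical line of $Z_k$, and so could contain no regular point — this is exactly the argument of Step~4, via Remark~\ref{qqq}(iii). Thus every potential failure is \emph{across levels}. To destroy them, say that $l$ is \emph{used at level $k$} if $l$ lies in a plane of $\mathcal{P}_k$ and contains a point of $\bar{J}_k$, and let $\bar{J}'_k$ consist of those $x\in\bar{J}_k$ none of whose lines in $\Pi(x)$ is used at any level $k''<k$ via a plane different from $\Pi(x)$. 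A short verification then gives P2) for $\bigcup_{\text{bad }k}\bar{J}'_k$ with $\mathcal{P}$ and the restricted $J_\Pi$: in an across-level coincidence $l=\Pi\cap\Pi'$, where $l$ carries a point $y\in J_\Pi$ and a point $y'\in J_{\Pi'}$ with, say, $k(y)<k(y')$, the line $l$ (through $y'$, lying in $\Pi(y')=\Pi'$) is used at level $k(y)<k(y')$ via the plane $\Pi(y)=\Pi\ne\Pi'$, so $y'$ would already have been discarded.

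\textbf{The crux} is to show that $|\bar{J}'_k|\sim|J_k|$ for \emph{every} bad $k$ — the role of Claim~\ref{interaction} and the obstruction analysis of Steps~5a--5c. The basic mechanism: if $x\in\bar{J}_k\setminus\bar{J}'_k$, then $x$ lies in some plane $\Pi''\in\mathcal{P}_{k''}$ with $k''<k$ and $\Pi''\ne\Pi(x)$, and $l:=\Pi(x)\cap\Pi''$ is a line of $\mathcal{L}$ through $x$ used at level $k''$; in particular $l\subseteq Z_k\cap Z_{k''}$ and $l$ carries joints of both $\bar{J}_k$ and $\bar{J}_{k''}$. For a fixed such $\Pi''$, each discarded joint of $\bar{J}_k$ lying in $\Pi''$ emits $\gtrsim k$ lines of $\mathcal{L}$ transversal to $\Pi''$ (only $l$ among its lines lies in $\Pi''$), and these are pairwise distinct across distinct discarded joints — a common one would pass through two points of $l\subseteq\Pi''$ and hence lie in $\Pi''$. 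So there are $\lesssim L/k$ of them per $\Pi''$; summing over $k''<k$ and over $\Pi''\in\mathcal{P}_{k''}$, and using $|\mathcal{P}_{k''}|\le d_{k''}\lesssim L^{1/2}k''^{-1-\epsilon/2}$ (from the failure of the goodness estimate at $k''$ and the definition of $d_{k''}$) together with $\sum_{k''}k''^{-1-\epsilon/2}=O(1)$, yields a first bound $\lesssim L^{3/2}/k$ on the number of discarded joints.

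\textbf{The main obstacle} is that $L^{3/2}/k$ is not by itself a small fraction of $|J_k|$ for small bad $k$ (badness only supplies $|J_k|\gtrsim L^{3/2}k^{\epsilon/2-2}$), even though summing this crude bound over all bad $k$ does give the acceptable total $\lesssim L^{3/2}$. One therefore has to improve the count of discarded joints to $\le\frac{1}{100}|J_k|$ uniformly in bad $k$ — which is precisely what should be packaged as Claim~\ref{interaction}. The plan for that improvement is to exploit the algebraic interaction of the partitioning varieties: split on whether $Z_k$ and $Z_{k''}$ share a planar component (in the shared case the common plane lies in both varieties, and at level $k$ all lines through its joints lie in it, so it contributes no genuine across-level conflict), and in the non-shared case bound the conflict lines $l\subseteq Z_k\cap Z_{k''}$ by the Guth--Katz bound of Theorem~\ref{bezout for lines} while bounding the joints that can accumulate on them by combining Szemer\'edi--Trotter inside the planes $\Pi(x)$ with a multijoints-type estimate (in the spirit of the use of Propositions~\ref{few critical lines} and \ref{few flat lines} together with the multijoints theorem in Step~3). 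Making all of this close down to small $k$, where the partitioning degrees $d_{k''}$ can be as large as $\sim L^{1/2}$, is the delicate point of Step~5.
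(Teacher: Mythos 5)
Your set-up is essentially the paper's Steps 5a and 5c: you partition $\bigcup_{\text{bad }k}\bar{J}_k$ by assigning each joint to the unique plane of its own partitioning variety, observe that P1) is automatic and stable under refinement, reduce P2) to across-level coincidences $\ell=\Pi\cap\Pi'$, and discard a joint whenever one of its lines is already "used" at an earlier level via a different plane. All of that is sound. The genuine gap is exactly where you place it: the uniform bound $|\bar{J}_k\setminus\bar{J}'_k|\le\tfrac{1}{100}|J_k|$ for \emph{every} bad $k$, i.e.\ the content of Claim~\ref{interaction}, is not proved. Your transversal-lines count gives $\lesssim_\epsilon L^{3/2}/k$ discarded joints, while badness only guarantees $|J_k|\gtrsim_\epsilon L^{3/2}k^{\epsilon/2-2}$, so you are short by a factor of up to $k^{1-\epsilon/2}$; and the repair you sketch (Bézout via Theorem~\ref{bezout for lines} on the common lines of $Z_k\cap Z_{k''}$, plus Szemerédi--Trotter and a multijoints estimate) is not the paper's argument and does not obviously close, since for small $k$ the degrees $d_{k''}$ can be as large as $\sim L^{1/2}$ and the Bézout bound $d_kd_{k''}$ on conflict lines is then useless.

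The idea you are missing is that the levels should \emph{not} be processed in the natural order of $k$. The paper introduces a total order $\prec$ on the bad $k$'s by arranging the quantities $k^{\epsilon}d_k$ in increasing order, and defines a joint $x\in\bar{J}_k$ to be problematic only with respect to levels $k'\prec k$ in \emph{this} order. Assuming $\gtrsim|J_k|$ problematic joints, a pigeonholing in $k'$ produces a single $k'\prec k$ with $|\bar{J}^{k'}_{k,\text{prob}}|\gtrsim_\epsilon|J_k|/k'^{\epsilon}$; counting incidences between these joints and $\mathcal{L}^k$, split according to whether the lines of $\mathcal{L}^k$ lie inside $Z_{k'}$ or not, yields in either case $|J_k|k\lesssim_\epsilon Lk'^{\epsilon}d_{k'}$. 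The whole point of the order $\prec$ is that $k'\prec k$ then gives $k'^{\epsilon}d_{k'}\le k^{\epsilon}d_k$, so the bound becomes $|J_k|k\lesssim_\epsilon Lk^{\epsilon}d_k$, i.e.\ $|J_k|k^{2-\epsilon/2}\lesssim_\epsilon L^{3/2}$, contradicting the badness of $k$ \emph{at that same level}. With the natural order $k''<k$ that you use, $d_{k''}$ cannot be compared with $d_k$, which is precisely why your count is forced to sum $d_{k''}$ over all lower levels and ends up lossy. Since any total order on the bad $k$'s suffices for the disjointness verification in Step 5c, replacing your order by $\prec$ and running the incidence argument above is what completes the proof.
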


We break up the proof of Lemma \ref{essence} into three sub-steps.

\textbf{Step 5a: Identification of the enemy.} In this step we partition the joints in $\bar{J}$ into planes in a natural way, and identify the configurations which could obstruct planar (and thus potentially nearly planar) structure for $\bar{J}$ in the context of this partition.

Indeed, we begin by partitioning $\bar{J}$ using the planes inside the collections $\mathcal{P}_k$ (defined in Step 3), over all bad $k$. In particular, recall that, for each bad $k$, all joints in $\bar{J}_k$ are regular points of $Z_k$, lying inside the union of planes $\bigcup_{\Pi\in\mathcal{P}_k}\Pi\subseteq Z_k$. For each bad $k$ and $\Pi\in\mathcal{P}_k$, we have defined $\bar{J}_{k,\Pi}:=\bar{J}_k\cap\Pi$.

As stated in Remark~\ref{qqq}, for any given $k$ the sets $\bar{J}_{k,\Pi}$ are disjoint (and thus form a partition of $\bar{J}_k$).

Let $\mathcal{P}:=\bigcup_{\text{bad }k}\mathcal{P}_k$. For each $\Pi\in\mathcal{P}$, let
\begin{equation*}
    \bar{J}_{\Pi}:=\bigcup_{k:\;  \Pi\in\mathcal{P}_k}\bar{J}_{k,\Pi}.
\end{equation*}
The sets $\bar{J}_\Pi$ are pairwise disjoint (and form a partition of $\bar{J}$). Indeed, for each $x\in \bar{J}$, the $\Pi\in\mathcal{P}$ for which $x\in J_\Pi$ is the unique plane inside $Z_k$ that contains $x$, for the unique $k$ for which $x\in J_k$. Define
\begin{equation*}
    \mathcal{L}_{\Pi}:=\{l\in\mathcal{L}:l\subseteq \Pi\text{ and }l\text{ contains some point in }\bar{J}_\Pi\}
\end{equation*}
and observe that for any $\Pi\in\mathcal{P}$ it holds that $\mathcal{L}_{\Pi}=\bigcup_{\text{bad }k}\mathcal{L}_\Pi^k$, where, recall,
\begin{equation*}
      \mathcal{L}_{\Pi}^k:=\{l\in\mathcal{L}:l\subseteq \Pi \text{ and }l\text{ contains some point in }\bar{J}_{k,\Pi}\}.
 \end{equation*}
 If the sets $\mathcal{L}_\Pi$ are pairwise disjoint, then $\bar{J}$ has planar structure. In order to study the interaction of the sets $\mathcal{L}_{\Pi}$, for any bad $k$ define
\begin{equation*}
      \mathcal{L}^k:=\bigsqcup_{\Pi\in\mathcal{P}_k}\mathcal{L}_{\Pi}^k.
 \end{equation*}
\begin{figure}[htbp]
\centering

\tdplotsetmaincoords{70}{110}
\begin{tikzpicture}[tdplot_main_coords,font=\sffamily, scale=0.74];

\draw[fill=red,opacity=0.2] (0,0,3) -- (-5,-5,-2) -- (-5,-5,-8) -- (0,0,-3);
\draw[black, thin] (0,0,3) -- (-5,-5,-2) -- (-5,-5,-8) -- (0,0,-3);

\node[anchor=south west,align=center] (line) at (-10,-11,-4) {\large{\color{red}$\Pi=\Pi_\ell^k\subseteq Z_k$}};
;

\draw[fill=blue,opacity=0.2] (0,0,-3) -- (0,0,3) -- (1,3.5,1) -- (1,3.5,-5) -- cycle;
\draw[black, thin] (0,0,-3) -- (0,0,3) -- (1,3.5,1) -- (1,3.5,-5) -- cycle;

\node[anchor=south west,align=center] (line) at (1,3.8,1) {\large{\color{blue}$\Pi'=\Pi_\ell^{k'}\subseteq Z_{k'}$}};
;

\draw[color=red!60!blue, ultra thick] (0,0,-3) -- (0,0,3);

\node[anchor=south west,align=center] (line) at (-1,-1,2.7) {\large{\color{red!50!blue}$\ell=\Pi\cap \Pi'$}};
;

\fill[red] (0,0,1.5) circle (1mm);
\draw[red, thick] (0,0,1.5) -- (-1.3,-1.3,1);
\draw[red, thick] (0,0,1.5) -- (-3.5,-3.5,-2);
\draw[red, thick] (0,0,1.5) -- (-2.2,-2.2,-1.5);

\fill[blue] (0,0,0.2) circle (1mm);
\draw[blue, thick] (0,0,0.2) -- (0.2, 0.7,1);
\draw[blue, thick] (0,0,0.2) -- (0.4, 1.4,0.8);
\draw[blue, thick] (0,0,0.2) -- (0.6, 2.1,0);
\draw[blue, thick] (0,0,0.2) -- (0.4, 1.4,-0.5);

\fill[red] (0,0,-1) circle (1mm);
\begin{scope}[shift={(0,0,-2.5)}]
\draw[red, thick] (0,0,1.5) -- (-1.3,-1.3,1);
\draw[red, thick] (0,0,1.5) -- (-3.5,-3.5,-2);
\draw[red, thick] (0,0,1.5) -- (-2.2,-2.2,-1.5);
\end{scope}

\fill[blue] (0,0,-2) circle (1mm);
\begin{scope}[shift={(0,0,-2.2)}]
\draw[blue, thick] (0,0,0.2) -- (0.2, 0.7,1);
\draw[blue, thick] (0,0,0.2) -- (0.4, 1.4,0.8);
\draw[blue, thick] (0,0,0.2) -- (0.6, 2.1,0);
\draw[blue, thick] (0,0,0.2) -- (0.4, 1.4,-0.5);
\end{scope}

\node[anchor=south west,align=center] (line) at (-3.2,-3.2,-0.7) {\color{red} $\sim k$};
;
\node[anchor=south west,align=center] (line) at (-3.2,-3.2,-3.2) {\color{red} $\sim k$};
;

\node[anchor=south west,align=center] (line) at (0.4,1.4,0) {\color{blue} $\sim k'$};
;
\node[anchor=south west,align=center] (line) at (0.4,1.4,-2.2) {\color{blue} $\sim k'$};
;

\end{tikzpicture}

 \captionsetup{singlelinecheck=off}
\caption[.]{\small{We demonstrate the situation which could obstruct planar structure (and could thus potentially also obstruct nearly planar structure) for $\bar{J}$. The $k$, $k'$ are both bad. The line $\ell$ is the intersection of two distinct planes, one in $Z_k$ (red) and one in $Z_{k'}$ (blue), and carries simultaneously joints in $\bar{J}_k$ (red) and in $\bar{J}_{k'}$ (blue). The bulk of the lines through each of the red joints are in $\mathcal{L}^k$, and they are flat lines of $Z_k$ inside $\Pi$. The bulk of the lines through each of the blue joints are in $\mathcal{L}^{k'}$, and they are flat lines of $Z_{k'}$ inside $\Pi'$. The line $\ell$ itself lies in $\mathcal{L}^k\cap\mathcal{L}^{k'}$.}}
\label{fig: obstruction to planar structure}
\end{figure}
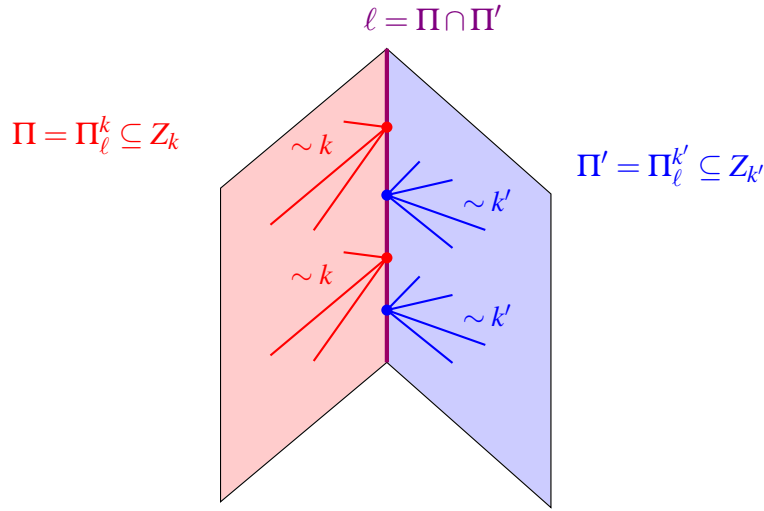  
 For any $\ell\in\mathcal{L}^k$, denote by $\Pi_{\ell}^k$ the unique $\Pi\in\mathcal{P}_k$ for which $\ell\in\mathcal{L}_\Pi^k$ (the unique $\Pi\in\mathcal{P}_k$ in which $\ell$ lies). Note that this $\Pi$ is unique, due to the disjointness of the sets $\mathcal{L}^k_\Pi$ for fixed $k$ (see Remark~\ref{qqq}).

Suppose that the sets $\mathcal{L}_\Pi$ are not pairwise disjoint. This means that there exists a line $\ell\in \mathcal{L}_\Pi\cap\mathcal{L}_{\Pi'}$ for some $\Pi\neq\Pi'$ in $\mathcal{P}$; in particular, there exist bad $k,k'$ so that $\ell\in\mathcal{L}_\Pi^k\cap \mathcal{L}_{\Pi'}^{k'}$. This implies that $k\neq k'$ (as the sets $\mathcal{L}_{\Pi}^k,\mathcal{L}_{\Pi'}^k$ are disjoint). Therefore, $\ell\in\mathcal{L}^k\cap\mathcal{L}^{k'}$ for these distinct $k,k'$, and moreover the planes $\Pi_{\ell}^k=\Pi$ and $\Pi_{\ell}^{k'}=\Pi'$ are distinct. This situation is depicted in Figure~\ref{fig: obstruction to planar structure}.

We have thus demonstrated that the only potential obstruction to $\bar{J}$ having planar structure in the context of our partition would be the existence of some line $\ell$ that lives simultaneously in two sets $\mathcal{L}^k$, $\mathcal{L}^{k'}$ for $k\neq k'$, and additionally satisfies $\Pi_{\ell}^k\neq\Pi_{\ell}^{k'}$. We are not disproving the existence of such a problematic line here. However, the technical Claim~\ref{interaction} in Step 5b below will imply (in Step 5c) that, even if such problematic lines exist (causing potential obstructions to planar structure), they still cannot obstruct nearly planar structure. In particular, \textit{the lines through each joint in a large subset of $\bar{J}=\bigcup_{\text{bad }k}\; \bar{J}_k$ are not problematic.} The algebraic-geometric nature of obstructions to planar structure (see Figure~\ref{fig: obstruction to planar structure}) prompts us to show this by exploring how different varieties $Z_k$, $Z_{k'}$ (for bad $k$, $k'$) interact with each other.

{ \bf Step 5b: Interaction of the varieties $\boldsymbol{Z_k}$ as bad $\boldsymbol{k}$ varies.} To study this interaction in a manner which is helpful for the proof of Claim~\ref{interaction}, we define a total
order $\prec$ on the set of bad $k$ such that
\begin{equation*}
    \text{if }k'\prec k\text{, then }k'^{\epsilon}d_{k'}\leq k^{\epsilon}d_k.
\end{equation*}
This is achieved by simply ordering the quantities $k^{\epsilon}d_k$ in (usual) increasing order, and assigning the same order to the corresponding $k$'s. (For $k$'s for which the corresponding $k^{\epsilon}d_k$ are equal, any total order between them is permitted.)

To formulate the claim, for any $x\in \bar{J}$ define
\begin{equation*}
    \mathcal{L}_x:=\{l\in\mathcal{L}_\Pi\text{ through }x\}
\end{equation*}
for the unique $\Pi\in\mathcal{P}$ for which $x\in J_\Pi$. Observe that if $x\in \bar{J}_k$ then all lines in $\mathcal{L}_x$ belong to $\mathcal{L}^k$ (and total $\sim k$ in number).

\begin{claim}\label{interaction} For all bad $k$, there exists $\bar{J}'_k\subseteq \bar{J}_k$, with $|\bar{J}'_k|\sim |J_k|$, such that any line $\ell\in \bigcup_{x\in \bar{J}_k'}\mathcal{L}_x$ with
\begin{equation*}
    \ell \in \mathcal{L}^k \cap \bigcup_{k'\prec k}\;\mathcal{L}^{k'}
\end{equation*}
satisfies
\begin{equation*}
  \Pi^{k'}_\ell = \Pi^k_\ell.  
\end{equation*}
\end{claim}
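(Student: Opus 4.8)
The plan is to isolate, for each bad $k$, the set $B_k$ of joints $x \in \bar{J}_k$ through which some line counted in $\mathcal{L}_x$ already violates the conclusion, and to show $|B_k| \le \tfrac12 |\bar{J}_k|$, with room to spare; then $\bar{J}_k' := \bar{J}_k \setminus B_k$ works. Call a line $\ell$ a \emph{$(k,k')$-enemy} if $k' \prec k$, $\ell \in \mathcal{L}^k \cap \mathcal{L}^{k'}$ and $\Pi^k_\ell \neq \Pi^{k'}_\ell$; these are precisely the lines depicted in Figure~\ref{fig: obstruction to planar structure}, and $B_k$ is the set of $x \in \bar{J}_k$ admitting a $(k,k')$-enemy in $\mathcal{L}_x$ for some $k' \prec k$. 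By the definition of $\mathcal{L}_x$, for $x \in \bar{J}_k'$ no line of $\mathcal{L}_x$ is then an enemy, which is exactly the claim.

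First I would record the geometry of a $(k,k')$-enemy $\ell$. Writing $\Pi = \Pi^k_\ell$, $\Pi' = \Pi^{k'}_\ell$, we have $\ell \subseteq \Pi \subseteq Z_k$ and $\ell \subseteq \Pi' \subseteq Z_{k'}$ with $\Pi \neq \Pi'$. The first key point is that $\Pi \not\subseteq Z_{k'}$: otherwise the joint $y \in \bar{J}_{k',\Pi'}$ on $\ell$ guaranteed by $\ell \in \mathcal{L}^{k'}_{\Pi'}$ would lie in $\Pi \cap \Pi'$; since $\Pi$ and $\Pi'$ are distinct planes contained in $Z_{k'}$, their defining linear forms both divide $p_{k',sf}$, so $\nabla p_{k',sf}(y) = 0$, contradicting the fact (Remark~\ref{qqq}(i)) that elements of $\bar{J}_{k'}$ are regular points of $Z_{k'}$. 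Consequently $Z_{k'} \cap \Pi$ is a nonzero plane algebraic curve of degree $\le d_{k'}$, hence contains at most $d_{k'}$ lines; so for each fixed $\Pi \in \mathcal{P}_k$ there are at most $\sum_{k' \prec k} d_{k'}$ lines in $\Pi$ that are $(k,k')$-enemies for some $k' \prec k$. Here the order $\prec$ is used decisively: $k' \prec k$ forces $(k')^\epsilon d_{k'} \le k^\epsilon d_k$, so summing over dyadic $k' \ge 1$ gives $\sum_{k' \prec k} d_{k'} \le d_k \sum_{k' \text{ dyadic}} (k/k')^\epsilon \lesssim_\epsilon k^\epsilon d_k$.

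Next I would bound the joints of $\bar{J}_k$ lying on such lines. Fix $\Pi \in \mathcal{P}_k$ and a line $\ell \subseteq \Pi$. Every $x \in \bar{J}_{k,\Pi} \cap \ell$ is a flat point of $Z_k$ through which $\sim k$ flat lines of $\mathcal{L}$ pass; being coplanar at a regular point, these lie in the tangent plane $\Pi$, hence all belong to $\mathcal{L}^k_\Pi$. Distinct joints on $\ell$ share no line of $\mathcal{L}^k_\Pi$ other than $\ell$, so there are $\gtrsim k\,|\bar{J}_{k,\Pi} \cap \ell|$ distinct lines in $\mathcal{L}^k_\Pi$, i.e. $|\bar{J}_{k,\Pi} \cap \ell| \lesssim |\mathcal{L}^k_\Pi|/k$. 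Setting $I := \#\{(x,\ell) : x \in \bar{J}_k,\ \ell \in \mathcal{L}_x \text{ a }(k,k')\text{-enemy for some } k' \prec k\}$, summing over $\Pi \in \mathcal{P}_k$ and the $\lesssim_\epsilon k^\epsilon d_k$ enemy lines in each, and using that the $\mathcal{L}^k_\Pi$ are pairwise disjoint subsets of $\mathcal{L}$ (Remark~\ref{qqq}(iii)), I get
\begin{equation*}
|B_k| \;\le\; I \;\lesssim_\epsilon\; \frac{k^\epsilon d_k}{k}\sum_{\Pi \in \mathcal{P}_k} |\mathcal{L}^k_\Pi| \;\le\; \frac{k^\epsilon d_k}{k}\,L \;=\; \frac{A L^3}{|J_k|\,k^{4-\epsilon}},
\end{equation*}
recalling $d_k = A L^2/(|J_k|k^3)$. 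Since $k$ is bad, $|J_k| k^{2-\epsilon/2} \ge C_\epsilon L^{3/2}$, so $|J_k|^2 k^{4-\epsilon} \ge C_\epsilon^2 L^3$ and hence $|B_k| \lesssim_\epsilon (A/C_\epsilon^2)|J_k|$, which is at most $\tfrac12|\bar{J}_k|$ once $C_\epsilon$ is large enough relative to $A$ and the absolute constants involved. Then $\bar{J}_k' := \bar{J}_k \setminus B_k$ satisfies $|\bar{J}_k'| \gtrsim |\bar{J}_k| \sim |J_k|$, and no line of $\mathcal{L}_x$, for $x \in \bar{J}_k'$, is an enemy.

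The main obstacle I expect is not a single step but closing the quantitative chain: one must check that the factor $k^\epsilon$ lost in bounding $\sum_{k' \prec k} d_{k'}$ is exactly absorbed by the elementary Szemer\'edi--Trotter-type estimate $|\bar{J}_{k,\Pi} \cap \ell| \lesssim |\mathcal{L}^k_\Pi|/k$ together with the choice $d_k \sim L^2/(|J_k|k^3)$, so that the \emph{badness} of $k$ --- i.e. the failure of $|J_k|k^{2-\epsilon/2} \lesssim L^{3/2}$ --- is precisely what makes $|B_k|$ a small proportion of $|J_k|$. The only genuinely geometric input is the observation that an enemy plane $\Pi \in \mathcal{P}_k$ cannot be contained in $Z_{k'}$, which rests on the regularity of the joints of $\bar{J}_{k'}$ established in Step 3.
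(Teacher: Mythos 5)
Your argument is correct, and it reaches the conclusion by a genuinely different counting scheme from the paper's. The paper argues by contradiction: it pigeonholes to a single bad $k'\prec k$ carrying $\gtrsim_\epsilon |J_k|/(k')^{\epsilon}$ problematic joints, lower-bounds the incidences between these joints and $\mathcal{L}^k$ by $\sim k$ per joint, and upper-bounds the same incidence count by splitting $\mathcal{L}^k$ into lines lying in $Z_{k'}$ and lines not lying in $Z_{k'}$: the latter meet $Z_{k'}$ (which contains all the problematic joints) in at most $d_{k'}$ points each, while for the former it pigeonholes to a single line $\ell$ and shows that $\Pi^k_\ell$ would otherwise contain more than $d_{k'}$ lines of $Z_{k'}$ and hence lie in $Z_{k'}$ --- impossible since $\ell$ contains a regular point of $Z_{k'}$. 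You instead count directly, plane by plane: the enemy lines inside a fixed $\Pi\in\mathcal{P}_k$ all lie in the curve $Z_{k'}\cap\Pi$ (your observation that $\Pi\not\subseteq Z_{k'}$ is precisely the paper's regularity/criticality argument, deployed one step earlier), so they number at most $\sum_{k'\prec k}d_{k'}\lesssim_\epsilon k^{\epsilon}d_k$ by the ordering $\prec$ and the convergent dyadic sum; and each enemy line carries $\lesssim |\mathcal{L}^k_\Pi|/k$ joints of $\bar{J}_{k,\Pi}$ by the elementary two-points-determine-a-line count (valid since badness forces $k$ to exceed a large constant). Summing over $\Pi$ via the disjointness of the $\mathcal{L}^k_\Pi$ yields the same final inequality $|J_k|k\lesssim_\epsilon Lk^{\epsilon}d_k$ that both of the paper's cases produce, and badness closes the argument identically. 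Your version dispenses with the contradiction, with the pigeonholing to a single $k'$ and to a single line, and with the in/out-of-$Z_{k'}$ dichotomy --- the dichotomy disappears because you only ever count the enemy lines themselves, which automatically lie in $Z_{k'}$; the price is the extra per-line joint count, which the paper's global incidence bookkeeping avoids. The one point to make explicit is the constant chase at the end: the $\epsilon$-dependent constant from $\sum_{k'}(k')^{-\epsilon}$, the partitioning constant $A$, and the constant in $|\bar{J}_k|\gtrsim|J_k|$ must all be absorbed by taking the badness threshold in Definition~\ref{def} sufficiently large, which is exactly the convention the paper adopts in the discussion following that definition.
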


Roughly speaking, Claim~\ref{interaction} states that, for any bad $k$, only a small proportion of the joints in $\bar{J}_k$ may live in lines $\ell$ as in the obstructive Figure~\ref{fig: obstruction to planar structure}, for $k'\prec k$.

\begin{proof}[Proof of Claim~\ref{interaction}.] Fix a bad
  $k$. For $x\in \bar{J}_k$, we say that $x$ is \textit{problematic} if there exists $\ell\in \mathcal{L}_x$ with
\begin{equation*}
    \ell\in \bigcup_{k'\prec k}\;\mathcal{L}^{k'}\text{ and }\;\Pi^{k'}_\ell\neq \Pi^k_\ell.
\end{equation*}
More precisely, for $k'\prec k$, we say that $x$ is $k'$\textit{-problematic} if there exists $\ell\in \mathcal{L}_x$ with
\begin{equation*}
    \ell\in\mathcal{L}^{k'}\text{ and }\Pi^{k'}_\ell\neq \Pi^k_\ell.
\end{equation*}
In other words, $x\in\bar{J}_k$ is $k'$-problematic \textit{if it is a red joint inside some line $\ell$ as in Figure~ \ref{fig: obstruction to planar structure}.}

Denote by $\bar{J}_{k,\text{prob}}$ and  $\bar{J}^{k'}_{k,\text{prob}}$ the sets of problematic and $k'$-problematic joints, respectively. Observe that
\begin{equation}\label{eq:reduce to single k'}\bar{J}_{k,\text{prob}}=\bigcup_{\text{bad }k'\prec k}\;\;\bar{J}_{k,\text{prob}}^{k'}.
\end{equation}
The goal is to prove that 
\begin{equation*}
    |\bar{J}_{k,\text{prob}}|\lesssim |J_k|
\end{equation*}
(for an appropriate implicit constant smaller than 1); that is, that only a small proportion of the joints in $\bar{J}_k$ may live in lines $\ell$ as in Figure~\ref{fig: obstruction to planar structure}, for $k'\prec k$.

Suppose for contradiction that $|\bar{J}_{k,\text{prob}}|\gtrsim |J_k|$. It follows that there exists a bad $k'\prec k$ such that
\begin{equation*}
    |\bar{J}_{k,\text{prob}}^{k'}|\gtrsim_{\epsilon} \frac{|J_k|}{k'^{\epsilon}}
\end{equation*}
(since otherwise \eqref{eq:reduce to single k'} would imply that $|\bar{J}_{k,\text{prob}}| \leq \sum_{\text{bad }k'\prec k} |\bar{J}_{k,\text{prob}}^{k'}| \lesssim_\epsilon \sum_{\text{bad }k'\prec k} 
\frac{|J_k|}{k'^{\epsilon}} \lesssim |J_k|$, a contradiction under the assumption that the $\epsilon$-dependent implicit constant above is appropriately small).

We will derive a contradiction by appropriately bounding $I(\bar{J}_{k,\text{prob}}^{k'},\mathcal{L}^k)$, the number of incidences between the joints in $\bar{J}_{k,\text{prob}}^{k'}$ and the lines in $\mathcal{L}^k$. The core of the analysis is informally summarised in Figure~ \ref{fig: interaction} below, which builds on Figure~\ref{fig: obstruction to planar structure}.

To begin with, the planar structure of $\bar{J}_k$ (more precisely, the fact that $\sim k$ lines in $\mathcal{L}^k$ pass through each joint in $\bar{J}_k$) and the assumed lower bound on $|\bar{J}_{k,\text{prob}}^{k'}|$ imply that
\begin{equation*}
    I(\bar{J}_{k,\text{prob}}^{k'}, \mathcal{L}^k)\sim |\bar{J}_{k,\text{prob}}^{k'}|\cdot k\gtrsim_\epsilon \frac{|J_k|}{k'^{\epsilon}}\cdot k.
\end{equation*}
On the other hand, 
\begin{equation}\label{splitting the incidences}
    I(\bar{J}_{k,\text{prob}}^{k'}, \mathcal{L}^k)=I(\bar{J}_{k,\text{prob}}^{k'}, \mathcal{L}^k_{\subsetneq Z_{k'}})+I(\bar{J}_{k,\text{prob}}^{k'}, \mathcal{L}^k_{\subseteq Z_{k'}}),
\end{equation}
where $\mathcal{L}^k_{\subsetneq Z_{k'}}$ is the set of lines in $\mathcal{L}^k$ that do not lie fully inside $Z_{k'}$, and where $\mathcal{L}^k_{\subseteq Z_{k'}}$ is the set of lines in $\mathcal{L}^k$ that lie fully inside $Z_{k'}$. We split the analysis into two cases, according to which of the summands is dominant in \eqref{splitting the incidences}. Recall, each $x\in\bar{J}_{k,\text{prob}}^{k'}$ lies in $Z_k\cap Z_{k'}$.

$\bullet$ \textit{Suppose that }$I(\bar{J}_{k,\text{prob}}^{k'}, \mathcal{L}^k)\sim I(\bar{J}_{k,\text{prob}}^{k'}, \mathcal{L}^k_{\subsetneq Z_{k'}})$. It directly follows that
\begin{equation*}
    I( \bar{J}_{k,\text{prob}}^{k'}, \mathcal{L}^k_{\subsetneq Z_{k'}})\gtrsim_\epsilon \frac{|J_k|}{k'^{\epsilon}}\cdot k.
\end{equation*}
On the other hand, since $ \bar{J}_{k,\text{prob}}^{k'}\subseteq Z_{k'}$, each line in $\mathcal{L}^k_{\subsetneq Z_{k'}}$ contains at most $d_{k'}$ elements of $ \bar{J}_{k,\text{prob}}^{k'}\subseteq Z_{k'}$. Therefore,
\begin{equation*}
    I( \bar{J}_{k,\text{prob}}^{k'}, \mathcal{L}^k_{\subsetneq Z_{k'}})\leq L d_{k'}.\end{equation*}
It follows by the two estimates above that
\begin{equation*}
    \frac{|J_k|}{k'^{\epsilon}}\cdot k\lesssim_\epsilon Ld_{k'}.
\end{equation*}
This situation is illustrated by (ii) in Figure~\ref{fig: interaction}.

$\bullet$ \textit{Suppose that} $I(\bar{J}_{k,\text{prob}}^{k'}, \mathcal{L}^k)\sim I(\bar{J}_{k,\text{prob}}^{k'}, \mathcal{L}^k_{\subseteq Z_{k'}})$. It directly follows that
\begin{equation*}
    I( \bar{J}_{k,\text{prob}}^{k'}, \mathcal{L}^k_{\subseteq Z_{k'}})\gtrsim_\epsilon \frac{|J_k|}{k'^{\epsilon}}\cdot k.
\end{equation*}

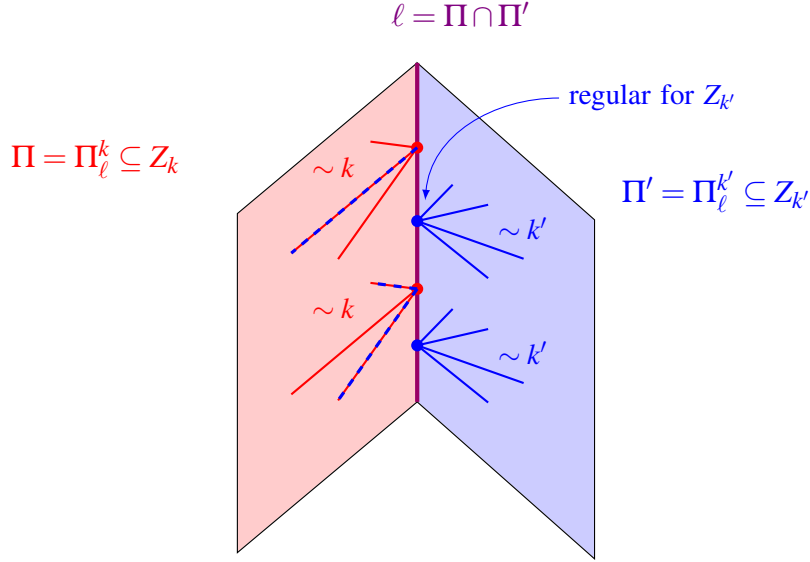
\begin{figure}[htbp]
\centering

\tdplotsetmaincoords{70}{110}
\begin{tikzpicture}[tdplot_main_coords,font=\sffamily, scale=0.8];

\draw[fill=red,opacity=0.2] (0,0,3) -- (-5,-5,-2) -- (-5,-5,-8) -- (0,0,-3);
\draw[black, thin] (0,0,3) -- (-5,-5,-2) -- (-5,-5,-8) -- (0,0,-3);

\node[anchor=south west,align=center] (line) at (-10,-11,-4) {\large{\color{red}$\Pi=\Pi_\ell^k\subseteq Z_k$}};
;

\draw[fill=blue,opacity=0.2] (0,0,-3) -- (0,0,3) -- (1,3.5,1) -- (1,3.5,-5) -- cycle;
\draw[black, thin] (0,0,-3) -- (0,0,3) -- (1,3.5,1) -- (1,3.5,-5) -- cycle;

\node[anchor=south west,align=center] (line) at (1,3.8,1) {\large{\color{blue}$\Pi'=\Pi_\ell^{k'}\subseteq Z_{k'}$}};
;

\draw[color=red!60!blue, ultra thick] (0,0,-3) -- (0,0,3);

\node[anchor=south west,align=center] (line) at (-1,-1,3) {\large{\color{red!50!blue}$\ell=\Pi\cap \Pi'$}};
;

\fill[red] (0,0,1.5) circle (1mm);
\draw[red, thick] (0,0,1.5) -- (-1.3,-1.3,1);
\draw[red, thick] (0,0,1.5) -- (-3.5,-3.5,-2);
\draw[red, thick] (0,0,1.5) -- (-2.2,-2.2,-1.5);
\draw[blue, very thick, dashed] (0,0,1.5) -- (-3.5,-3.5,-2);

\fill[blue] (0,0,0.2) circle (1mm);
\draw[blue, thick] (0,0,0.2) -- (0.2, 0.7,1);
\draw[blue, thick] (0,0,0.2) -- (0.4, 1.4,0.8);
\draw[blue, thick] (0,0,0.2) -- (0.6, 2.1,0);
\draw[blue, thick] (0,0,0.2) -- (0.4, 1.4,-0.5);
\draw[-latex, blue]
     (0.8, 2.8,3) [out=180, in=85] to (0.3,0.25,0.7) ;
\node[anchor=south west,align=center] (line) at (0.8, 2.8,2.6) {\color{blue}${\rm regular}$ ${\rm for}$ $Z_{k'}$};
;

\fill[red] (0,0,-1) circle (1mm);
\begin{scope}[shift={(0,0,-2.5)}]
\draw[red, thick] (0,0,1.5) -- (-1.3,-1.3,1);
\draw[red, thick] (0,0,1.5) -- (-3.5,-3.5,-2);
\draw[red, thick] (0,0,1.5) -- (-2.2,-2.2,-1.5);
\draw[blue, very thick, dashed] (0,0,1.5) -- (-1.3,-1.3,1);
\draw[blue, very thick, dashed] (0,0,1.5) -- (-2.2,-2.2,-1.5);
\end{scope}

\fill[blue] (0,0,-2) circle (1mm);
\begin{scope}[shift={(0,0,-2.2)}]
\draw[blue, thick] (0,0,0.2) -- (0.2, 0.7,1);
\draw[blue, thick] (0,0,0.2) -- (0.4, 1.4,0.8);
\draw[blue, thick] (0,0,0.2) -- (0.6, 2.1,0);
\draw[blue, thick] (0,0,0.2) -- (0.4, 1.4,-0.5);
\end{scope}

\node[anchor=south west,align=center] (line) at (-3.2,-3.2,-0.7) {\color{red} $\sim k$};
;
\node[anchor=south west,align=center] (line) at (-3.2,-3.2,-3.2) {\color{red} $\sim k$};
;

\node[anchor=south west,align=center] (line) at (0.4,1.4,0) {\color{blue} $\sim k'$};
;
\node[anchor=south west,align=center] (line) at (0.4,1.4,-2.2) {\color{blue} $\sim k'$};
;

\end{tikzpicture}

 \captionsetup{singlelinecheck=off}
\caption[.]{\small{The problematic joints (red) are arranged in lines $\ell\in \mathcal{L}^k\cap\mathcal{L}^{k'}$ as above (a configuration precisely as in Figure~\ref{fig: obstruction to planar structure}). Everything that is red lives in $Z_k$, and everything that is blue lives in $Z_{k'}$ (and $\ell$ itself lives in both zero sets). The lines that are only red do not live in $Z_{k'}$, while the red lines that are also blue-dashed live in $Z_{k'}$. The red joints are regular points of $Z_k$, while the blue joints are regular points of $Z_{k'}$. The problematic joints are few: (i) The existence of regular blue points along $\ell$ means that the red plane $\Pi$ \textit{cannot also be blue} (i.e., cannot live in $Z_{k'}$), so \textit{the blue-dashed lines are few}, contributing few incidences with the red joints. (ii) The red lines contribute few incidences with the red joints as each can only intersect the blue zero set few times.}}
\label{fig: interaction}
\end{figure} 
More precisely, one obtains that 
\begin{equation}\label{further property flat case}\gtrsim |\bar{J}_{k,\text{prob}}^{k'}|\gtrsim_\epsilon \frac{|J_k|}{k'^{\epsilon}}\text{ joints }x\in \bar{J}_{k,{\rm prob}}^{k'}\text{ have }\sim k\text{ lines in }\mathcal{L}^k_{\subseteq Z_{k'}}\text{ through each.}
\end{equation}
Since the joints in $\bar{J}_{k,\text{prob}}^{k'}$ lie in the lines in the set
\begin{equation*}
    \{\ell\in\mathcal{L}^k\cap\mathcal{L}^{k'}:\;\Pi^k_\ell\neq\Pi^{k'}_\ell\}\subseteq \mathcal{L},
\end{equation*}
it follows that there exists $\ell\in\mathcal{L}^k\cap\mathcal{L}^{k'}$ with $\Pi^k_\ell\neq\Pi^{k'}_\ell$ which contains $\gtrsim_\epsilon \frac{|J_k|}{k'^{\epsilon}L}$ of the joints in \eqref{further property flat case}. The $\sim k$ lines in $\mathcal{L}^k$ through each of these joints all lie in $\Pi^k_{\ell}$ (as these joints are regular points of $Z_k$ on the plane $\Pi_\ell^k\subseteq Z_k$, and the lines in $\mathcal{L}^k$ lie in $Z_k$); thus, by \eqref{further property flat case}, $\Pi^k_{\ell}$ contains $\gtrsim_\epsilon \frac{|J_k|}{k'^{\epsilon}L}k$ lines that all lie in $Z_{k'}$. However, these lines are fewer than $d_{k'}$ in total, as otherwise $\Pi^k_{\ell}$ would lie in $Z_{k'}$, and thus $\ell$, a line that contains at least one regular point of $Z_{k'}$, would be the intersection of two distinct planes in $Z_{k'}$, which cannot happen. Therefore,
\begin{equation*}
    \frac{|J_k|}{k'^{\epsilon}L}k\lesssim_\epsilon d_{k'}.
\end{equation*}
This situation is illustrated by (i) in Figure~\ref{fig: interaction}.
\bigskip

Observe that both cases above lead to the same bound
\begin{equation*}
    |J_k|k \lesssim_\epsilon Lk'^{\epsilon}d_{k'}
\end{equation*}
for $J_k$. Now, due to the fact that $k'\prec k$ it holds that $k'^{\epsilon}d_{k'}\leq k^{\epsilon}d_{k}$, thus
\begin{equation*}
   |J_k|k \lesssim_\epsilon Lk^{\epsilon}d_{k},
\end{equation*}
or equivalently $|J_k|k^{2-\epsilon/2}\lesssim_\epsilon L^{3/2}$,
which is a contradiction because $k$ is bad.

Therefore, $|\bar{J}_{k,\text{prob}}|\lesssim |J_k|$; the proof of Claim~\ref{interaction} is complete.

\end{proof}

{\bf Step 5c: Conclusion: the failure of the enemy.}
Claim~\ref{interaction} implies that $\bar{J}':=\bigcup_{\text{bad }k}\bar{J}'_k$ has planar structure. Indeed, for all $k$ and $\Pi\in\mathcal{P}=\bigcup_{\text{bad }k}\mathcal{P}_k$ define
\begin{equation*}
    \bar{J}'_{k,\Pi}:=\bar{J}_k'\cap \bar{J}_{k,\Pi}
\end{equation*}
and
\begin{equation*}
    \bar{J}'_\Pi:=\bigsqcup_{k:\Pi\in\mathcal{P}_k}\bar{J}'_{k,\Pi}.
\end{equation*}
The sets $\bar{J}'_{\Pi}$ are pairwise disjoint, as each $x\in \bar{J}$ (and thus in $\bar{J}'$) belongs to $\bar{J}'_{\Pi}$ for the unique $\Pi\in\mathcal{P}_k$ that contains $x$, for the unique $k$ for which $x\in\bar{J}'_k$. Therefore, to show that $\bar{J}'$ has planar structure it suffices to show that the sets
\begin{equation*}
    \bar{\mathcal{L}}_\Pi:=\{l\in\mathcal{L}:l\subseteq \Pi\text{ and }l\text{ contains some joint in }\bar{J}'_{\Pi}\}
\end{equation*}
are pairwise disjoint.

Assume for contradiction that the sets $\bar{\mathcal{L}}_\Pi$ are not pairwise disjoint. This means that there exists a line $\ell\in\bar{\mathcal{L}}_\Pi\cap\bar{\mathcal{L}}_{\Pi'}$ for some $\Pi\neq\Pi'$ in $\mathcal{P}$. Since $\ell\in\mathcal{L}_\Pi$, it follows that $\ell$ is contained in $\Pi$ and contains some joint $x\in\bar{J}'_{k,\Pi}$, for some $k$. This further implies that  $\ell\in \bigcup_{x\in \bar{J}'_k}\mathcal{L}_x$, $\ell\in \mathcal{L}^k$ and $\Pi_\ell^k=\Pi\in\mathcal{P}_k$. Similarly, the fact that $\ell\in\bar{\mathcal{L}}_{\Pi'}$ implies that $\ell$ contains some joint in $\bar{J}'_{k',\Pi}$ for some $k'$, and therefore that $\ell\in \bigcup_{x\in \bar{J}'_{k'}}\mathcal{L}_x$, $\ell\in\mathcal{L}^{k'}$ and $\Pi_{\ell}^{k'}=\Pi'\in\mathcal{P}_{k'}$. 

It is impossible for the above to hold for $k=k'$. Indeed, if this was the case, then $\ell$ would be the intersection of the two distinct planes $\Pi,\Pi'$, which both lie in $Z_k$. Thus all points in $\ell$ would be critical points of $Z_k$, and therefore $\ell$ would not contain any element of $\bar{J}'_k$, a contradiction.

It follows that $k\neq k'$. It has thus been shown that for these distinct $k,k'$
\begin{equation*}
    \ell\in \bigcup_{x\in\bar{J}'_k}\mathcal{L}_x\text{ and }\ell\in\bigcup_{x\in\bar{J}'_{k'}}\mathcal{L}_x
\end{equation*}
while also
\begin{equation*}
    \ell\in\mathcal{L}^k\cap\mathcal{L}^{k'}.
\end{equation*}
Now, either $k'\prec k$ or $k\prec k'$. If $k'\prec k$, then the above implies that $\ell\in\bigcup_{x\in\bar{J}'_k}\mathcal{L}_x$ and $\ell\in \mathcal{L}^k\cap\bigcup_{k'\prec k}\mathcal{L}^{k'}$; by Claim~ \ref{interaction} it follows that $\Pi=\Pi'$, a contradiction. Similarly, if $k\prec k'$ the above implies that $\ell\in \bigcup_{x\in\bar{J}'_{k'}}\mathcal{L}_x$ and $\ell\in \mathcal{L}^{k'}\cap \bigcup_{k\prec k'}\mathcal{L}^{k'}$, which again leads to the contradiction $\Pi=\Pi'$ by Claim~\ref{interaction}.

Therefore, the sets $\bar{\mathcal{L}}_\Pi$ are pairwise disjoint. It follows that $\bar{J}'$ has planar structure -- the proof of Lemma~\ref{essence} is complete.


\textbf{Step 6: Proving the discrete Kakeya estimate.} To complete the proof of Theorem~\ref{3d}, it remains to show the discrete Kakeya-type estimate \eqref{eq:discrete kakeya}. We begin by showing that 
it holds under the additional hypothesis of nearly planar structure, with the aid of the Szemer\'edi--Trotter theorem. 

In crude terms, each plane $\Pi$ featuring in a nearly planar structure is \textit{independent} from the other planes, when it comes to counting incidences. In particular, it is the lines from \textit{within} $\Pi$ that contribute essentially all incidences with the points that have chosen $\Pi$. This fact informs the basic idea for the proof of Lemma~\ref{lemma:nearlyplanarest}: finding an appropriate incidence estimate on each such plane $\Pi$, and then summing over all $\Pi$.

\begin{lemma}\label{lemma:nearlyplanarest}
Let $J$ be a set of joints formed by a set $\mathcal{L}$ of $L$ lines in $\mathbb{R}^3$. If $J$ has nearly planar structure, then
\begin{equation*}
    \sum_{x\in J}\left(\sum_{l\in\mathcal{L}}\chi_l(x)\right)^{3/2}\lesssim L^{3/2}.
\end{equation*}
\end{lemma}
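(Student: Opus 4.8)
The plan is to use the hypothesis of nearly planar structure to reduce to the case where $J$ itself has planar structure, and then to run a Szemer\'edi--Trotter argument on each plane separately, summing the contributions with the help of the disjointness property P2 and a simple bound on the number of joints lying on a single plane.

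I would first carry out the reduction. Write $n(x):=\sum_{l\in\mathcal L}\chi_l(x)$, and recall that $\{J_k\}$ partitions $J$ with $n(x)\sim k$ on $J_k$, so the quantity to be bounded is $\sim\sum_k|J_k|k^{3/2}$. By nearly planar structure there are subsets $J_k'\subseteq J_k$ with $|J_k'|\sim|J_k|$ whose union $J':=\bigcup_kJ_k'$ has planar structure; since the $J_k'$ partition $J'$ we get $\sum_k|J_k|k^{3/2}\sim\sum_k|J_k'|k^{3/2}\sim\sum_{x\in J'}n(x)^{3/2}$, so it suffices to bound $\sum_{x\in J'}n(x)^{3/2}$. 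Let $\mathcal P$, $\{J_\Pi\}_{\Pi\in\mathcal P}$, $\{\mathcal L_\Pi\}$ be as in Definition~\ref{definition: planar structure}, and set $m_\Pi(x):=\#\{l\in\mathcal L_\Pi:x\in l\}$ and $L_\Pi:=|\mathcal L_\Pi|$. Property P1 gives $m_\Pi(x)\sim n(x)$ for $x\in J_\Pi$, so, since $\{J_\Pi\}$ partitions $J'$,
\begin{equation*}
\sum_{x\in J'}n(x)^{3/2}=\sum_{\Pi\in\mathcal P}\ \sum_{x\in J_\Pi}n(x)^{3/2}\sim\sum_{\Pi\in\mathcal P}\ \sum_{x\in J_\Pi}m_\Pi(x)^{3/2}.
\end{equation*}

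Next I would establish the per-plane estimate $\sum_{x\in J_\Pi}m_\Pi(x)^{3/2}\lesssim L_\Pi^{3/2}+L_\Pi|J_\Pi|^{1/2}$. Identifying $\Pi$ with $\mathbb R^2$, the points $J_\Pi$ and lines $\mathcal L_\Pi$ form a planar incidence configuration, so the number of $x\in J_\Pi$ lying on between $k$ and $2k$ lines of $\mathcal L_\Pi$ is $\lesssim L_\Pi^2/k^3+L_\Pi/k$ by Szemer\'edi--Trotter, and is trivially $\le|J_\Pi|$ as well. Decomposing dyadically in $k$ and pairing these bounds against $k^{3/2}$: the contribution of $L_\Pi/k$ is $\lesssim L_\Pi\sum_{k\le L_\Pi}k^{1/2}\sim L_\Pi^{3/2}$, while $\min(|J_\Pi|,L_\Pi^2/k^3)\,k^{3/2}$, as a function of dyadic $k$, increases and then decreases, with maximum $\sim L_\Pi|J_\Pi|^{1/2}$ attained at $k\sim(L_\Pi^2/|J_\Pi|)^{1/3}$, so its dyadic sum is $\lesssim L_\Pi|J_\Pi|^{1/2}$. (Points $x$ with $m_\Pi(x)=1$ are harmless: over all $\Pi$ they number at most $|J'|\le|J|\lesssim L^{3/2}$ by the joints theorem.)

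Finally I would sum over planes, and here the decisive observation is that $|J_\Pi|\le L$ for every $\Pi$: each $x\in J_\Pi$ is a joint of $\mathcal L$, so of the $\ge3$ linearly independent directions at $x$ at least one is not parallel to $\Pi$, i.e.\ $x$ lies on a line of $\mathcal L$ meeting $\Pi$ only at $x$, and distinct joints of $J_\Pi$ force distinct such lines. Together with P2 --- which makes the $\mathcal L_\Pi$ pairwise disjoint subsets of $\mathcal L$, so $\sum_\Pi L_\Pi\le L$ --- and with $\sum_\Pi L_\Pi^{3/2}\le\bigl(\sum_\Pi L_\Pi\bigr)^{3/2}$, this yields
\begin{equation*}
\sum_{\Pi\in\mathcal P}\bigl(L_\Pi^{3/2}+L_\Pi|J_\Pi|^{1/2}\bigr)\le\Bigl(\sum_{\Pi\in\mathcal P}L_\Pi\Bigr)^{3/2}+L^{1/2}\sum_{\Pi\in\mathcal P}L_\Pi\le L^{3/2}+L^{1/2}\cdot L\lesssim L^{3/2},
\end{equation*}
which completes the proof. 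The main obstacle is this per-plane estimate and the way it is summed: neither the trivial bound $|\{x:m_\Pi(x)\sim k\}|\le|J_\Pi|$ alone nor Szemer\'edi--Trotter alone suffices (each leaves a residual $\sum_\Pi L_\Pi^2$, which is only $O(L^2)$ after summing), and it is precisely the interpolation between them at $k\sim(L_\Pi^2/|J_\Pi|)^{1/3}$, combined with the cheap bound $|J_\Pi|\le L$, that makes the plane-by-plane sum close --- as it must, given that the extremal Loomis--Whitney grid saturates the inequality via many planes each carrying $\sim L^{1/2}$ lines and $\sim L$ joints of bounded multiplicity.
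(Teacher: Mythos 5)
Your proof is correct and follows essentially the same route as the paper's: reduce to planar structure, apply Szemer\'edi--Trotter on each plane, interpolate against the trivial bound $|J_\Pi|$ together with the joints bound $|J_\Pi|\le L$, and sum over planes using the disjointness of the $\mathcal L_\Pi$. The only (cosmetic) differences are that you place the crossover at $k\sim(L_\Pi^2/|J_\Pi|)^{1/3}$ and sum the dyadic pieces directly, whereas the paper crosses over at $(L_\Pi^2/L)^{1/3}$ and uses Cauchy--Schwarz in the small-$k$ regime; both yield the same per-plane bound $L_\Pi L^{1/2}$.
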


\begin{proof}
The lemma is proved for sets of joints with planar structure; the general statement immediately follows by the definition of nearly planar structure.

Let $J$ be a set of joints formed by $\mathcal{L}$ that has planar structure. As before, the desired inequality becomes
\begin{equation*}
    \sum_{k}|J_k|k^{3/2}\lesssim L^{3/2}.
\end{equation*}
Since $J$ has planar structure, there exist a set $\mathcal{P}$ of planes and a decomposition $J=\bigsqcup_{\Pi\in\mathcal{P}}J_{\Pi}$ in sets $J_{\Pi}\subseteq J\cap \Pi$, so that the sets
\begin{equation*}
    \mathcal{L}_\Pi:=\{l\in\mathcal{L}:l\subseteq \Pi\text{ and }l\text{ contains some point in } J_\Pi \},
\end{equation*}
whose cardinalities we denote by $L_\Pi$, are pairwise disjoint and satisfy \begin{equation*}
     \#\{\text{lines in }\mathcal{L}_{\Pi}\text{ through }x\}\sim \#\{\text{lines in }\mathcal{L}\text{ through }x\}
\end{equation*}
for every joint $x\in J_\Pi$. Observe that $J_k=\bigsqcup_{\Pi\in\mathcal{P}}J_{k,\Pi}$, where $J_{k,\Pi}$ is the set of joints in $J_\Pi\cap J_k$. The desired inequality thus becomes
\begin{equation}\label{eq:structural estimate}
      \sum_{\Pi\in\mathcal{P}}\;\sum_{k}|J_{k,\Pi}|k^{3/2}\lesssim L^{3/2},
\end{equation}
and will follow from the ``pointwise" estimate
\begin{equation}\label{eq:planar structural estimate}
    \sum_{k}|J_{k,\Pi}|k^{3/2}\lesssim L_\Pi L^{1/2}\text{ for all }\Pi\in\mathcal{P}
\end{equation}
by adding over all $\Pi\in\mathcal{P}$, crucially using the fact that
\begin{equation*}
    \sum_{\Pi\in\mathcal{P}}L_\Pi\leq L,
\end{equation*}
which holds because the sets $\mathcal{L}_\Pi$ are pairwise disjoint.

We now show \eqref{eq:planar structural estimate} to complete the proof. Let $\Pi\in\mathcal{P}$. Observe that, due to the planar structure of $J$, each joint in $J_{k,\Pi}$ lies in $\sim k$ lines in $\mathcal{L}_\Pi$. Therefore, the desired estimate
\begin{equation}\label{eq:fixed plane estimate}
    \sum_{k}|J_{k,\Pi}|k^{3/2}\lesssim L_\Pi L^{1/2}
\end{equation}
is a statement regarding incidences between $J_{k,\Pi}$ and $\mathcal{L}_\Pi$, and the Szemer\'edi--Trotter theorem will be employed for its proof. In particular, for $k\gtrsim L_\Pi^{1/2}$, applying the Szemer\'edi--Trotter theorem to count incidences between $J_{k,\Pi}$ and $\mathcal{L}_\Pi$, one obtains
\begin{equation*}
    |J_{k,\Pi}|\lesssim \frac{L_\Pi}{k}.
\end{equation*}
Therefore,
\begin{equation*}
    \sum_{k\gtrsim L_{\Pi}^{1/2}}|J_{k,\Pi}|k^{3/2}\lesssim L_{\Pi}\sum_{k\gtrsim L_{\Pi}^{1/2}}k^{1/2}\lesssim L_\Pi^{3/2}\lesssim L_\Pi L^{1/2}.
\end{equation*}
On the other hand, the Szemer\'edi--Trotter theorem asserts that for $k\lesssim L_\Pi^{1/2}$ the inequality
\begin{equation}\label{fact1}
    |J_{k,\Pi}|\lesssim \frac{L_\Pi^2}{k^3}
\end{equation}
holds. Moreover, the joints structure may be exploited to derive
\begin{equation} \label{fact2}
    |J_{\Pi}|\leq L.
\end{equation}
Indeed, all points in $J_{\Pi}$ lie on the same plane $\Pi$, however they are joints formed by $\mathcal{L}$; hence, there exists a distinct line in $\mathcal{L}$ through each joint in $J_\Pi$ (which does not lie in $\Pi$), and therefore $L\geq |J_\Pi|$. Inequalities \eqref{fact1} and \eqref{fact2} will now be combined to derive the estimate
\begin{equation}\label{interesting k on plane}
    \sum_{k\lesssim L_{\Pi}^{1/2}}|J_{k,\Pi}|k^{3/2}\lesssim L_\Pi L^{1/2},
\end{equation}
concluding the proof. The analysis is split in two cases, according to whether $k\lesssim Q$ or $k\gtrsim Q$, where 
\begin{equation*}
    Q:=\left(\frac{L_{\Pi}^2}{L}\right)^{1/3}.
\end{equation*}
The former case is resolved by exploiting the joints structure (in particular, \eqref{fact2}). More precisely,
\begin{equation}\label{improved S-T}
    \sum_{k\lesssim Q}|J_{k,\Pi}|k^3\lesssim \left(\sum_{k\lesssim Q}|J_{k,\Pi}|\right) Q^3\lesssim |J_{\Pi}|\frac{L_{\Pi}^2}{L}\lesssim L\frac{L_{\Pi}^2}{L}= L_{\Pi}^2,
\end{equation}
where the last inequality is \eqref{fact2}. (Note that the above estimate may be viewed as an improved version of the Szemer\'edi--Trotter theorem for $\bigsqcup_{k\lesssim Q}J_{k,\Pi}$, as merely applying \eqref{fact1} for each $k\lesssim Q$ and adding over all such $k$ will in general yield the above inequality with a $\log Q$ loss.) Applying the Cauchy-Schwarz inequality and using again the joints structure estimate \eqref{fact2}, this time combined with \eqref{improved S-T}, one deduces
\begin{eqnarray} \label{small k on plane}
   \begin{aligned}
       \sum_{k\lesssim Q}|J_{k,\Pi}|k^{3/2}&= \sum_{k\lesssim Q}(|J_{k,\Pi}|k^3)^{1/2} |J_{k,\Pi}|^{1/2}\\
       &\lesssim \left(\sum_{k\lesssim Q}|J_{k,\Pi}|k^3\right)^{1/2}\left( \sum_{k\lesssim Q}|J_{k,\Pi}|\right)^{1/2}\\
       &\lesssim\left(\sum_{k\lesssim Q}|J_{k,\Pi}|k^3\right)^{1/2} |J_{\Pi}|^{1/2}\\
       &\lesssim L_{\Pi} L^{1/2}.
    \end{aligned}
\end{eqnarray}
 The situation for the remaining $k$ (those for which $Q\lesssim k\lesssim L_\Pi^{1/2}$) is resolved using estimate \eqref{fact1} (which holds independently of the joints structure). In particular,
\begin{eqnarray} \label{large k on plane}
   \begin{aligned}
      \sum_{Q\lesssim k\lesssim L_\Pi^{1/2}} |J_{k,\Pi}| k^{3/2}&=\sum_{Q_\Pi\lesssim k\lesssim L_\Pi^{1/2}} |J_{k,\Pi}| k^3 \frac{1}{k^{3/2}}\\
      &\lesssim L_{\Pi}^2\sum_{Q\lesssim k\lesssim L_\Pi^{1/2}} \frac{1}{k^{3/2}}\\
      &\lesssim L_\Pi^2 \frac{1}{Q^{3/2}}\sim L_\Pi^2\frac{L^{1/2}}{L_\Pi}\sim L_\Pi L^{1/2}.
   \end{aligned}
\end{eqnarray}
 Combining \eqref{small k on plane} and \eqref{large k on plane}, the desired estimate \eqref{interesting k on plane} follows.
\end{proof}

Now we prove \eqref{eq:discrete kakeya} in the  general case. For large $k$, 
the Szemer\'edi--Trotter theorem implies that
\begin{equation}\label{eq: bushes}
    |J_k|\lesssim \frac{L}{k}\text{ for all }k\gtrsim L^{1/2},
\end{equation}
hence
\begin{equation}\label{eq:trivial estimate}
    \sum_{k\gtrsim L^{1/2}} |J_k|k^{3/2}\lesssim \sum_{k\gtrsim L^{1/2}} L k^{1/2}\lesssim L^{3/2}.
\end{equation}
For (small) good $k$ the inequality
\begin{equation*}
    \sum_{\text{good }k}|J_k|k^{3/2}\lesssim L^{3/2}
\end{equation*}
follows from the superior estimate \eqref{eq:excepgood_k}. Finally, the estimate
\begin{equation*}
    \sum_{\text{bad }k}|J_k|k^{3/2}\lesssim L^{3/2},
\end{equation*}
follows directly from the fact that $J_\text{bad}$ has nearly planar structure together with Lemma~\ref{lemma:nearlyplanarest}. This completes the proof of Step 6 and of the theorem.

\end{proof}
\begin{remark}\label{large_l}
{\rm As is shown by the case where all lines in $\mathcal{L}$ pass through the same point, equality in \eqref{eq:trivial estimate} is sometimes (essentially) achieved. Note that, in this case of large $k$, \eqref{eq: bushes} implies via a simple counting argument (and independently of the joints structure) that the joints and lines are arranged in essentially non-interacting bushes.}
\end{remark}

\appendix 
\section{Appendix: Affine-invariant Hasse calculus -- directional derivatives, restrictions and multiplicities}\label{Appendix}

Throughout this appendix we will consider polynomials and not polynomial mappings. We shall regard all vectors in $\mathbb{F}^n$ as {\em column vectors} unless otherwise stated. 

\subsection{The Hasse derivative.} 

Let $n\geq 1$. For any $i=(i_1,\ldots,i_n)$ and $j=(j_1,\ldots,j_n)\in\mathbb{N}^n$, define
$$\binom{i}{j}:=\binom{i_1}{j_1}\cdots \binom{i_n}{j_n}.
$$
For all $i=1,\ldots,n$, denote by $e_i$ the vector $(0,\ldots,0,1,0,\ldots,0)^T$ with $1$ in the $i$-th coordinate. Finally, for any field $\mathbb{F}$, any $x=(x_1,\ldots,x_n)^T\in\mathbb{F}^n$ and any multi-index $a=(a_1,\ldots,a_n)\in\mathbb{N}^n$, let $x^a:=x_1^{a_1}\cdots x_n^{a_n}$.

Theorem~\ref{multijoints} is proved by studying directional derivatives of appropriate polynomials along directions carried by the objects forming the joints. While in a general field setting derivatives cannot be defined analytically, they can be defined algebraically as coefficients in Taylor expansions.

\begin{definition} \emph{\textbf{(Hasse derivative)}} Let $\mathbb{F}$ be a field, $n\geq 1$ and $p\in\mathbb{F}[x_1,\ldots,x_n]$. For each $a\in\mathbb{N}^n$, the \emph{Hasse derivative} $D^ap$ of $p$ is defined as the element of $\mathbb{F}[x_1,\ldots,x_n]$ which is the coefficient of $y^a$ in the expression of $p(x+y)$ as a polynomial in $x$.

\end{definition}

It follows that, for all $p\in\mathbb{F}[x_1,\ldots,x_n]$, we have the ``Taylor expansion"
$$p(x)=\sum_{a\in\mathbb{N}^n}D^ap(x_0)(x-x_0)^a
$$
in the sense of equality between polynomials in $x$ and $x_0$, and therefore also in the sense of polynomials in $x$ with $x_0\in \mathbb{F}^n$ fixed. Moreover, if we know that an expression 
$$p(x)=\sum_{a\in\mathbb{N}^n}p_a(x_0)(x-x_0)^a.
$$
with $p_a$ a polynomial in $x_0$ holds in the world of polynomials in $x$ and $x_0$, then we can deduce that $p_a =  D^ap$.\footnote{More precisely, this equality holds in $\mathbb{F}[x,x_0]$ (and thus in $\mathbb{F}[x_0]$). Indeed, suppose that $\sum_{a\in\mathbb{N}^n, |a| \leq N}q_a(x_0)(x-x_0)^a = 0$, with $q_a$ a polynomial in $x_0$, and that some $q_a$ with $|a|=N$ is non-zero.  The coeffcients of $x^a$ with $|a|=N$ must be zero, and hence $q_a = 0$ for all $a$ with $|a| = N$, contradiction.}
\begin{remark} {\rm {Observe that one can recover a polynomial via its Hasse derivatives at a point. In the special case where $\mathbb{F}=\mathbb{R}$, the Hasse derivative $D^ap(x_0)$ is simply a (non-zero) multiple of the usual derivative $(e_1\cdot\nabla)^{a_1}\cdots (e_n\cdot\nabla)^{a_n}p(x_0)$ of $p$ at $x_0$; more precisely,
$$(e_1\cdot\nabla)^{a_1}\cdots (e_n\cdot\nabla)^{a_n}p(x_0)=a!\; D^ap(x_0).$$
So, in this particular case the usual and Hasse derivatives are equivalent notions. However, in general field settings the ``usual" derivatives $a!D^ap(x_0)$ of a polynomial at a point provide less information about the polynomial, in that they do not suffice to fully recover the polynomial. For instance, all ``usual" derivatives of the polynomial $p(x)=x^q\in \mathbb{Z}_q[x]$ for $q$ prime vanish at 0, yet $p$ has a non-zero coefficient ($D^{(q)}p(0)=1\neq 0$). Therefore the Hasse derivative generalises the standard Euclidean space derivative in a more robust way than the ``usual" derivative does. In particular, even in the case $n=1$, it is not in general the case that $D^{(2)} = D^{(1)} \circ D^{(1)}$ (see Proposition~\ref{properties} (iii) below), and it is quite possible for a polynomial to satisfy $D^a p = 0$ while $D^{a+1} p \neq 0$ -- consider for example $p(x) = x^q$ in $\mathbb{Z}_q[x]$, with $a = q-1$.}}
\end{remark}

\begin{proposition} \label{properties} \emph{\textbf{(Basic properties of the Hasse derivative.)}} Let $\mathbb{F}$ be a field and $n\geq 1$. Then, the following hold:
\begin{enumerate}[{\rm (i)}]

\item For each $a$, $D^a: \mathbb{F}[x_1,\ldots,x_n] \to \mathbb{F}[x_1,\ldots,x_n]$ is a linear map.\\

\item For any monomial $x_1^{a_1}\cdots x_n^{a_n}\in\mathbb{F}[x_1,\ldots,x_n]$, it holds that
\begin{equation*}
    D^{e_i}(x^{a_1}\cdots x^{a_n})=
                \begin{cases}
                  a_i\; x_1^{a_1}\cdots x_i^{a_i-1}\cdots x_n^{a_n},&\text{ if }\ a_i> 0,\\
                  0,&\text{ if }\ a_i=0
                \end{cases}.
\end{equation*}

\item $D^i\left(D^jp\right)=\binom{i+j}{j}D^{i+j}p
= \binom{i+j}{i}D^{i+j}p = {D^j\left(D^ip\right)}$, for all $p\in\mathbb{F}[x_1,\ldots,x_n]$ and $i,j\in\mathbb{N}^n$.\\

\item Each $D^a$ is translation-invariant: $D^a(p( \cdot + y))(x) = D^ap( x + y)$ as polynomials in $x$ and $y$.
\end{enumerate}
\end{proposition}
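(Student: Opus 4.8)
The plan is to deduce all four statements from the single defining property of the Hasse derivative, namely the identity
\begin{equation*}
p(x+y)=\sum_{a\in\mathbb{N}^n}(D^ap)(x)\,y^a,
\end{equation*}
regarded as an equality in $\mathbb{F}[x_1,\ldots,x_n,y_1,\ldots,y_n]$, together with the uniqueness remark noted in the text: if $p(x+y)=\sum_a p_a(x)\,y^a$ with each $p_a\in\mathbb{F}[x_1,\ldots,x_n]$, then necessarily $p_a=D^ap$.

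Statement (i) is immediate: for $p,q\in\mathbb{F}[x_1,\ldots,x_n]$ and $\alpha,\beta\in\mathbb{F}$ we have $(\alpha p+\beta q)(x+y)=\alpha\,p(x+y)+\beta\,q(x+y)=\sum_a\big(\alpha(D^ap)(x)+\beta(D^aq)(x)\big)y^a$, and uniqueness of the expansion coefficients gives $D^a(\alpha p+\beta q)=\alpha D^ap+\beta D^aq$. Statement (ii) is a direct computation on a single monomial: writing $(x+y)^a=\prod_{i=1}^n(x_i+y_i)^{a_i}$ and applying the one-variable binomial theorem to each factor, the coefficient of $y^{e_i}$ (that is, of $y_i$ to the first power with no other $y$-variable present) equals $\binom{a_i}{1}x_i^{a_i-1}\prod_{l\neq i}x_l^{a_l}=a_i\,x_1^{a_1}\cdots x_i^{a_i-1}\cdots x_n^{a_n}$ when $a_i\geq 1$, and equals $0$ when $a_i=0$. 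Statement (iv) is also quick: setting $q(x):=p(x+y_0)$, we have $q(x+y)=p(x+y+y_0)=p\big((x+y_0)+y\big)=\sum_a(D^ap)(x+y_0)\,y^a$, so uniqueness yields $(D^aq)(x)=(D^ap)(x+y_0)$ as polynomials in $x$ and $y_0$, which is (iv).

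The substance is in (iii), and the plan there is to expand $p(x+y+z)$ in two ways, where now $x,y,z$ each denote a block of $n$ variables. Grouping as $p\big((x+y)+z\big)$ and applying the defining identity first to the polynomial $p$ with increment $z$, and then to each polynomial $D^jp$ with increment $y$, gives
\begin{equation*}
p(x+y+z)=\sum_{j}(D^jp)(x+y)\,z^j=\sum_{j}\Big(\sum_i\big(D^i(D^jp)\big)(x)\,y^i\Big)z^j .
\end{equation*}
Grouping instead as $p\big(x+(y+z)\big)$, and then expanding $(y+z)^b$ coordinatewise by the binomial theorem as $(y+z)^b=\sum_{i+j=b}\binom{b}{i}y^iz^j$, gives
\begin{equation*}
p(x+y+z)=\sum_{b}(D^bp)(x)\,(y+z)^b=\sum_{i,j}\binom{i+j}{i}\big(D^{i+j}p\big)(x)\,y^iz^j .
\end{equation*}
Both right-hand sides are honest elements of $\mathbb{F}[x,y,z]$, so comparing the coefficient of the monomial $y^iz^j$ (an element of $\mathbb{F}[x]$) yields $D^i(D^jp)=\binom{i+j}{i}D^{i+j}p$. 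Since $\binom{i+j}{i}=\binom{i+j}{j}$ holds coordinatewise, and since the middle expression $\binom{i+j}{i}D^{i+j}p$ is symmetric in $i$ and $j$, all four quantities in (iii) coincide, completing the plan.

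The only delicate point — more bookkeeping than mathematics — is in (iii): one must keep straight that the inner expansion $(D^jp)(x+y)=\sum_i\big(D^i(D^jp)\big)(x)\,y^i$ is the instance of the defining identity applied to the polynomial $D^jp$, not to $p$, and that both groupings of $p(x+y+z)$ are genuine polynomial identities so that comparing coefficients in $\mathbb{F}[x,y,z]$ is legitimate. Once this is set up cleanly, the elementary identity $\binom{i+j}{i}=\binom{i+j}{j}$ finishes the argument, and everything else reduces to the definition together with the uniqueness of Taylor coefficients.
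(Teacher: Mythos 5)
Your proof is correct. Note that the paper does not actually prove this proposition: it delegates (i) and (iii) to the references of Dvir and Dvir--Kopparty--Saraf--Sudan, (ii) to Iliopoulou's thesis, and declares (iv) an easy exercise. So there is no in-paper argument to compare against; what you have written is the standard self-contained derivation, and it is sound. All four parts follow, as you say, from the defining identity $p(x+y)=\sum_a D^ap(x)\,y^a$ together with uniqueness of the coefficients of the monomials $y^a$ (which, unlike the expansion in powers of $(x-x_0)$ treated in the paper's footnote, is immediate since the $y^a$ are genuine monomials, linearly independent over $\mathbb{F}[x]$). The two-way expansion of $p(x+y+z)$ for (iii) is exactly the classical argument, and your care in applying the defining identity to the polynomial $D^jp$ rather than to $p$ in the inner expansion, and in reading off coefficients of $y^iz^j$ in $\mathbb{F}[x,y,z]$, is precisely where the bookkeeping matters. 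One could quibble that in (iv) the operator $D^a$ is being applied to a polynomial with coefficients in $\mathbb{F}[y_0]$ rather than in the field $\mathbb{F}$, but the definition and the uniqueness statement go through verbatim over any integral domain, so this is harmless.
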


Proofs for properties (i) and (iii) can be found for example in \cite{Dvir} and \cite{Sudan}, while (ii) is proved in \cite{Iliopoulou_13}. The proof of (iv) is an easy exercise.

Much of the rest of this section is devoted to a careful verification that calculus with Hasse derivatives proceeds in parallel with classical calculus. In subsequent subsections we consider, in turn, directional derivatives, restrictions of derivatives of polynomials to planes, Hasse-multiplicities of polynomials and vanishing properties of restrictions of directional derivatives of polynomials to planes. Many of the statements which follow also appear, in disguised form, in \cite{Zhang_16}.

The following technical lemma describes the derivatives of restrictions of polynomials to affine subspaces, and will subsequently be used for the study of directional derivatives.

\begin{lemma} \label{restricting on plane}Let $\mathbb{F}$ be a field, $n\geq 1$, $p\in\mathbb{F}[x_1,\ldots,x_n]$ and $x_0\in\mathbb{F}^n$. Let $k \in \{1,2, \dots , n\}$ and let $P$ be the $k$-plane through $x_0$ spanned by the vectors $\omega_1,\ldots,\omega_k\in\mathbb{F}^n\setminus\{0\}$. Let $\Omega$ be the $n \times k$ matrix with columns $\omega_1,\ldots,\omega_k$, and let, for $t=(t_1,\ldots,t_k)^T$,
\begin{equation*}
    p_{|_{P_{x_0}}}(t):=p(x_0+\Omega t)\in\mathbb{F}[t_1,\ldots,t_k].
\end{equation*}
Then, for all $m\in\mathbb{N}^k$, the identity 
\begin{equation*}
    D^m\big(p_{|_{P_{x_0}}}\big)(t)=\sum_{a=\alpha_1+\cdots+\alpha_k\in\mathbb{N}^n:\;|\alpha_i|=m_i\;\forall i}D^ap(x_0+\Omega t)\;\omega_1^{\alpha_1}\cdots\omega_k^{\alpha_k}
\end{equation*}
holds in $\mathbb{F}[t_1,\ldots,t_k]$.
\end{lemma}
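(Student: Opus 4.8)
The plan is to prove the identity by expanding $p(x_0 + \Omega t + \Omega y)$ in two ways and matching coefficients, using the characterization of Hasse derivatives as Taylor coefficients. First I would note that $p_{|_{P_{x_0}}}(t) = p(x_0 + \Omega t)$ is a polynomial in $t_1, \dots, t_k$, so by definition its Hasse derivative $D^m(p_{|_{P_{x_0}}})(t)$ is the coefficient of $y^m$ (for $y = (y_1, \dots, y_k)^T$) in the expansion of $p_{|_{P_{x_0}}}(t + y) = p(x_0 + \Omega t + \Omega y)$, regarded as a polynomial in $t$ (with $y$ the formal variable). The goal is to identify that coefficient with the stated sum over multi-index decompositions.

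The key step is to apply the defining Taylor expansion of $p$ itself, centered at the point $x_0 + \Omega t$. Writing $\Omega y = \sum_{i=1}^k y_i \omega_i$, we have
\begin{equation*}
p(x_0 + \Omega t + \Omega y) = \sum_{a \in \mathbb{N}^n} D^a p(x_0 + \Omega t)\, (\Omega y)^a = \sum_{a \in \mathbb{N}^n} D^a p(x_0 + \Omega t)\, (y_1 \omega_1 + \cdots + y_k \omega_k)^a,
\end{equation*}
where for $a = (a_1, \dots, a_n)$, the expression $(y_1\omega_1 + \cdots + y_k\omega_k)^a$ means $\prod_{r=1}^n (y_1 \omega_{1,r} + \cdots + y_k \omega_{k,r})^{a_r}$. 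Then I would expand each factor by the multinomial theorem, collecting powers of the $y_i$. A given monomial $y^m = y_1^{m_1} \cdots y_k^{m_k}$ arises precisely from choosing, in the $r$-th factor, a decomposition of $a_r$ into $\alpha_{1,r} + \cdots + \alpha_{k,r}$; globally this amounts to writing $a = \alpha_1 + \cdots + \alpha_k$ with $\alpha_i \in \mathbb{N}^n$ and $|\alpha_i| = m_i$ for each $i$, and the resulting coefficient of $y^m$ is $\omega_1^{\alpha_1} \cdots \omega_k^{\alpha_k}$ times the appropriate multinomial factors — which I will need to check telescope correctly so that, after summing the multinomial coefficients over the decomposition, one recovers exactly $D^a p(x_0 + \Omega t)\, \omega_1^{\alpha_1}\cdots\omega_k^{\alpha_k}$ for each decomposition (i.e. that the multinomial weights distribute so that each ordered decomposition appears with weight one relative to $D^a p$). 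Matching this against the expansion of $p_{|_{P_{x_0}}}(t+y)$ whose $y^m$-coefficient is by definition $D^m(p_{|_{P_{x_0}}})(t)$ yields the claimed formula; the uniqueness remark following the definition of the Hasse derivative (that a polynomial expansion in powers of $(x - x_0)$ has coefficients forced to be the Hasse derivatives) justifies reading off coefficients, since everything is an honest identity of polynomials in $t$ and $y$.

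The main obstacle I anticipate is purely bookkeeping: being careful that the multinomial coefficients arising from expanding $(y_1\omega_1 + \cdots + y_k\omega_k)^a$ factor-by-factor combine so that, after grouping all contributions to a fixed $y^m$ and fixed $a$, the net numerical factor is exactly $1$ (so that the only surviving data is $D^a p$ and the monomial $\omega_1^{\alpha_1}\cdots\omega_k^{\alpha_k}$). Concretely, within the $r$-th coordinate factor $(y_1\omega_{1,r} + \cdots + y_k\omega_{k,r})^{a_r}$, the multinomial theorem produces a coefficient $\binom{a_r}{\alpha_{1,r}, \dots, \alpha_{k,r}}$, and one must verify that these do not accumulate incorrectly — essentially this reduces to the fact that the Hasse-Taylor expansion of $p$ is "unweighted" in the right sense, in contrast to the classical Taylor expansion which carries factorials. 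I would handle this either by a direct multinomial computation in the one-variable-per-coordinate setting, or more cleanly by first proving the $k=1$ case (a single direction $\omega$, where the identity reads $D^m(p(x_0 + \omega t)) = \sum_{|a| = m} D^a p(x_0 + \omega t)\, \omega^a$) directly from the definition, and then iterating: apply the $k=1$ statement one direction at a time using Proposition~\ref{properties}(iii) to combine the iterated Hasse derivatives, and reindex the resulting multiple sum as a sum over decompositions $a = \alpha_1 + \cdots + \alpha_k$. The iterative route keeps the combinatorics manageable and leans on results already available in the excerpt.
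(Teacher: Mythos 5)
Your approach is the same as the paper's: expand $p(x_0+\Omega t+\Omega y)$ via the Hasse--Taylor expansion of $p$ centred at $x_0+\Omega t$, expand $(\Omega y)^a=\prod_{j=1}^n\bigl(\sum_{l=1}^k y_l\omega_{jl}\bigr)^{a_j}$ coordinate by coordinate, and read off the coefficient of $y^m$. But the one step you defer --- checking that the multinomial coefficients ``distribute so that each ordered decomposition appears with weight one'' --- is precisely the delicate point, and it does \emph{not} come out as you hope: the weights do not collapse to $1$. Expanding the $j$-th factor produces $\binom{a_j}{b_{j1},\dots,b_{jk}}$, and taking the product over $j$ one finds that the coefficient of $y^m$ is
\[
\sum_{\substack{(\alpha_1,\dots,\alpha_k)\in(\mathbb{N}^n)^k\\ |\alpha_i|=m_i\ \forall i}}\Bigl(\prod_{j=1}^n\binom{a_j}{(\alpha_1)_j,\dots,(\alpha_k)_j}\Bigr)\,D^{a}p(x_0+\Omega t)\,\omega_1^{\alpha_1}\cdots\omega_k^{\alpha_k},\qquad a:=\alpha_1+\cdots+\alpha_k,
\]
with the product of multinomial coefficients genuinely surviving. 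A concrete check over $\mathbb{Q}$: take $n=k=2$, $p=x_1^2$, $x_0=0$, $\omega_1=(1,0)^T$, $\omega_2=(1,1)^T$, $m=(1,1)$. Then $p_{|_{P_{x_0}}}(t)=(t_1+t_2)^2$, so $D^{(1,1)}\bigl(p_{|_{P_{x_0}}}\bigr)=2$, whereas the unweighted sum has a single nonvanishing term ($\alpha_1=\alpha_2=(1,0)$, $a=(2,0)$, $D^{(2,0)}p=1$, $\omega_1^{\alpha_1}\omega_2^{\alpha_2}=1$) and equals $1$; the missing factor is $\binom{2}{1,1}=2$. So the identity holds only with the weights inserted. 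You are not alone here: the paper's own proof elides the same point, since its displayed expansion of $\bigl((t_1-t_{01})\omega_{j1}+\cdots+(t_k-t_{0k})\omega_{jk}\bigr)^{a_j}$ omits the multinomial coefficients, and the formula as printed in the lemma carries the corresponding omission.

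Two further remarks. First, your fallback route --- prove $k=1$ directly (which is clean, since $(y\omega)^a=y^{|a|}\omega^a$ needs no multinomial expansion) and iterate one direction at a time via Proposition~\ref{properties}(iii) --- is sound, but it will \emph{generate} the weights rather than eliminate them: combining the iterated derivatives by $D^i(D^jp)=\binom{i+j}{j}D^{i+j}p$ yields exactly the factor $\prod_j\binom{a_j}{(\alpha_1)_j,\dots,(\alpha_k)_j}$. Second, the discrepancy is harmless for every use the paper makes of this lemma: in Lemma~\ref{lemma derivatives of restrictions} the only surviving decomposition is $\alpha_i=(0,\dots,0,a_i,0,\dots,0)$, for which every binomial factor equals $1$, and the applications through Lemma~\ref{directional in terms of standard} (such as multiplicity invariance) only need $(\boldsymbol{\omega}\cdot\nabla)^ap$ to be \emph{some} linear combination of the $D^{\tilde a}p$ with $|\tilde a|=|a|$. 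But as a free-standing identity your write-up must carry the weights, or restrict to the situations where they are all equal to~$1$.
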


\begin{proof} For convenience we denote the entries of $\Omega$ by $(\omega_{{jl}})_{j=1}^n{ }_{l=1}^{k}$, so that the column vector $\omega_l$ has entries ${(\omega_{{jl}})}_{j=1}^n$. For any vectors $t=(t_1,\ldots,t_k)^T$ and $t_0=(t_{01},\ldots,t_{0k})^T$ of indeterminants, we have
\begin{eqnarray*}
\begin{aligned}
p_{|_{P_{x_0}}}(t)&=p(x_0+\Omega t)\\
&=\sum_{a\in\mathbb{N}^n}D^ap(x_0+\Omega t_0)\cdot \big(\Omega(t-t_0)\big)^a\\
&=\sum_{a\in\mathbb{N}^n}D^ap(x_0+\Omega t_0)\cdot \big((t_1-t_{01})\;\omega_1+\cdots +(t_k-t_{0k})\;\omega_k\big)^a\\
&=\sum_{a\in\mathbb{N}^n}D^ap(x_0+\Omega t_0)\cdot \prod_{j=1}^n\big((t_1-t_{01})\;\omega_{j1}+\cdots +(t_k-t_{0k})\;\omega_{jk}\big)^{a_j}.
\end{aligned}
\end{eqnarray*}
With $a \in \mathbb{N}^n$ and $j$ fixed we have
$$
\big((t_1-t_{01})\;\omega_{j1}+\cdots  +(t_k-t_{0,k})\;\omega_{jk}\big)^{a_j}
= \sum_{b_{j1} + \cdots + b_{jk} = a_j} 
\left[(t_{1} -t_{01}) \omega_{j1}\right]^{b_{j1}} \cdots 
\left[(t_{k} -t_{0k}) \omega_{jk}\right]^{b_{jk}}
 $$
 and the product in $j$ of these terms is therefore
 $$ \prod_{j=1}^n \sum_{b_{j1} + \cdots + b_{jk} = a_j} 
\left[(t_{1} -t_{01}) \omega_{j1}\right]^{b_{j1}} \cdots 
\left[(t_{k} -t_{0k}) \omega_{jk}\right]^{b_{jk}}.
 $$
 With $a$ still fixed, let $B$ be the $n\times k$ matrix whose entries are $b_{jl}$. Denote its rows by $b_j \in \mathbb{N}^k$ and its columns by $\alpha_l \in \mathbb{N}^n$, so that for each $j$ the entries $b_{jl}$ of $b_j$ satisfy $\sum_{l=1}^k b_{jl} =a_j$. The previous displayed expression becomes
 \begin{eqnarray*}
    \begin{aligned}
        \sum_{\alpha_1 + \cdots + \alpha_k = a}&\left[(t_{1} -t_{01})^{\sum_{j=1}^n b_{j1}} \prod_{j=1}^n\omega_{j1}^{b_{j1}}\right] \cdots 
        \left[(t_{k} -t_{0k})^{\sum_{j=1}^n b_{jk}} \prod_{j=1}^n \omega_{jk}^{b_{jk}}\right]\\
       &= \sum_{\alpha_1 + \cdots + \alpha_k = a} (t-t_0)^{\sum_{j=1}^n b_j} \omega_1^{\alpha_1} \cdots \omega_k^{\alpha_k}\\
       & = \sum_{m \in \mathbb{N}^k} (t-t_0)^m\sum_{\alpha_1 + \dots + \alpha_k = a, \sum_{j=1}^n b_j = m} \omega_1^{\alpha_1} \cdots \omega_k^{\alpha_k}\\
       &= \sum_{m \in \mathbb{N}^k} (t-t_0)^m\sum_{a= \alpha_1 + \dots + \alpha_k, |\alpha_l|= m_l  \; \forall \; l} \omega_1^{\alpha_1} \cdots \omega_k^{\alpha_k}.
    \end{aligned}
 \end{eqnarray*}
Therefore, summing over $a$,
 \begin{eqnarray*}
\begin{aligned}
p_{|_{P_{x_0}}}(t)
=\sum_{m\in\mathbb{N}^k}\left(\sum_{a=\alpha_1+\cdots +\alpha_k\in\mathbb{N}^n:\;|\alpha_i|=m_i\;\forall \;i} D^ap(x_0+\Omega t_0)\;\omega_1^{\alpha_1}\cdots \omega_k^{\alpha_k}\right)(t-t_0)^m.
\end{aligned}
\end{eqnarray*}
Consequently, for any $m\in\mathbb{N}^k$, $D^m\big(p_{|_{P_{x_0}}}\big)(t_0)$ equals the coefficient of $(t-t_0)^m$ in the last expression above, and we are done.

\end{proof}

\subsection{Directional derivatives.}
As with standard derivatives, directional derivatives can be understood algebraically in Euclidean space and can therefore be meaningfully defined in all field settings. In particular, it is easy to see that for all linearly independent vectors $\omega_1,\ldots,\omega_n$ in $\mathbb{R}^n$ and any $a=(a_1,\ldots,a_n)\in\mathbb{N}^n$, it holds that $$(\omega_1\cdot\nabla)^{a_1}\cdots (\omega_n\cdot\nabla)^{a_n}p(x_0)=a!\; D^a(p\circ L)(L^{-1}x_0)
$$
where $L:\mathbb{R}^n\rightarrow\mathbb{R}^n$ is the linear isomorphism with $L(e_i)=\omega_i$. This observation leads to the following definition:

\begin{definition} \emph{\textbf{(Directional Hasse derivative.)}} Let $\mathbb{F}$ be a field, $n\geq 1$ and $p\in\mathbb{F}[x_1,\ldots,x_n]$. Suppose that $\omega_1,\ldots,\omega_n\in\mathbb{F}^n$ are linearly independent vectors, and let $L:\mathbb{F}^n\rightarrow\mathbb{F}^n$ be the linear isomorphism with $L(e_i)=\omega_i$ for all $i=1,\ldots,n$. For each $a=(a_1,\ldots,a_n)\in \mathbb{N}^n$, we define
\begin{equation*}
(\omega_1\cdot\nabla)^{a_1}\cdots (\omega_n\cdot\nabla)^{a_n}p(x):=D^a(p\circ L)(L^{-1}x)\in \mathbb{F}[x_1,\ldots,x_n].
\end{equation*}
\end{definition}
Sometimes we write this more succinctly as 
\begin{equation*}
(\boldsymbol{\omega}\cdot\nabla)^ap:=(\omega_1\cdot\nabla)^{a_1}\cdots (\omega_n\cdot\nabla)^{a_n}p
\end{equation*}
where $\boldsymbol{\omega}:=(\omega_1,\ldots,\omega_n)$. 
Note that, for any $a=(a_1,\ldots,a_n)\in\mathbb{N}^n$, this definition introduces the alternative notation $(e_1\cdot\nabla)^{a_1}\cdots (e_n\cdot\nabla)^{a_n}p(x_0)$ for $D^ap(x_0)$.  

A directional derivative can easily be expressed in terms of standard Hasse derivatives, and more generally in terms of directional derivatives in another set of fixed directions, as follows.

\begin{lemma} \label{directional in terms of standard}
Let $\mathbb{F}$ be a field, $n\geq 1$ and $p\in\mathbb{F}[x_1,\ldots,x_n]$. For any linearly independent vectors $\omega_1,\ldots,\omega_n\in\mathbb{F}^n$, for any $(a_1,\ldots,a_n)\in \mathbb{N}^n$, the equality
\begin{equation*}
    (\omega_1\cdot\nabla)^{a_1}\cdots (\omega_n\cdot\nabla)^{a_n}p(x)=\sum_{\tilde{a}=\alpha_1+\cdots+\alpha_n\in\mathbb{N}^n:|\alpha_i|=a_i\;\forall i}D^{{\tilde{a}}}p(x)\;\omega_1^{\alpha_1}\cdots\omega_n^{\alpha_n}
\end{equation*}
holds in $\mathbb{F}[x_1,\ldots,x_n]$.
\end{lemma}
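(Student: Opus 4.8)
The plan is to deduce this identity directly from Lemma~\ref{restricting on plane} in the special case $k=n$.

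First I would introduce the linear isomorphism $L:\mathbb{F}^n\to\mathbb{F}^n$ with $L(e_i)=\omega_i$ for each $i$, which is invertible precisely because the $\omega_i$ are linearly independent. Writing $L$ as the $n\times n$ matrix $\Omega$ whose columns are $\omega_1,\ldots,\omega_n$, we have $(p\circ L)(t)=p(\Omega t)$ for every $t=(t_1,\ldots,t_n)^T$. But this is exactly the restriction $p_{|_{P_0}}$ of $p$ to the $n$-plane $P_0=\mathbb{F}^n$ through $0$ ``spanned by $\omega_1,\ldots,\omega_n$'' in the notation of Lemma~\ref{restricting on plane}, taken with $x_0=0$ and $k=n$.

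Next I would apply Lemma~\ref{restricting on plane} with these choices: for every $m\in\mathbb{N}^n$ it yields
\begin{equation*}
D^m(p\circ L)(t)=\sum_{\substack{\tilde a=\alpha_1+\cdots+\alpha_n\in\mathbb{N}^n\\ |\alpha_i|=m_i\ \forall i}}D^{\tilde a}p(\Omega t)\;\omega_1^{\alpha_1}\cdots\omega_n^{\alpha_n}
\end{equation*}
as an identity in $\mathbb{F}[t_1,\ldots,t_n]$. I would then specialise $m=(a_1,\ldots,a_n)$ and substitute the polynomial expression $t=L^{-1}x$ into both sides (legitimate, since both sides are obtained by composing polynomials with a fixed polynomial map, so the identity is preserved). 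Since $\Omega(L^{-1}x)=x$, the right-hand side becomes $\sum_{\tilde a=\alpha_1+\cdots+\alpha_n,\ |\alpha_i|=a_i}D^{\tilde a}p(x)\,\omega_1^{\alpha_1}\cdots\omega_n^{\alpha_n}$, while by the very definition of the directional Hasse derivative the left-hand side is $(\omega_1\cdot\nabla)^{a_1}\cdots(\omega_n\cdot\nabla)^{a_n}p(x)$. Equating the two gives the claimed identity in $\mathbb{F}[x_1,\ldots,x_n]$.

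There is essentially no hard step here. The only points needing (routine) care are the bookkeeping identification of $p\circ L$ with the plane-restriction $p_{|_{P_0}}$, so that Lemma~\ref{restricting on plane} applies verbatim, and the observation that substituting the polynomial $L^{-1}x$ for the indeterminate $t$ in a polynomial identity over $\mathbb{F}$ preserves it. Alternatively, one could give a self-contained proof by repeating the Taylor-expansion computation from the proof of Lemma~\ref{restricting on plane} with $k=n$ and $x_0=0$, but invoking that lemma is cleaner.
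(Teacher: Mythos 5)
Your proof is correct and matches the paper's own argument: the paper likewise disposes of this lemma as "a simple application of Lemma~\ref{restricting on plane} in the case $k=n$", and your write-up simply supplies the routine details (identifying $p\circ L$ with $p_{|_{P_{0}}}$ for $x_0=0$, and substituting $t=L^{-1}x$) that the paper leaves implicit.
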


This is a simple application of Lemma~\ref{restricting on plane} in the case $k=n$ for the polynomial $(\omega_1\cdot\nabla)^{a_1}\cdots (\omega_n\cdot\nabla)^{a_n}p$, and easily implies the more general identity
\begin{equation}\label{leading to multiplicity invariance}
    (\boldsymbol{\omega}\cdot \nabla)^ap(x)=\sum_{\tilde{a}=\alpha_1+\cdots+\alpha_n\in\mathbb{N}^n:|\alpha_i|=a_i\;\forall i}(\boldsymbol{\overline{\omega}}\cdot \nabla)^{\tilde{a}}p(x)\;\widetilde{\omega}_1^{\alpha_1}\cdots\widetilde{\omega}_n^{\alpha_n}
\end{equation}
in $\mathbb{F}[x_1,\ldots,x_n]$, for all $n$-tuples $\boldsymbol{\omega}=(\omega_1,\ldots,\omega_n)$ and $\boldsymbol{\overline{\omega}}=(\overline{\omega_1},\ldots,\overline{\omega_n})$ of linearly independent vectors in $\mathbb{F}^n$, where for each $j$, $\widetilde{\omega}_j=L\overline{L}^{-1}(e_j)$, where $L$ is the linear isomorphism of $\mathbb{F}^n$ sending each $e_i$ to $\omega_i$, and $\overline{L}$ the linear isomorphism of $\mathbb{F}^n$ sending each $e_i$ to $\overline{\omega_i}$.

\begin{remark} \label{transverse derivative definition}
{\rm Let $1\leq k\leq n$,  $(a_{k+1},\ldots,a_n)\in\mathbb{N}^{n-k}$ and let $\omega_{k+1},\ldots,\omega_n\in \mathbb{F}^n$ be linearly independent. The above lemma implies that the polynomial
\begin{equation}\label{eq:transverse derivative}
    (\omega_1\cdot\nabla)^0\cdots (\omega_k\cdot\nabla)^0\cdot (\omega_{k+1}\cdot\nabla)^{a_{k+1}}\cdots (\omega_n\cdot\nabla)^{a_n}p\in\mathbb{F}[x_1,\ldots,x_n]
\end{equation}
is independent of the choice of vectors $\omega_1,\ldots,\omega_k\in\mathbb{F}^n$ with the property that ${\rm span}\{\omega_1,\ldots,\omega_n\}=\mathbb{F}^n$, as one would expect. We thus henceforth denote any polynomial in \eqref{eq:transverse derivative} by
\begin{equation*}
    (\omega_{k+1}\cdot\nabla)^{a_{k+1}}\cdots (\omega_n\cdot\nabla)^{a_n}p.
\end{equation*}
It follows that
\begin{equation*}
    (\omega_{k+1}\cdot\nabla)^{a_{k+1}}\cdots (\omega_n\cdot\nabla)^{a_n}p(x)=D^a(p\circ L)(L^{-1}x)
\end{equation*}
 where $a:=(0,\ldots,0,a_{k+1},\ldots,a_n)\in\mathbb{N}^n$, for all linear isomorphisms $L:\mathbb{F}^n\rightarrow \mathbb{F}^n$ such that $L(e_i)=\omega_i$ for $i=k+1,\ldots,n$.
}
\end{remark}

It will be seen that directional derivatives enjoy to a large extent properties analogous to those of standard directional derivatives in Euclidean space.

\subsection{Restrictions of derivatives of polynomials to planes.}
Restrictions of directional Hasse derivatives of polynomials to planes can be themselves viewed as polynomials in a natural way.

\begin{definition} \label{def: transverse directions}
{\rm Let $\mathbb{F}$ be a field, $n\geq 1$, $1\leq k\leq n$ and $p\in\mathbb{F}[x_1,\ldots,x_n]$. Let $P=x_0+{\rm span}\{\omega_1,\ldots,\omega_k\}$ be a $k$-dimensional plane in $\mathbb{F}^n$. We say that the vectors $\omega_{k+1},\ldots,\omega_n\in\mathbb{F}^n$ are \emph{transverse to }$P$ if, together with $\omega_1,\ldots,\omega_k$, they span $\mathbb{F}^n$.}
\end{definition}

\begin{definition} \emph{\textbf{(Restrictions of directional derivatives of polynomials to planes.)}} \label{restriction of dhd on a plane} {\rm Let $\mathbb{F}$ be a field, $n\geq 1$, $1\leq k\leq n$ and $p\in\mathbb{F}[x_1,\ldots,x_n]$. Let $P=x_0+{\rm span}\{\omega_1,\ldots,\omega_k\}$ be a $k$-dimensional plane in $\mathbb{F}^n$. Let $\Omega$ be the $n\times k$ matrix with columns $\omega_1,\ldots,\omega_k$. For any vectors $\omega_{k+1},\ldots,\omega_n\in\mathbb{F}^n$ transverse to $P$ and for any $a=(a_1,\ldots,a_n)\in\mathbb{N}^n$, define the polynomial 
\begin{equation*}
    (\omega_1\cdot\nabla)^{a_1}\cdots (\omega_n\cdot\nabla)^{a_n}p_{|_{P_{x_0,\Omega}}}\in \mathbb{F}[t_1,\ldots,t_k]
\end{equation*}
by
\begin{equation*}
(\omega_1\cdot\nabla)^{a_1}\cdots (\omega_n\cdot\nabla)^{a_n}p_{|_{P_{x_0,\Omega}}}(t_1,\ldots,t_k):=(\omega_1\cdot\nabla)^{a_1}\cdots (\omega_n\cdot\nabla)^{a_n}p(x_0+t_1\omega_1+\cdots +t_k\omega_k).
\end{equation*}
}
\end{definition}

In the proof of Theorem~\ref{multijoints}, for $p\in\mathbb{F}[x_1,\ldots,x_n]$ we employ the notation $p_{|_{P}}$ to denote the standard restriction of the function $p:\mathbb{F}^n\rightarrow\mathbb{F}$ to $P$. In this appendix, however, the more elaborate notation $p_{|_{P_{x_0,\Omega}}}$ (and also the form $p_{|_{P_{x_0}}}$ used in Lemma~\ref{restricting on plane}) is reserved to denote the polynomial in $\mathbb{F}[t_1,\ldots,t_k]$ given by the above definition. (Observe that, by Lemma~\ref{infinite}, in the case where $\mathbb{F}$ is infinite, $p_{|_{P_{x_0,\Omega}}}$ is the zero polynomial in $\mathbb{F}[t_1,\ldots,t_k]$ if and only if the function $p_{|_{P}}$ is zero. This property is independent of the particular $x_0$ and $\Omega$ used to define $P$. Likewise, ${\rm deg} \, p_{|_{P_{x_0, \Omega}}}$, and the multiplicity of $p_{|_{P_{x_0, \Omega}}}$ (which will be discussed in Lemma~\ref{restriction properties}) at any point of $\mathbb{F}^k$, are 
independent of the particular $x_0, \Omega$ used to define $P$.)

\begin{remark} \label{remark on restrictions}
{\rm 
Using the above notation, and recalling that
\begin{equation*}
    (\omega_1\cdot\nabla)^{a_1}\cdots(\omega_n\cdot\nabla)^{a_n}p(x):=D^a(p\circ L)(L^{-1}x)
\end{equation*}
where $L:\mathbb{F}^n\rightarrow\mathbb{F}^n$ is the linear isomorphism with $L(e_i)=\omega_i$ for all $i=1,\ldots,n$, it follows that
\begin{eqnarray*} \label{eq:isomorphism language for restriction}
\begin{aligned}
(\omega_1\cdot\nabla)^{a_1}\cdots (\omega_n\cdot\nabla)^{a_n}p_{|_{P_{x_0,\Omega}}}(t_1,\ldots,t_k)&=D^a(p\circ L)[L^{-1}(x_0+t_1\omega_1+\cdots +t_k\omega_k)]\\
&=D^a(p\circ L)(L^{-1}x_0+t_1e_1+\cdots +t_ke_k).
\end{aligned}
\end{eqnarray*}
Of particular interest to us will be restrictions to $P$ of directional derivatives of the form
\begin{equation*}
    (\omega_{k+1}\cdot\nabla)^{a_{k+1}}\cdots (\omega_n\cdot\nabla)^{a_n}p,
\end{equation*}
i.e. derivatives in directions transverse to $P$. Recall that by Remark~\ref{transverse derivative definition} the equality
\begin{equation*}
    (\omega_{k+1}\cdot\nabla)^{a_{k+1}}\cdots (\omega_n\cdot\nabla)^{a_n}p=(\omega_1\cdot\nabla)^0\cdots(\omega_k\cdot\nabla)^0(\omega_{k+1}\cdot\nabla)^{a_{k+1}}\cdots (\omega_n\cdot\nabla)^{a_n}p
\end{equation*}
holds, hence
\begin{eqnarray*} 
   \begin{aligned}
        (\omega_{k+1}\cdot\nabla)^{a_{k+1}}\cdots (\omega_n\cdot\nabla)^{a_n}p_{|_{P_{x_0,\Omega}}}(t_1,\ldots,t_k)=D^a(p\circ L)(L^{-1}x_0+t_1e_1+\cdots +t_ke_k)
   \end{aligned}
\end{eqnarray*}
for the above isomorphism $L$ and for $a=(0,\ldots,0,a_{k+1},\ldots,a_n)$.
}
\end{remark}

\begin{lemma} \label{lemma derivatives of restrictions}
Let $\mathbb{F}$ be a field, $n\geq 1$, $1\leq k\leq n$ and $p\in\mathbb{F}[x_1,\ldots,x_n]$. Let $P=x_0+{\rm span}\{\omega_1,\ldots,\omega_k\}$ be a $k$-dimensional plane in $\mathbb{F}^n$ and denote by $\Omega$ the $n\times k$ matrix with columns $\omega_1,\ldots,\omega_k$. For every $\omega_{k+1},\ldots,\omega_n\in\mathbb{F}^n$ transverse to $P$ and all $a=(a_1,\ldots,a_n)\in\mathbb{N}^{n}$, the equality
\begin{equation*}D^{(a_1,\ldots,a_k)}\big[(\omega_{k+1}\cdot\nabla)^{a_{k+1}}\cdots (\omega_n\cdot\nabla)^{a_n}p_{|_{P_{x_0,\Omega}}}\big]=(\omega_1\cdot\nabla)^{a_1}\cdots (\omega_n\cdot\nabla)^{a_n}p_{|_{P_{x_0,\Omega}}}.
\end{equation*}
holds in $\mathbb{F}[t_1,\ldots,t_k]$.
\end{lemma}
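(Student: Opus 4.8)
The plan is to reduce the identity \(D^{(a_1,\ldots,a_k)}\big[(\omega_{k+1}\cdot\nabla)^{a_{k+1}}\cdots(\omega_n\cdot\nabla)^{a_n}p_{|_{P_{x_0,\Omega}}}\big]=(\omega_1\cdot\nabla)^{a_1}\cdots(\omega_n\cdot\nabla)^{a_n}p_{|_{P_{x_0,\Omega}}}\) to a statement about ordinary Hasse derivatives of a single polynomial, via the linear isomorphism \(L\) with \(L(e_i)=\omega_i\). By Remark~\ref{remark on restrictions}, writing \(q:=p\circ L\) and \(y_0:=L^{-1}x_0\), the right-hand side evaluated at \((t_1,\ldots,t_k)\) equals \(D^{(a_1,\ldots,a_n)}q\,(y_0+t_1e_1+\cdots+t_ke_k)\), and the inner transverse derivative on the left equals \(D^{(0,\ldots,0,a_{k+1},\ldots,a_n)}q\,(y_0+t_1e_1+\cdots+t_ke_k)\). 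So the whole claim becomes: applying the \(k\)-variable Hasse operator \(D^{(a_1,\ldots,a_k)}\) in the variables \(t_1,\ldots,t_k\) to the \(k\)-variable polynomial \(t\mapsto (D^{(0,\ldots,0,a_{k+1},\ldots,a_n)}q)(y_0+t_1e_1+\cdots+t_ke_k)\) yields \(t\mapsto (D^{(a_1,\ldots,a_n)}q)(y_0+t_1e_1+\cdots+t_ke_k)\).

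First I would identify the \(k\)-variable polynomial in question as a restriction to a coordinate \(k\)-plane of the \(n\)-variable polynomial \(D^{(0,\ldots,0,a_{k+1},\ldots,a_n)}q\), namely the restriction along \(\Omega_0 = (e_1\mid\cdots\mid e_k)\) through \(y_0\). Then I would invoke Lemma~\ref{restricting on plane} with this coordinate matrix: since \(\omega_i^{\alpha_i}\) for the standard basis vectors forces \(\alpha_i = m_i e_i\), the sum in Lemma~\ref{restricting on plane} collapses to a single term, giving \(D^m\big[(D^{(0,\ldots,0,a_{k+1},\ldots,a_n)}q)_{|_{P_{y_0,\Omega_0}}}\big](t) = D^{(m_1,\ldots,m_k,0,\ldots,0)}\big(D^{(0,\ldots,0,a_{k+1},\ldots,a_n)}q\big)(y_0+t_1e_1+\cdots+t_ke_k)\). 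Taking \(m=(a_1,\ldots,a_k)\), this is \(D^{(a_1,\ldots,a_k,0,\ldots,0)}\big(D^{(0,\ldots,0,a_{k+1},\ldots,a_n)}q\big)\) restricted to the plane.

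Next I would apply part (iii) of Proposition~\ref{properties} (the composition law \(D^i(D^jp)=\binom{i+j}{j}D^{i+j}p\)) with \(i=(a_1,\ldots,a_k,0,\ldots,0)\) and \(j=(0,\ldots,0,a_{k+1},\ldots,a_n)\); here \(i+j=(a_1,\ldots,a_n)\) and the binomial coefficient \(\binom{i+j}{j}=\binom{i+j}{i}\) is \(1\) because in each coordinate one of \(i,j\) is zero, so the multinomial factors \(\binom{a_\ell}{0}=\binom{a_\ell}{a_\ell}=1\). Hence \(D^{(a_1,\ldots,a_k,0,\ldots,0)}\big(D^{(0,\ldots,0,a_{k+1},\ldots,a_n)}q\big)=D^{(a_1,\ldots,a_n)}q\). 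Restricting this to the plane through \(y_0\) along \(\Omega_0\) and translating back through \(L\) via Remark~\ref{remark on restrictions} recovers exactly the right-hand side of the asserted identity, completing the proof. I do not anticipate a serious obstacle here: the only point requiring care is bookkeeping — making sure the collapse in Lemma~\ref{restricting on plane} is correctly invoked for the \emph{coordinate} plane (so that the transverse directions \(e_{k+1},\ldots,e_n\) do not enter the restriction sum) and that the binomial coefficient in Proposition~\ref{properties}(iii) is indeed \(1\) because the two multi-indices have disjoint supports.
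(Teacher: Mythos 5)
Your proposal is correct and follows essentially the same route as the paper's own proof: reduce via the isomorphism $L$ to a statement about $q=p\circ L$, apply Lemma~\ref{restricting on plane} with the coordinate matrix so that the sum collapses to the single term with $\alpha_i=a_ie_i$, and then use Proposition~\ref{properties}(iii) with disjoint supports so the binomial coefficient is $1$. No gaps.
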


\begin{proof}
Let $L:\mathbb{F}^n\rightarrow \mathbb{F}^n$ be the linear isomorphism with $L(e_i)=\omega_i$ for all $i=1,\ldots,n$. The statement of the lemma is that
\begin{equation*}
    D^{(a_1,\ldots,a_k)}\big[(\omega_{k+1}\cdot\nabla)^{a_{k+1}}\cdots (\omega_n\cdot\nabla)^{a_n}p_{|_{P_{x_0,\Omega}}}\big](t)=(\omega_1\cdot\nabla)^{a_1}\cdots (\omega_n\cdot\nabla)^{a_n}p(x_0+\Omega t)
\end{equation*}
in $\mathbb{F}[t_1,
\ldots,t_k]$, i.e. that the polynomial
\begin{eqnarray*}
    \begin{aligned}
        g(t_1,\ldots,t_k):&=(\omega_{k+1}\cdot\nabla)^{a_{k+1}}\cdots (\omega_n\cdot\nabla)^{a_n}p_{|_{P_{x_0,\Omega}}}(t_1,\ldots,t_k)\\
        &=(\omega_{k+1}\cdot\nabla)^{a_{k+1}}\cdots (\omega_n\cdot\nabla)^{a_n}p(x_0+t_1\omega_1+\cdots+t_k\omega_k)\\
        &= D^{(0,\ldots,0,a_{k+1},\ldots,a_n)}(p\circ L)(L^{-1}x_0+t_1e_1+\cdots +t_ke_k)\in\mathbb{F}[t_1,\cdots,t_k]
    \end{aligned}
\end{eqnarray*}
satisfies
\begin{equation*}
    D^{(a_1,\ldots,a_k)}g(t_1,\ldots,t_k)=D^a(p\circ L)(L^{-1}x_0+t_1e_1+\cdots +t_ke_k).
\end{equation*}

By Lemma~\ref{restricting on plane},
$$D^{(a_1,\ldots,a_k)}g(t_1,\ldots,t_k)$$
$$=\sum_{a'=\alpha_1+\cdots +\alpha_k\in\mathbb{N}^n:\; |\alpha_i|=a_i}D^{a'}\big[D^{(0,\ldots,0,a_{k+1},\ldots,a_n)}(p\circ L)\big](L^{-1}x_0+t_1e_1+\cdots +t_ke_k)\;e_1^{\alpha_1}\cdots e_k^{\alpha_k}.
$$
Now, for each $i\in\{1,\ldots,k\}$, $e_i^{\alpha_i}$ equals 0 unless $\alpha_i=(0,\ldots,0,a_i,0,\ldots,0)$, with $a_i$ in the $i$-th coordinate. Therefore, only one term survives in the sum, and we have
\begin{eqnarray*}
\begin{aligned}
D^{(a_1,\ldots,a_k)}g(t)&=D^{(a_1,\ldots,a_k,0,\ldots,0)}\big[D^{(0,\ldots,0,a_{k+1},\ldots,a_n)}(p\circ L)\big](L^{-1}x_0+t_1e_1+\cdots +t_ke_k)\\
&=D^{a}(p\circ L)(L^{-1}x_0+t_1e_1+\cdots +t_ke_k)
\end{aligned}
\end{eqnarray*}
as required, where the last equality is due to property (iii) of Hasse derivatives.
\end{proof}

\subsection{Multiplicities of polynomials} We now turn to the notion of multiplicity (or order of vanishing) of a polynomial at a point. Our subsequent analysis will rely upon this notion. The definition of multiplicity for Euclidean space carries over directly to the setting of arbitrary fields when we use the Hasse derivative. In this subsection, let $\mathbb{F}$ be a field, and $n\geq 1$.

\begin{definition} \emph{\textbf{(Multiplicity.)}} {\rm Let $p\in\mathbb{F}[x_1,\ldots,x_n]$ and $x_0\in\mathbb{F}^n$. If $p\neq 0$, the \emph{multiplicity of $p$ at $x_0$}, denoted by ${\rm mult}(p,x_0)$, is the largest $m\in\mathbb{N}$ with the property that $D^ap(x_0)=0$ for all $a\in\mathbb{N}^n$ with $|a|<m$.
If $p(x_0) \neq 0$ we say that ${\rm mult}(p,x_0) = 0$. If $p=0$ we say that ${\rm mult}(p,x_0)=+\infty$ for all $x_0\in\mathbb{F}^n$.
}
\end{definition}

\begin{definition} \emph{\textbf{(Directional multiplicity.)}} 
{\rm Let $p\in\mathbb{F}[x_1,\ldots,x_n]$.
If $p\neq 0$, for an $n$-tuple $\boldsymbol{\omega}$ of linearly independent vectors in $\mathbb{F}^n$, define the \emph{directional multiplicity ${\rm mult}_{\boldsymbol{\omega}}(p,x_0)$} of $p$ at $x_0\in\mathbb{F}^n$} to be the largest $m\in\mathbb{N}$ with the property that $(\boldsymbol{\omega}\cdot \nabla)^ap(x_0)= 0$ for all $a\in\mathbb{N}^n$ with $|a|<m$. (We make the obvious modifications if $p(x_0)\neq 0$ or $p=0$.)

\end{definition}
Central to our analysis is the following proposition, which states that the multiplicity of a polynomial at a point is independent of the choice of coordinate system, and is a direct consequence of \eqref{leading to multiplicity invariance}. 

\begin{proposition} \emph{\textbf{(Multiplicity invariance.)}}\label{multiplicity invariance} Let $p\in\mathbb{F}[x_1,\ldots,x_n]$. For any $x_0\in\mathbb{F}^n$ and any linearly independent vectors $\omega_1,\ldots,\omega_n$ in $\mathbb{F}^n$, it holds that
\begin{equation*}
    {\rm mult}_{\omega_1,\ldots,\omega_n}(p,x_0)={\rm mult}(p,x_0).
\end{equation*}

\end{proposition}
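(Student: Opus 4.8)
\begin{proof}[Proof of Proposition~\ref{multiplicity invariance}]
If $p=0$ both sides are $+\infty$, and if $p(x_0)\neq 0$ both sides are $0$, so we may assume $p\neq 0$ and $p(x_0)=0$. Recall that, by definition, $D^ap=(e_1\cdot\nabla)^{a_1}\cdots(e_n\cdot\nabla)^{a_n}p$ where $e_1,\ldots,e_n$ is the standard basis, so that ${\rm mult}(p,x_0)={\rm mult}_{e_1,\ldots,e_n}(p,x_0)$. It therefore suffices to prove the more symmetric statement that ${\rm mult}_{\boldsymbol{\omega}}(p,x_0)={\rm mult}_{\boldsymbol{\overline{\omega}}}(p,x_0)$ for any two $n$-tuples $\boldsymbol{\omega}=(\omega_1,\ldots,\omega_n)$ and $\boldsymbol{\overline{\omega}}=(\overline{\omega_1},\ldots,\overline{\omega_n})$ of linearly independent vectors in $\mathbb{F}^n$; the proposition then follows by taking $\boldsymbol{\overline{\omega}}=(e_1,\ldots,e_n)$.

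The key point is the identity \eqref{leading to multiplicity invariance}, namely that
\begin{equation*}
(\boldsymbol{\omega}\cdot\nabla)^ap=\sum_{\tilde{a}=\alpha_1+\cdots+\alpha_n\in\mathbb{N}^n:\;|\alpha_i|=a_i\;\forall i}(\boldsymbol{\overline{\omega}}\cdot\nabla)^{\tilde{a}}p\;\widetilde{\omega}_1^{\alpha_1}\cdots\widetilde{\omega}_n^{\alpha_n}
\end{equation*}
as polynomials (and hence upon evaluation at $x_0$), where the coefficients $\widetilde{\omega}_1^{\alpha_1}\cdots\widetilde{\omega}_n^{\alpha_n}$ are scalars in $\mathbb{F}$ depending only on $\boldsymbol{\omega}$ and $\boldsymbol{\overline{\omega}}$, and not on $p$ or $x_0$. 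The decisive feature of the index set is that every $\tilde{a}$ occurring satisfies $|\tilde{a}|=|\alpha_1|+\cdots+|\alpha_n|=a_1+\cdots+a_n=|a|$; that is, the length of the multi-index is preserved. Now set $m:={\rm mult}_{\boldsymbol{\overline{\omega}}}(p,x_0)$, so that $(\boldsymbol{\overline{\omega}}\cdot\nabla)^{\tilde{a}}p(x_0)=0$ for all $\tilde{a}\in\mathbb{N}^n$ with $|\tilde{a}|<m$. Fix any $a\in\mathbb{N}^n$ with $|a|<m$. Evaluating the identity above at $x_0$, every term on the right-hand side involves a multi-index $\tilde{a}$ with $|\tilde{a}|=|a|<m$, so $(\boldsymbol{\overline{\omega}}\cdot\nabla)^{\tilde{a}}p(x_0)=0$, and therefore $(\boldsymbol{\omega}\cdot\nabla)^ap(x_0)=0$. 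Since this holds for all $a$ with $|a|<m$, we conclude ${\rm mult}_{\boldsymbol{\omega}}(p,x_0)\geq m={\rm mult}_{\boldsymbol{\overline{\omega}}}(p,x_0)$.

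The identity \eqref{leading to multiplicity invariance} is symmetric under interchanging the roles of $\boldsymbol{\omega}$ and $\boldsymbol{\overline{\omega}}$ (and correspondingly interchanging the roles of the isomorphisms $L$ and $\overline{L}$ used to define the vectors $\widetilde{\omega}_j$), so exactly the same argument with the two tuples swapped yields ${\rm mult}_{\boldsymbol{\overline{\omega}}}(p,x_0)\geq{\rm mult}_{\boldsymbol{\omega}}(p,x_0)$. Combining the two inequalities gives ${\rm mult}_{\boldsymbol{\omega}}(p,x_0)={\rm mult}_{\boldsymbol{\overline{\omega}}}(p,x_0)$, and taking $\boldsymbol{\overline{\omega}}$ to be the standard basis completes the proof.
\end{proof}

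\medskip

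\noindent\textbf{Remarks on the plan.} The argument above is essentially forced: everything reduces to the single identity \eqref{leading to multiplicity invariance}, and the only thing one needs to extract from it is (a) that it is a \emph{linear} relation whose coefficients are scalars independent of $p$ and $x_0$, and (b) that it is \emph{length-homogeneous}, i.e.\ $(\boldsymbol{\omega}\cdot\nabla)^ap$ is expressed purely in terms of $(\boldsymbol{\overline{\omega}}\cdot\nabla)^{\tilde{a}}p$ with $|\tilde{a}|=|a|$. Given these two facts the proof is a two-line inequality in each direction. Consequently there is no genuine obstacle here; the only point requiring a small amount of care is the bookkeeping needed to see that swapping the two bases really does give the reverse expression of the same shape (which is built into the derivation of \eqref{leading to multiplicity invariance} from Lemma~\ref{directional in terms of standard} via Lemma~\ref{restricting on plane}), and this has already been recorded in the text preceding the statement. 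The trivial cases $p=0$ and $p(x_0)\neq 0$, and the identification $D^a=(e_1\cdot\nabla)^{a_1}\cdots(e_n\cdot\nabla)^{a_n}$, should be dispatched first so that the main computation is uncluttered.
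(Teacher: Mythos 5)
Your proof is correct and follows exactly the route the paper intends: the paper itself states Proposition~\ref{multiplicity invariance} as a direct consequence of the length-homogeneous change-of-basis identity \eqref{leading to multiplicity invariance}, and your write-up simply fills in the (correct) details — the scalar coefficients, the preservation of $|a|$, the symmetry in the two bases, and the trivial cases. No gaps; the argument matches the paper's.
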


\subsection{Vanishing properties of restrictions of directional derivatives of polynomials to planes}

Counting points on a plane can be carried out using polynomials that vanish at the points of interest, but not identically on the plane. For a polynomial in $\mathbb{F}[x_1,\ldots,x_n]$ and a $k$-plane in $\mathbb{F}^n$, the following lemma facilitates the identification of directional derivatives with non-zero restrictions (when viewed as polynomials) on the plane. Under certain conditions, it also provides meaningful information on the order of vanishing of such restrictions.

\begin{lemma} \label{restriction properties}
Let $\mathbb{F}$ be a field, $n\geq 1$, $1\leq k\leq n$ and $p\in\mathbb{F}[x_1,\ldots,x_n]$. Let $P=x_0+{\rm span}\{\omega_1,\ldots,\omega_k\}$ be a $k$-dimensional plane in $\mathbb{F}^n$, and denote by $\Omega$ the $n\times k$ matrix with columns $\omega_1,\ldots,\omega_k$. Let $\omega_{k+1},\ldots,\omega_n$ be vectors in $\mathbb{F}^n$ transverse to $P$ and let $a=(a_1,\ldots,a_n)\in\mathbb{N}^{n}$.

\begin{enumerate}[{\rm (i)}]
\item \emph{\textbf{(Identifying derivatives with non-zero restrictions)}} If $(\omega_1\cdot\nabla)^{a_1}\cdots (\omega_n\cdot\nabla)^{a_n}p(x_0)\neq 0$, then
\begin{equation*}(\omega_{k+1}\cdot\nabla)^{a_{k+1}}\cdots (\omega_n\cdot\nabla)^{a_n}p_{|_{P_{x_0,\Omega}}}\neq 0.
\end{equation*}
\item \emph{\textbf{(Multiplicities)}} For all $y=x_0+\Omega t\in P$, it holds that
\begin{equation*}{\rm mult}\left( (\omega_{k+1}\cdot\nabla)^{a_{k+1}}\cdots (\omega_n\cdot\nabla)^{a_n}p_{|_{P_{x_0,\Omega}}},t\right)\geq{\rm mult}(p,y)-(a_{k+1}+\cdots +a_n).
\end{equation*}
\end{enumerate}
\end{lemma}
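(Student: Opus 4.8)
The plan is to deduce both assertions from Lemma~\ref{lemma derivatives of restrictions} together with the multiplicity invariance Proposition~\ref{multiplicity invariance}, so that the whole argument reduces to elementary bookkeeping with multi-indices, once one is careful to distinguish a polynomial from its evaluations. Throughout I would abbreviate $q := (\omega_{k+1}\cdot\nabla)^{a_{k+1}}\cdots (\omega_n\cdot\nabla)^{a_n}p_{|_{P_{x_0,\Omega}}} \in \mathbb{F}[t_1,\ldots,t_k]$ and $\boldsymbol{\omega} := (\omega_1,\ldots,\omega_n)$, and recall that under the convention of Remark~\ref{transverse derivative definition} the transverse derivative $(\omega_{k+1}\cdot\nabla)^{a_{k+1}}\cdots (\omega_n\cdot\nabla)^{a_n}p$ is the polynomial $(\omega_1\cdot\nabla)^0\cdots(\omega_k\cdot\nabla)^0(\omega_{k+1}\cdot\nabla)^{a_{k+1}}\cdots (\omega_n\cdot\nabla)^{a_n}p$; this is what makes the multi-index bookkeeping below line up.

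For part (i), I would apply Lemma~\ref{lemma derivatives of restrictions} with the multi-index $(a_1,\ldots,a_k,a_{k+1},\ldots,a_n)$ to get the polynomial identity $D^{(a_1,\ldots,a_k)}q = (\omega_1\cdot\nabla)^{a_1}\cdots(\omega_n\cdot\nabla)^{a_n}p_{|_{P_{x_0,\Omega}}}$ in $\mathbb{F}[t_1,\ldots,t_k]$, and then evaluate at $t=0$ (the point of $P$ corresponding to $x_0$), obtaining $D^{(a_1,\ldots,a_k)}q(0) = (\omega_1\cdot\nabla)^{a_1}\cdots(\omega_n\cdot\nabla)^{a_n}p(x_0) \neq 0$ by hypothesis. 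Thus $D^{(a_1,\ldots,a_k)}q$ is not the zero polynomial; since $D^{(a_1,\ldots,a_k)}$ is linear (Proposition~\ref{properties}(i)) and hence sends the zero polynomial to the zero polynomial, $q$ cannot be the zero polynomial, which is exactly the claim.

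For part (ii), I would fix $y = x_0 + \Omega t \in P$, write $m := {\rm mult}(p,y)$ and $s := a_{k+1}+\cdots+a_n$, and assume $m > s$ (otherwise there is nothing to prove). It then suffices to show $D^b q(t) = 0$ for every $b = (b_1,\ldots,b_k) \in \mathbb{N}^k$ with $|b| < m - s$. Applying Lemma~\ref{lemma derivatives of restrictions} with the multi-index $(b_1,\ldots,b_k,a_{k+1},\ldots,a_n)$ gives $D^{b}q = (\omega_1\cdot\nabla)^{b_1}\cdots(\omega_k\cdot\nabla)^{b_k}(\omega_{k+1}\cdot\nabla)^{a_{k+1}}\cdots(\omega_n\cdot\nabla)^{a_n}p_{|_{P_{x_0,\Omega}}}$, and evaluating at $t$ yields $D^{b}q(t) = (\boldsymbol{\omega}\cdot\nabla)^{(b_1,\ldots,b_k,a_{k+1},\ldots,a_n)}p(y)$ by the definition of the restriction. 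By Proposition~\ref{multiplicity invariance}, ${\rm mult}_{\boldsymbol{\omega}}(p,y) = {\rm mult}(p,y) = m$, so every directional Hasse derivative $(\boldsymbol{\omega}\cdot\nabla)^c p$ with $|c| < m$ vanishes at $y$; since the multi-index $(b_1,\ldots,b_k,a_{k+1},\ldots,a_n)$ has length $|b| + s < m$, we conclude $D^{b}q(t) = 0$, and hence ${\rm mult}(q,t) \geq m - s$.

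I do not expect any genuine obstacle here: the entire content is packaged into Lemma~\ref{lemma derivatives of restrictions} and Proposition~\ref{multiplicity invariance}, which have already been established. The only things that need care are handling the transverse-derivative convention so that the multi-indices match correctly when invoking Lemma~\ref{lemma derivatives of restrictions}, maintaining the distinction between polynomial identities and pointwise evaluations, and dispatching the degenerate case ${\rm mult}(p,y) \leq a_{k+1}+\cdots+a_n$ in part (ii).
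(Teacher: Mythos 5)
Your proposal is correct and follows essentially the same route as the paper's own proof: both parts are deduced from the identity in Lemma~\ref{lemma derivatives of restrictions} (applied with the appropriate multi-index and evaluated at the relevant point of $P$), with Proposition~\ref{multiplicity invariance} supplying the equality ${\rm mult}_{\boldsymbol{\omega}}(p,y)={\rm mult}(p,y)$ needed for part (ii). Your extra remarks on the linearity step in (i) and the degenerate case in (ii) are just explicit versions of what the paper leaves implicit.
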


\begin{proof} Recall by Lemma~\ref{lemma derivatives of restrictions} that
\begin{equation} \label{derivatives of restrictions}
    D^{(a_1,\ldots,a_k)}\big[(\omega_{k+1}\cdot\nabla)^{a_{k+1}}\cdots (\omega_n\cdot\nabla)^{a_n}p_{|_{P_{x_0,\Omega}}}\big](t)=(\omega_1\cdot\nabla)^{a_1}\cdots (\omega_n\cdot\nabla)^{a_n}p(x_0+\Omega t)
\end{equation}
in $\mathbb{F}[t_1,\ldots,t_k]$. It follows that if $(\omega_1\cdot\nabla)^{a_1}\cdots (\omega_n\cdot\nabla)^{a_n}p(x_0)\neq 0$ then also \begin{equation*}
    D^{(a_1,\ldots,a_k)}\big[(\omega_{k+1}\cdot\nabla)^{a_{k+1}}\cdots(\omega_n\cdot\nabla)^{a_n}p_{|_{P_{x_0,\Omega}}}\big](0)\neq 0,
\end{equation*}
therefore $(\omega_{k+1}\cdot\nabla)^{a_{k+1}}\cdots(\omega_n\cdot\nabla)^{a_n}p_{|_{P_{x_0,\Omega}}}$ is not the zero polynomial. This establishes (i).

Furthermore, \eqref{derivatives of restrictions} implies that for any $t\in\mathbb{F}^k$
\begin{equation*}
    D^{(a_1',\ldots,a_k')}\big[(\omega_{k+1}\cdot\nabla)^{a_{k+1}}\cdots (\omega_n\cdot\nabla)^{a_n}p_{|_{P_{x_0,\Omega}}}\big](t)=0
\end{equation*}
for all $(a_1',\ldots,a_k')\in\mathbb{N}^k$ with 
\begin{eqnarray*}
   \begin{aligned}
       (a_1'+\ldots+a_k')+ (a_{k+1}+\cdots+a_n) &< {\rm mult}_{\omega_1,\ldots,\omega_n}(p,x_0+\Omega t)\\
       &={\rm mult}(p,x_0+\Omega t),
   \end{aligned}
\end{eqnarray*}
thereby directly implying (ii).

\end{proof}
Assertion (i) can be used to identify derivatives of $p$ with non-zero restrictions (when viewed as polynomials) to a plane $P$. Now, let $g:=(\omega_{k+1}\cdot\nabla)^{a_{k+1}}\cdots (\omega_n\cdot\nabla)^{a_n}p$ be such a derivative. This implies that for every $y\in P$ there exists $(a_1',\ldots,a_k')\in\mathbb{N}^k$ with
\begin{equation*}
    (\omega_1\cdot\nabla)^{a_1'}\cdots(\omega_k\cdot\nabla)^{a_k'}g(y)\neq 0,
\end{equation*}
and thus by (ii), ${\rm mult}(g,y)\geq {\rm mult}(p,y)-(a_{k+1}+\cdots +a_n)$, a quantity which may well be non-positive. And indeed, in general there can be no guarantee that $g$ vanishes at points $y\in P$ of interest (such as in the case where $p$ is a non-zero constant polynomial and $g=p$). 

If however $g$ is a derivative \textit{of minimal order} that does not identically on $P$, it transpires that the quantity ${\rm mult}(p,y)-(a_{k+1}+\cdots +a_n)$ is nonnegative, and is in fact positive under suitable conditions. This is made precise in Lemma~\ref{minimal derivative} below. 

\begin{remark} \label{considering different directions}
{\rm Observe that any directional derivative of $p$ of minimal order with non-zero restriction (when viewed as a polynomial) on $P$ is necessarily a derivative in directions transverse to $P$, i.e. it is of the form
\begin{equation*}
    (\overline{\omega_{k+1}}\cdot\nabla)^{m_{k+1}}\cdots (\overline{\omega_n}\cdot\nabla)^{m_n}p
\end{equation*}
where $\overline{\omega_{k+1}},\ldots,\overline{\omega_n}$ are vectors in $\mathbb{F}^n$ which, together with $\omega_1,\ldots,\omega_k$, span $\mathbb{F}^n$. Moreover, it follows by Lemma~\ref{restriction properties} (i) that if $p$ is not the zero polynomial, then, for any directions transverse to $P$, there exists a derivative of $p$ in these directions whose restriction to $P$ is not the zero polynomial.}

\end{remark}

\begin{lemma} \label{minimal derivative}Let $\mathbb{F}$ be a field, $n\geq 1$, $1\leq k\leq n$ and $p\in\mathbb{F}[x_1,\ldots,x_n]$ be non-zero. Let $P=x_0+{\rm span}\{\omega_1,\ldots,\omega_k\}$ be a $k$-dimensional plane in $\mathbb{F}^n$, and denote by $\Omega$ the $n\times k$ matrix with columns $\omega_1,\ldots,\omega_k$. Let $y=x_0+\Omega t\in P$. Fix $\omega_{k+1},\ldots,\omega_n\in\mathbb{F}^n$ transverse to $P$ and let $a=(a_1,\ldots,a_n)\in\mathbb{N}^n$ be of minimal length such that
\begin{equation*}
(\omega_1\cdot\nabla)^{a_1}\cdots (\omega_n\cdot\nabla)^{a_n}p(y)\neq 0.
\end{equation*}
Then, any directional derivative $\mathcal{D}p$ of $p$ of minimal order such that $\mathcal{D}p_{|_{P_{x_0,\Omega}}}\neq 0$ satisfies
\begin{equation*}
    {\rm mult}\left(\mathcal{D}p_{|_{P_{x_0,\Omega}}},t\right)\geq a_1+\cdots +a_k.
\end{equation*}
\end{lemma}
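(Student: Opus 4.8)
The plan is to run the Hasse-calculus analogue of the argument proving Lemma~\ref{minimal derivative_baby}, feeding in the machinery already assembled in this appendix: the restriction lemma (Lemma~\ref{restriction properties}), the multiplicity-invariance proposition (Proposition~\ref{multiplicity invariance}), and the observation in Remark~\ref{considering different directions} that a minimal-order directional derivative of $p$ whose restriction to $P$ is a non-zero polynomial must be a derivative purely in directions transverse to $P$.

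First I would set up the normalisations. By Remark~\ref{considering different directions} we may write $\mathcal{D}p = (\overline{\omega}_{k+1}\cdot\nabla)^{b_{k+1}}\cdots(\overline{\omega}_n\cdot\nabla)^{b_n}p$ for suitable $\overline{\omega}_{k+1},\ldots,\overline{\omega}_n$ transverse to $P$, and we set $b:=b_{k+1}+\cdots+b_n$ for the order of $\mathcal{D}p$. The minimality of $|a|$ in the hypothesis is precisely the statement that ${\rm mult}_{\omega_1,\ldots,\omega_n}(p,y)=a_1+\cdots+a_n$, so Proposition~\ref{multiplicity invariance} gives ${\rm mult}(p,y)=a_1+\cdots+a_n$.

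Next I would bound $b$. By Remark~\ref{transverse derivative definition}, $(\omega_{k+1}\cdot\nabla)^{a_{k+1}}\cdots(\omega_n\cdot\nabla)^{a_n}p$ is a directional derivative of $p$ of order $a_{k+1}+\cdots+a_n$; and, applying Lemma~\ref{restriction properties}(i) with $y$ taken as the base point of $P$ (which is legitimate, since replacing the base point only translates the restricted polynomial and so preserves the property of being the zero polynomial), its restriction to $P$ is not the zero polynomial. As $\mathcal{D}p$ has minimal order among directional derivatives of $p$ with non-zero restriction to $P$, this forces $b\le a_{k+1}+\cdots+a_n$, hence
\[
b \;\le\; (a_1+\cdots+a_n)-(a_1+\cdots+a_k) \;=\; {\rm mult}(p,y)-(a_1+\cdots+a_k).
\]

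Finally I would invoke the multiplicity half of the restriction lemma. Applying Lemma~\ref{restriction properties}(ii) with the transverse directions $\overline{\omega}_{k+1},\ldots,\overline{\omega}_n$ and the multi-index $(0,\ldots,0,b_{k+1},\ldots,b_n)$, at the point $y=x_0+\Omega t\in P$, gives
\[
{\rm mult}\!\left(\mathcal{D}p_{|_{P_{x_0,\Omega}}},\,t\right) \;\ge\; {\rm mult}(p,y)-b \;\ge\; a_1+\cdots+a_k,
\]
where the last inequality is the bound on $b$ from the previous step; this is exactly the assertion of the lemma. I expect the only genuinely delicate point to be the middle step: matching the minimality of $\mathcal{D}p$ --- a priori taken over \emph{all} directional derivatives in \emph{all} frames --- against the one concrete competitor $(\omega_{k+1}\cdot\nabla)^{a_{k+1}}\cdots(\omega_n\cdot\nabla)^{a_n}p$, and legitimising the use of Lemma~\ref{restriction properties}(i) even though the hypothesis is stated at $y$ rather than at the base point $x_0$. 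Both issues are resolved, respectively, by Remark~\ref{considering different directions} (which pins $\mathcal{D}p$ down to a transverse derivative, so that ``order'' is unambiguous) and by the translation-invariance of the vanishing of the restricted polynomial.
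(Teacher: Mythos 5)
Your proposal is correct and follows essentially the same route as the paper's own proof: express $\mathcal{D}p$ as a transverse derivative via Remark~\ref{considering different directions}, use Lemma~\ref{restriction properties}(i) at $y$ (together with the harmless change of base point) to bound the order of $\mathcal{D}p$ by $a_{k+1}+\cdots+a_n$, identify ${\rm mult}(p,y)=|a|$ via multiplicity invariance, and conclude with Lemma~\ref{restriction properties}(ii). The paper performs exactly these steps, including the same base-point observation $p_{|_{P_{y,\Omega}}}\neq 0 \iff p_{|_{P_{x_0,\Omega}}}\neq 0$.
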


\begin{proof}
Let $\mathcal{D}p$ be a directional derivative of $p$ of minimal order such that $\mathcal{D}p_{|_{P_{x_0,\Omega}}}\neq 0$ in $\mathbb{F}[t_1,\ldots,t_k]$. Then,
\begin{equation*}
    \mathcal{D}p=(\overline{\omega_{k+1}}\cdot\nabla)^{m_{k+1}}\cdots (\overline{\omega_n}\cdot\nabla)^{m_n}p
\end{equation*} for some $m=(m_{k+1},\ldots,m_n)\in\mathbb{N}^{n-k}$ and $\overline{\omega_{k+1}},\ldots,\overline{\omega_n}\in\mathbb{F}^n$ transverse to $P$. Fix $y=x_0+\Omega t\in\mathbb{F}^n$. Let $\omega_{k+1},\ldots,\omega_n\in\mathbb{F}^n$ be vectors transverse to $P$ and $a=(a_1,\ldots,a_n)\in\mathbb{N}^n$ be of minimal length such that 
\begin{equation*}
    (\omega_1\cdot\nabla)^{a_1}\cdots (\omega_n\cdot\nabla)^{a_n}p(y)\neq 0.
\end{equation*}
It follows by Lemma~\ref{restriction properties} that
\begin{equation*}
    (\omega_{k+1}\cdot\nabla)^{a_{k+1}}\cdots(\omega_n\cdot\nabla)^{a_n}p_{|_{P_{y,\Omega}}}\neq 0
\end{equation*}
or equivalently that
\begin{equation*}
    (\omega_{k+1}\cdot\nabla)^{a_{k+1}}\cdots(\omega_n\cdot\nabla)^{a_n}p_{|_{P_{x_0,\Omega}}}\neq 0.
\end{equation*}
The minimality property of $m$ implies that $m_{k+1}+\cdots +m_n\leq a_{k+1}+\cdots +a_n$. On the other hand, the minimality property of $a$ implies that $a_1+\cdots +a_n={\rm mult}_{\omega_1,\ldots,\omega_n}(p,y)$, therefore
\begin{eqnarray*}
    \begin{aligned}
       m_{k+1}+\ldots m_n&\leq (a_1+\ldots+a_n)-(a_1+\cdots+a_k)\\
       &={\rm mult}_{\omega_1,\ldots,\omega_n}(p,y)-(a_1+\cdots+a_k)\\
       &={\rm mult}(p,y)-(a_1+\cdots +a_k).
    \end{aligned}
\end{eqnarray*}
Combining assertion (ii) of Lemma~\ref{restriction properties} with the above, one deduces that
\begin{eqnarray*}
\begin{aligned}
{\rm mult}\left(\mathcal{D}p,t\right)&\geq{\rm mult}(p,y)-(m_{k+1}+\cdots +m_n)\\
&\geq a_1+\cdots +a_k,
\end{aligned}
\end{eqnarray*}
as required.

\end{proof}

Let $p\in\mathbb{F}[x_1,\ldots,x_n]$ be a non-zero polynomial. Lemma~\ref{restriction properties} (i) asserts that, for any plane $P$, if one takes enough derivatives of $p$ in directions transverse to $P$, then the resulting polynomial will not vanish identically on $P$. The following lemma (which is a direct consequence of \eqref{leading to multiplicity invariance}) states that the number of derivatives required to achieve this is independent of the directions along which we choose to differentiate.

\begin{lemma} \label{order invariance of minimal derivative}
Let $\mathbb{F}$ be a field, $n\geq 1$ and $p\in\mathbb{F}[x_1,\ldots,x_n]$ be non-zero. Let $1\leq k\leq n$ and $P:=x_0+{\rm span}\{\omega_1,\ldots,\omega_k\}$ be a $k$-dimensional plane in $\mathbb{F}^n$. Let $\omega_{k+1},\ldots,\omega_n\in\mathbb{F}^n$ be transverse to $P$, and let
\begin{equation*}
    \mathcal{D}p:=(\omega_{k+1}\cdot\nabla)^{m_{k+1}}\cdots (\omega_n\cdot\nabla)^{m_n}p
\end{equation*}
be a derivative of $p$ with the property that, amongst all derivatives of $p$ in directions $\omega_{k+1},\ldots,\omega_n$, $\mathcal{D}p$ is of minimal order so that $\mathcal{D}p_{|_{P_{x_0,\Omega}}}\neq 0$ in $\mathbb{F}[t_1,\ldots,t_k]$. Furthermore, let $\overline{\omega_{k+1}},\ldots,\overline{\omega_n}\in\mathbb{F}^n$ be transverse to $P$, and let
\begin{equation*}
    \overline{\mathcal{D}}p:=(\overline{\omega_{k+1}}\cdot\nabla)^{\lambda_{k+1}}\cdots (\overline{\omega_n}\cdot\nabla)^{\lambda_n}p
\end{equation*}
be a derivative of $p$ with the property that, amongst all derivatives of $p$ in directions $\overline{\omega_{k+1}},
\ldots,\overline{\omega_n}$, $\overline{\mathcal{D}}p$ is of minimal order such that  and $\overline{\mathcal{D}}p_{|_{P_{x_0,\Omega}}}\neq 0$ in $\mathbb{F}[t_1,\ldots,t_k]$. Then,
\begin{equation*}
    m_{k+1}+\cdots +m_n=\lambda_{k+1}+\cdots +\lambda_n.
\end{equation*}
\end{lemma}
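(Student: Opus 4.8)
The plan is to mirror the Euclidean argument of Lemma~\ref{order invariance of minimal derivative_baby}, replacing the ``multiply out with the binomial theorem'' step — which is unavailable because Hasse differentiation is not compositional ($D^i\circ D^j$ is only a binomial multiple of $D^{i+j}$) — by the pre-packaged change-of-basis identity \eqref{leading to multiplicity invariance} for directional Hasse derivatives. Since the two hypotheses are symmetric in the roles of the transverse systems $\omega_{k+1},\ldots,\omega_n$ and $\overline{\omega_{k+1}},\ldots,\overline{\omega_n}$, it suffices to prove one inequality, say $\lambda_{k+1}+\cdots+\lambda_n\leq m_{k+1}+\cdots+m_n$, and for this it is enough to establish the implication: if $b\in\mathbb{N}$ is such that $(\omega_{k+1}\cdot\nabla)^{\delta_{k+1}}\cdots(\omega_n\cdot\nabla)^{\delta_n}p_{|_{P_{x_0,\Omega}}}=0$ in $\mathbb{F}[t_1,\ldots,t_k]$ whenever $\delta_{k+1}+\cdots+\delta_n<b$, then also $(\overline{\omega_{k+1}}\cdot\nabla)^{\beta_{k+1}}\cdots(\overline{\omega_n}\cdot\nabla)^{\beta_n}p_{|_{P_{x_0,\Omega}}}=0$ whenever $\beta_{k+1}+\cdots+\beta_n<b$. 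Applying this with $b:=m_{k+1}+\cdots+m_n$ (minimal for the first system) yields $\lambda_{k+1}+\cdots+\lambda_n\geq m_{k+1}+\cdots+m_n$, and swapping the two systems gives the reverse inequality, hence equality.

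To prove the implication, first I would fix $\beta=(0,\ldots,0,\beta_{k+1},\ldots,\beta_n)$ with $\beta_{k+1}+\cdots+\beta_n<b$. By the transversality hypothesis, $\{\omega_1,\ldots,\omega_k,\omega_{k+1},\ldots,\omega_n\}$ and $\{\omega_1,\ldots,\omega_k,\overline{\omega_{k+1}},\ldots,\overline{\omega_n}\}$ are both bases of $\mathbb{F}^n$ sharing their first $k$ vectors. Using Remark~\ref{transverse derivative definition} to write $(\overline{\omega_{k+1}}\cdot\nabla)^{\beta_{k+1}}\cdots(\overline{\omega_n}\cdot\nabla)^{\beta_n}p$ as $(\omega_1\cdot\nabla)^{0}\cdots(\omega_k\cdot\nabla)^{0}(\overline{\omega_{k+1}}\cdot\nabla)^{\beta_{k+1}}\cdots(\overline{\omega_n}\cdot\nabla)^{\beta_n}p$, I would apply \eqref{leading to multiplicity invariance} (with the roles of the two bases interchanged) to express this polynomial as an $\mathbb{F}$-linear combination of derivatives $(\omega_1\cdot\nabla)^{\gamma_1}\cdots(\omega_n\cdot\nabla)^{\gamma_n}p$ indexed by decompositions $\gamma=\alpha_1+\cdots+\alpha_n$ with $|\alpha_i|=\beta_i$ for all $i$, so that $\gamma_1+\cdots+\gamma_n=\beta_{k+1}+\cdots+\beta_n<b$. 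The crucial structural observation is that since $\beta_i=0$ for $i\leq k$, the constraint $|\alpha_i|=\beta_i=0$ forces $\alpha_i=0$ for $i\leq k$; hence every index $\gamma$ that occurs has the form $(0,\ldots,0,\gamma_{k+1},\ldots,\gamma_n)$ with $\gamma_{k+1}+\cdots+\gamma_n<b$, i.e.\ every term is a derivative of $p$ in the directions transverse to $P$ given by $\omega_{k+1},\ldots,\omega_n$, of total order strictly less than $b$.

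Finally, I would restrict this polynomial identity to $P$ by substituting $x=x_0+\Omega t$; this yields a valid identity in $\mathbb{F}[t_1,\ldots,t_k]$, the scalar coefficients $\widetilde\omega_1^{\alpha_1}\cdots\widetilde\omega_n^{\alpha_n}$ being unaffected (and in fact the factors $\widetilde\omega_i^{\alpha_i}$ for $i\leq k$ equal $1$ since $\alpha_i=0$ there). By the standing hypothesis each summand $(\omega_{k+1}\cdot\nabla)^{\gamma_{k+1}}\cdots(\omega_n\cdot\nabla)^{\gamma_n}p_{|_{P_{x_0,\Omega}}}$ is the zero polynomial, so the left-hand side $(\overline{\omega_{k+1}}\cdot\nabla)^{\beta_{k+1}}\cdots(\overline{\omega_n}\cdot\nabla)^{\beta_n}p_{|_{P_{x_0,\Omega}}}$ is the zero polynomial, proving the implication and hence the lemma.

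The only real obstacle is conceptual rather than computational: because Hasse derivatives do not compose like classical derivatives, the binomial-expansion step of the Euclidean proof must be abandoned and the change of basis must be handled by the formula \eqref{leading to multiplicity invariance} as a black box; once that is set up, the proof is bookkeeping. A secondary point requiring care is that ``derivative in directions transverse to $P$'' is well-defined independently of the completion to a basis (Remark~\ref{transverse derivative definition}) and that restriction-to-$P$ of such a derivative, as an element of $\mathbb{F}[t_1,\ldots,t_k]$, commutes with the substitution $x=x_0+\Omega t$ as in Definition~\ref{restriction of dhd on a plane}; both are already available from the preceding development.
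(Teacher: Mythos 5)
Your overall strategy --- reduce by symmetry to one inequality, then transfer the vanishing hypothesis from one transverse system to the other via the change-of-basis identity \eqref{leading to multiplicity invariance} --- is exactly the route the paper intends (it presents the lemma as ``a direct consequence'' of that identity), and your symmetry/minimality bookkeeping at either end is fine. However, your ``crucial structural observation'' is false. In the expansion of $(\overline{\omega_{k+1}}\cdot\nabla)^{\beta_{k+1}}\cdots(\overline{\omega_n}\cdot\nabla)^{\beta_n}p$ furnished by \eqref{leading to multiplicity invariance}, the constraint $|\alpha_i|=\beta_i=0$ does force $\alpha_i=0$ for $i\leq k$, but the remaining $\alpha_{k+1},\ldots,\alpha_n$ are arbitrary multi-indices in $\mathbb{N}^n$ with $|\alpha_j|=\beta_j$; they need not be supported on the last $n-k$ coordinates, so $\gamma=\alpha_{k+1}+\cdots+\alpha_n$ will in general have nonzero entries among its first $k$ coordinates. (Already for $n=2$, $k=1$, $\beta=(0,1)$: the decomposition with $\alpha_2=(1,0)$ contributes the term $(\omega_1\cdot\nabla)p$, a derivative \emph{parallel} to $P$, and its scalar coefficient is nonzero unless $\overline{\omega_2}$ happens to lie in ${\rm span}\{\omega_2\}$.) Thus mixed terms $(\omega_1\cdot\nabla)^{\gamma_1}\cdots(\omega_n\cdot\nabla)^{\gamma_n}p$ with some $\gamma_i\neq 0$, $i\leq k$, genuinely occur, and your standing hypothesis says nothing directly about their restrictions to $P$.

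The gap is repairable with material already in the appendix. For each such $\gamma$ one has $\gamma_{k+1}+\cdots+\gamma_n\leq|\gamma|=\beta_{k+1}+\cdots+\beta_n<b$, so by the standing hypothesis the transverse part $(\omega_{k+1}\cdot\nabla)^{\gamma_{k+1}}\cdots(\omega_n\cdot\nabla)^{\gamma_n}p_{|_{P_{x_0,\Omega}}}$ is the zero polynomial in $\mathbb{F}[t_1,\ldots,t_k]$; Lemma~\ref{lemma derivatives of restrictions} then gives
\begin{equation*}
(\omega_1\cdot\nabla)^{\gamma_1}\cdots(\omega_n\cdot\nabla)^{\gamma_n}p_{|_{P_{x_0,\Omega}}}=D^{(\gamma_1,\ldots,\gamma_k)}\bigl[(\omega_{k+1}\cdot\nabla)^{\gamma_{k+1}}\cdots(\omega_n\cdot\nabla)^{\gamma_n}p_{|_{P_{x_0,\Omega}}}\bigr]=0,
\end{equation*}
so every summand still restricts to zero on $P$ and your conclusion stands. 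This is precisely the Hasse-calculus substitute for the step in the Euclidean proof of Lemma~\ref{order invariance of minimal derivative_baby} (``further derivatives of this expression in directions parallel to $P$ will continue to return zero''); that step cannot be skipped by asserting that parallel derivatives never appear in the expansion.
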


\section*{Acknowledgments} 
The authors are grateful to the anonymous referees for suggestions that greatly improved the exposition of the paper.

\bibliographystyle{amsplain}


\begin{dajauthors}
\begin{authorinfo}[pgom]
  Anthony Carbery\\
  Professor\\
  University of Edinburgh\\
 School of Mathematics and Maxwell Institute for Mathematical Sciences\\
  Edinburgh, UK\\
  A.Carbery\imageat{}ed\imagedot{}ac\imagedot{}uk \\
  \url{https://www.maths.ed.ac.uk/school-of-mathematics/people/a-z?person=61}
\end{authorinfo}
\begin{authorinfo}[johan]
  Marina Iliopoulou\\
  Lecturer\\
  University of Kent\\
  School of Mathematics, Statistics and Actuarial Science \\
  Canterbury, UK\\
  m.iliopoulou\imageat{}kent\imagedot{}ac\imagedot{}uk \\
  \url{https://sites.google.com/view/marina-iliopoulou/home}
\end{authorinfo}

\end{dajauthors}

\end{document}